\documentclass[11pt]{article}

\usepackage[a4paper,margin=2.7cm]{geometry}
\usepackage{amsmath,amssymb,amsthm,mathtools}
\usepackage{aliascnt}
\usepackage{mathrsfs}
\usepackage{bm}
\usepackage{booktabs}
\usepackage{enumitem}
\usepackage{microtype}
\usepackage{hyperref}
\usepackage[nameinlink,capitalise]{cleveref}
\hypersetup{hidelinks}
\crefname{theorem}{Theorem}{Theorems}
\Crefname{theorem}{Theorem}{Theorems}
\crefname{lemma}{Lemma}{Lemmas}
\Crefname{lemma}{Lemma}{Lemmas}
\crefname{proposition}{Proposition}{Propositions}
\Crefname{proposition}{Proposition}{Propositions}
\crefname{corollary}{Corollary}{Corollaries}
\Crefname{corollary}{Corollary}{Corollaries}
\crefname{assumption}{Assumption}{Assumptions}
\Crefname{assumption}{Assumption}{Assumptions}
\crefname{definition}{Definition}{Definitions}
\Crefname{definition}{Definition}{Definitions}
\crefname{remark}{Remark}{Remarks}
\Crefname{remark}{Remark}{Remarks}
\crefname{example}{Example}{Examples}
\Crefname{example}{Example}{Examples}
\usepackage{tikz}
\usetikzlibrary{arrows.meta,positioning,calc,fit,shapes.geometric}
\usepackage{xcolor}

\newtheorem{theorem}{Theorem}[section]
\numberwithin{equation}{section}

\newaliascnt{proposition}{theorem}
\newtheorem{proposition}[proposition]{Proposition}
\aliascntresetthe{proposition}

\newaliascnt{lemma}{theorem}
\newtheorem{lemma}[lemma]{Lemma}
\aliascntresetthe{lemma}

\newaliascnt{corollary}{theorem}
\newtheorem{corollary}[corollary]{Corollary}
\aliascntresetthe{corollary}

\newaliascnt{assumption}{theorem}
\newtheorem{assumption}[assumption]{Assumption}
\aliascntresetthe{assumption}

\newaliascnt{remark}{theorem}
\newtheorem{remark}[remark]{Remark}
\aliascntresetthe{remark}

\newaliascnt{example}{theorem}

\aliascntresetthe{example}

\theoremstyle{definition}
\newaliascnt{definition}{theorem}
\newtheorem{definition}[definition]{Definition}
\aliascntresetthe{definition}

\newcommand{\E}{\mathbb E}
\newcommand{\R}{\mathbb R}
\newcommand{\dd}{\,\mathrm d}
\newcommand{\Id}{\operatorname{id}}

\newcommand{\argmax}{\operatorname*{arg\,max}}
\newcommand{\cX}{\mathcal X}
\newcommand{\cMix}{\mathscr M}

\newcommand{\cH}{\mathcal H}
\newcommand{\cT}{\mathcal T}
\newcommand{\cE}{\mathcal E}
\newcommand{\cP}{\mathcal P}
\newcommand{\whalpha}{\widehat\alpha}
\newcommand{\tgb}{\widetilde g}
\newcommand{\bR}{\mathbf R}
\newcommand{\bS}{\mathbf S}
\newcommand{\bM}{\mathbf M}
\newcommand{\bN}{\mathbf N}
\newcommand{\bK}{\mathbf K}
\newcommand{\Rem}{\mathrm{Rem}}
\newcommand{\norm}[1]{\left\lVert #1\right\rVert}
\newcommand{\abs}[1]{\left|#1\right|}

\tikzset{
  box/.style={draw, rounded corners, align=center, inner sep=6pt, text width=3.5cm},
  smallbox/.style={draw, line width=1pt, rounded corners, align=center, inner sep=5pt, text width=2.9cm},
  arrow/.style={-{Latex[length=2.4mm]}, line width=1pt},
  dashedarrow/.style={-{Latex[length=2.4mm]}, line width=1pt, dashed},
  groupbox/.style={draw, rounded corners, dashed, inner sep=8pt}
}

\title{Quantitative Convergence of Markov Equilibria\\
for Time-Inconsistent Controlled Diffusions}
\author{Sascha Desmettre\thanks{Institute of Financial Mathematics and Applied Number Theory, Johannes Kepler University Linz, Austria. Email: \href{mailto:sascha.desmettre@jku.at}{\texttt{sascha.desmettre@jku.at}}.}}
\date{October 17, 2026}

\begin{document}
\maketitle

\begin{abstract}
Time-inconsistent stochastic control admits a natural game-theoretic interpretation in which successive decision dates are represented by different selves.  In the Markovian framework of Bj\"ork--Khapko--Murgoci, the corresponding continuous-time equilibrium HJB system can be motivated by discrete subgame-perfect games, but the general discrete-to-continuous passage is not automatic.  We establish a quantitative convergence theorem for a smooth uniformly elliptic class with strongly concave Hamiltonians and interior equilibrium controls, allowing control in both drift and diffusion.  Using the exact sample-and-hold transition law on each time step, we construct, near a given verified continuous-time BKM weak equilibrium, a unique regular discrete Markov subgame-perfect equilibrium and prove first-order convergence of continuation objects in $C^{2+\beta}$ Hölder norms with mixed regularity and of policies and values uniformly.  The key stability mechanism is structural: the discrete first-order condition makes the frozen differential feedback correction nilpotent. A two-norm dyadic Schauder argument, local Gaussian comparison, and spatial localization turn this cancellation into mesh-uniform block stability despite the second-order coupling caused by controlled volatility. 
\end{abstract}

\medskip
\noindent\textbf{Keywords.}
Time-inconsistent control; Markov equilibrium; controlled diffusion; discrete-to-continuous convergence; Schauder estimates.

\smallskip
\noindent\textbf{MSC 2020.}
93E20 (primary); 91A15, 35K55, 60J60 (secondary).

\section{Introduction}
\label{sec:introduction}

\subsection{Motivation and the discrete-selves interpretation}

Time-inconsistent control replaces precommitment optimization by an
intrapersonal game: successive selves choose actions anticipating the
decisions of their successors. Following Strotz \cite{Strotz1955}, this
viewpoint has been developed for non-exponential discounting
\cite{EkelandLazrak2006,EkelandPirvu2008}, mean--variance preferences
\cite{BasakChabakauri2010,BjorkMurgociZhou2014}, and open-loop
linear--quadratic equilibria \cite{HuJinZhou2012}. We use the Markovian
framework of Bj\"ork--Murgoci \cite{BjorkMurgoci2014} and
Bj\"ork--Khapko--Murgoci (BKM) \cite{BKM2017}.

In discrete time, backward induction gives a natural subgame-perfect
recursion; general existence results are available under suitable
selection and compactness conditions \cite{BjorkMurgoci2014,BayraktarHan2023}.
In continuous time, equilibrium is defined through short deviations and
characterized by an extended HJB system. BKM motivate this system by a
discrete-to-continuous limit, explicitly leave the general justification
open, and note that the quadratic case was already known \cite{BKM2017}.

We study that limit for the criterion
\[
 J(t,x;\alpha)=\E_{t,x}[F(x,X_T^\alpha)]
                 +G\!\left(x,\E_{t,x}[X_T^\alpha]\right).
\]
At each grid date, a self chooses an action and holds it fixed over the next
interval. The game uses the exact diffusion transition, including controlled
volatility. Conditional on a verified smooth, uniformly elliptic continuous
equilibrium with a uniformly strongly concave Hamiltonian and interior controls,
we construct discrete equilibria and prove first-order convergence.
The limit is a BKM \emph{weak equilibrium} in the terminology of
Huang--Zhou \cite{HuangZhou2021}; convergence in a strong function-space
topology does not assert the distinct strong-equilibrium property.

\subsection{Relation to the literature and contributions}

Discrete-to-continuous equilibrium limits have important precedents.
Czichowsky \cite{Czichowsky2013} treats time-consistent mean--variance
selection in a semimartingale setting, including strong $L^2(M)$ convergence of the strategies. Bensoussan, Wong, Yam, and Yung
\cite{BensoussanEtAl2014} prove convergence for mean--variance portfolio
selection with wealth-dependent risk aversion and short-selling prohibition,
using the model's linear-in-wealth structure. Yong \cite{Yong2012} develops
partition games and equilibrium HJB equations for initial-time-dependent
criteria; his general nonlinear well-posedness and convergence analysis in Section~5 restricts the
diffusion to be control independent. Nguwi--Privault
\cite{NguwiPrivault2022} construct equilibria through weak convergence of
relaxed controls in an $n$-person approximation, also with uncontrolled
diffusion.

Related developments concern the preference specification itself.
Desmettre and Steffensen \cite{DesmettreSteffensen2023} study equilibrium
investment with random risk aversion using expected certainty equivalents.
De Gennaro Aquino, Desmettre, Havrylenko, and Steffensen develop
continuous-time equilibrium control for Kihlstrom--Mirman preferences
\cite{DeGennaroAquinoEtAl2024}, which separate risk and time preferences,
and for investment under dynamic preference uncertainty
\cite{DeGennaroAquinoEtAl2025}, where certainty equivalents are aggregated
over future preference states. These studies derive equilibrium systems
for their respective criteria. Here we study time-grid convergence for
the BKM criterion specified below; extending this analysis to those
preference frameworks requires additional arguments.

On the PDE side, Lei--Pun \cite{LeiPun2023,LeiPun2024} establish
well-posedness and Schauder stability for nonlocal fully nonlinear parabolic
systems, including the second-order coupling caused by controlled diffusion and nonlocality in both time and state \cite{LeiPun2026}.
Wang, Yu, Zhang, and Zhou \cite{WangYuZhangZhou2026} obtain equilibrium
existence by vanishing entropy regularization, with control in the drift only. Complementing these continuous-time results, we study mesh-uniform stability and quantitative convergence of discrete equilibria near a given verified regular continuous equilibrium.

The main result, \cref{thm:main}, gives $O(h)$ errors in $C^{2+\beta}$ Hölder
norms with mixed regularity for continuation quantities and in $C^0$ for policies and values. The technical core is the mesh-uniform block stability bound in \cref{prop:tube-stability}. The difficulty is that changes in the
continuation data alter the optimal action and, when volatility is
controlled, feed second-order spatial derivatives back into the recursion.
The discrete first-order condition yields a cancellation: after the
coefficients are frozen, the differential feedback correction vanishes
when applied twice. Consequently, the frozen principal evolution reduces to parabolic
propagation combined with a feedback factor that is linear in elapsed
time; see
\cref{prop:exact-linearization,prop:nilpotent}. A two-norm dyadic estimate, local Gaussian comparison, and spatial
localization extend this mechanism to the variable-coefficient
sample-and-hold recursion. An
$O(h^2)$ local residual and a first-exit argument then give the global rate.
The standard analytic inputs are parabolic Schauder smoothing
\cite{LadyzhenskayaEtAl1968,Krylov1996,Lunardi1995}, Duhamel formulas
\cite{Pazy1983}, and Littlewood--Paley estimates
\cite{BahouriCheminDanchin2011,Danchin2015}. The block estimate is specific
to the equilibrium structure; related discrete maximal-regularity questions
are studied in \cite{EvangelopoulosVeraar2026}.

\section{Continuous and discrete equilibrium problems}
\label{sec:problems}

\subsection{The continuous BKM problem}
\label{subsec:continuous-bkm}

\leavevmode Fix $0<T<\infty$. Let $(\Omega,\allowbreak\mathscr F,\allowbreak(\mathscr F_t)_{0\le t\le T},\allowbreak\mathbb P)$ support a $q$-dimensional Brownian motion $W$.  For an admissible control $a=(a_s)_{s\in[t,T]}$ with values in a compact set $A\subset\R^m$, consider the $\R^d$-valued diffusion with $\sigma(t,x,a)\in\R^{d\times q}$:
\begin{equation}
\label{eq:state-sde}
  \dd X_s=b(s,X_s,a_s)\,\dd s+\sigma(s,X_s,a_s)\,\dd W_s,
  \qquad X_t=x.
\end{equation}
Write $c=\sigma\sigma^\top$.
For a Markov control law $\alpha$, $a_s=\alpha(s,X_s)$, the objective is
\begin{equation*}
  J(t,x;\alpha)
  =\E_{t,x}\!\left[F(x,X_T^\alpha)\right]
   +G\!\left(x,\E_{t,x}[X_T^\alpha]\right).
\end{equation*}
The first argument records the current self's reference state; $G$ may also introduce nonlinear dependence on the conditional mean.

\begin{definition}[BKM weak equilibrium]
\label{def:bkm-equilibrium}
An admissible Markov control law $\whalpha$ is a \emph{BKM weak equilibrium} if, for every $(t,x)\in[0,T)\times\R^d$ and admissible local deviation $\nu$, the control
\begin{equation*}
  \alpha^{t,\varepsilon,\nu}(s,z)
  =
  \begin{cases}
    \nu(s,z), & t\le s<t+\varepsilon,\\
    \whalpha(s,z), & t+\varepsilon\le s\le T,
  \end{cases}
\end{equation*}
satisfies
\begin{equation*}
  \liminf_{\varepsilon\downarrow0}
  \frac{
  J(t,x;\whalpha)-J(t,x;\alpha^{t,\varepsilon,\nu})
  }{\varepsilon}
  \ge0.
\end{equation*}
Here $0<\varepsilon\le T-t$ is the duration of the deviation; $\nu$ is fixed as $\varepsilon\downarrow0$.
The term \emph{weak equilibrium} distinguishes this notion from stronger later notions.
\end{definition}

Let $\whalpha$ be a verified equilibrium control.  Define the continuation quantities
\begin{align}
\label{eq:def-f}
  f(t,z;y)&:=\E_{t,z}\!\left[F(y,X_T^{\whalpha})\right],\\
\label{eq:def-g}
  g(t,z)&:=\E_{t,z}\!\left[X_T^{\whalpha}\right].
\end{align}
The reference parameter $y$ is held fixed. Since $g(T,z)=z$, write
$g(t,z)=z+\tgb(t,z), \quad \tgb(T,z)=0.$
Under the standing assumptions, $\tgb$ is bounded. This decomposition separates the unbounded identity component from the bounded correction $\tgb$, to which the bounded H\"older-space estimates below apply.

For scalar functions, $D_z$ and $D_z^2$ denote the gradient and Hessian in $z$; vector-valued functions are differentiated componentwise. Here $\operatorname{tr}$ denotes the matrix trace.

For fixed $a\in A$, let
\begin{equation*}
  L_t^a\varphi(z)
  :=b(t,z,a)\cdot D_z\varphi(z)
    +\frac12 \operatorname{tr}\!\left(c(t,z,a)D_z^2\varphi(z)\right),
\end{equation*}
and set $\mathcal A^a=\partial_t+L_t^a$. The continuation equations along the equilibrium feedback are
\begin{align}
\label{eq:f-continuation-pde}
  \mathcal A^{\whalpha}f^y(t,z)&=0,
  & f^y(T,z)&=F(y,z),\\
\label{eq:g-continuation-pde}
  \mathcal A^{\whalpha}g(t,z)&=0,
  & g(T,z)&=z,
\end{align}
where $f^y(t,z):=f(t,z;y)$ and the generator acts with $y$ fixed.

The equilibrium value is
$V(t,x)=f(t,x;x)+G(x,g(t,x)).$
After this identity is substituted into the extended BKM HJB equation, the control-dependent part reduces to
\begin{equation}
\label{eq:reduced-H}
  \cH(t,x,a)
  =\left.\mathcal A^a f^x(t,z)\right|_{z=x}
   +D_2G(x,g(t,x))\cdot \mathcal A^a g(t,x).
\end{equation}
Here $D_2G(x,\mu):=\nabla_\mu G(x,\mu)$ differentiates in the second argument, with $x$ fixed.
For this equilibrium,
\begin{equation}
\label{eq:continuous-maximizer}
  \whalpha(t,x)=\argmax_{a\in A}\cH(t,x,a),
\end{equation}
and, because of \eqref{eq:f-continuation-pde}--\eqref{eq:g-continuation-pde},
$\cH(t,x,\whalpha(t,x))=0.$

\begin{remark}[The diagonal is not differentiated by the generator]
\label{rem:frozen-parameter}
The notation $\mathcal A^a f^x(t,x)$ means
$\left.\mathcal A_z^a f(t,z;y)\right|_{z=x,\,y=x}.$
Only $z$ is differentiated, not the diagonal $x\mapsto f(t,x;x)$.
\end{remark}

\subsection{Hölder spaces with mixed regularity}
\label{subsec:function-spaces}

Fix $0<\beta<1$. For a Banach space $E$ and a positive noninteger exponent $r=n+\alpha$, where $n\in\mathbb N_0$ and $0<\alpha<1$, let $C_b^r(\R^d;E)$ denote the space of bounded $E$-valued functions with bounded continuous derivatives through order $n$, whose derivatives of order $n$ are uniformly $\alpha$-Hölder continuous, measured in the norm of $E$. For scalar-valued functions, we omit $E=\R$ from the notation. We also write $C_b^0(\R^d;E)$ for bounded continuous functions with the supremum norm.

In particular, $C_b^s(\R_y^d)$ is the scalar-valued H\"older space in the reference variable $y$: writing $s=m+\gamma$, with $m\in\mathbb N_0$ and $0<\gamma<1$, its elements have bounded continuous derivatives through order $m$, and their derivatives of order $m$ are uniformly $\gamma$-H\"older continuous.

For positive noninteger $r,s$, we use \emph{Hölder spaces with mixed regularity},
\begin{equation}
\label{eq:anisotropic-F-rs}
  \cMix^{r,s}
  :=C_b^r\!\left(\R_z^d;C_b^s(\R_y^d)\right).
\end{equation}
Thus, for each state $z$, the function $y\mapsto f(z;y)$ belongs to $C_b^s(\R_y^d)$; its dependence on $z$ has regularity $r$, measured in that full function-space norm. The first index refers to the state variable $z$, and the second to the reference parameter $y$. We use the norm
\[
  \norm{f}_{\cMix^{r,s}}
  :=\bigl\|z\mapsto f(z;\cdot)\bigr\|_{C_b^r(\R_z^d;C_b^s(\R_y^d))}.
\]

For $U=(f,\tgb)$, use the product space and norm
\begin{align*}
  \cX^{2+\beta}
  &:=\cMix^{2+\beta,2+\beta}\times C_b^{2+\beta}(\R^d;\R^d),\\
  \norm{U}_{\cX^{2+\beta}}
  &:=\norm{f}_{\cMix^{2+\beta,2+\beta}}
   +\norm{\tgb}_{C_b^{2+\beta}}.
\end{align*}
For the lower state regularity used below, define analogously
\begin{equation*}
  \cX^\beta
  :=\cMix^{\beta,2+\beta}\times C_b^\beta(\R^d;\R^d),
  \qquad
  \norm{U}_{\cX^\beta}
  :=\norm{f}_{\cMix^{\beta,2+\beta}}+\norm{\tgb}_{C_b^\beta}.
\end{equation*}
Only the state regularity is reduced; regularity in the reference parameter remains $2+\beta$.

Transition operators act only in $z$. Joint $C_b^{2+\beta}$ regularity already suffices for diagonal evaluation.

\begin{lemma}[Diagonal evaluation]
\label{lem:diagonal-evaluation}
The map
\begin{equation*}
  \operatorname{Diag}:\cMix^{2+\beta,2+\beta}\longrightarrow C_b^{2+\beta}(\R^d),
  \qquad
  (\operatorname{Diag}f)(x):=f(x;x),
\end{equation*}
is bounded.  In particular, there exists $C<\infty$ such that
\begin{equation}
\label{eq:diag-bound}
  \norm{\operatorname{Diag}f}_{C_b^{2+\beta}}
  \le C\norm{f}_{\cMix^{2+\beta,2+\beta}}.
\end{equation}
\end{lemma}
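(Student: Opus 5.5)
Since $C_b^{2+\beta}(\R_z^d;C_b^{2+\beta}(\R_y^d))$ embeds into the joint space $C_b^{2+\beta}(\R^d_z\times\R^d_y)$, the plan is to establish that embedding and then compose with the smooth linear diagonal map $x\mapsto(x,x)$. First I identify the mixed derivatives. For $f\in\cMix^{2+\beta,2+\beta}$ the map $z\mapsto f(z;\cdot)$ is $C^2$ into $E=C_b^{2+\beta}(\R_y^d)$. Hence for $|j|\le 2$ the Fréchet derivative $D_z^j f(z;\cdot)$ lies in $E$, and its $y$-derivatives $D_y^kD_z^jf(z;y)$ exist for $|k|\le2$, with
\[
\sup_{z,y}\abs{D_y^kD_z^jf(z;y)}\le\norm{f}_{\cMix^{2+\beta,2+\beta}} .
\]
Point evaluation of $D_y^k$ at $y$ is a bounded linear functional on $E$, so it commutes with $D_z^j$. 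This shows that the classical partial derivatives $D_y^kD_z^jf$ exist and are continuous in $(z,y)$ jointly. Joint continuity follows from continuity in $z$ in the sup norm in $y$, combined with continuity in $y$ for each fixed $z$.

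Next I differentiate $\phi(x)=f(x;x)$ by the chain rule, using only the derivatives with $|j|+|k|\le2$:
\[
D^2\phi(x)=\bigl(D_z^2f+2D_zD_yf+D_y^2f\bigr)(x;x),
\]
where the cross term is symmetrized. Boundedness of $\phi$, $D\phi$ and $D^2\phi$ by $C\norm{f}$ is then immediate.

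It remains to bound the $\beta$-Hölder seminorm of each second-order term $h_{jk}(x)=D_z^jD_y^kf(x;x)$ with $|j|+|k|=2$. For $x,x'$ I split
\[
h_{jk}(x)-h_{jk}(x') = \bigl[D^j_zD^k_yf(x;x)-D^j_zD^k_yf(x';x)\bigr]+\bigl[D^j_zD^k_yf(x';x)-D^j_zD^k_yf(x';x')\bigr].
\]
I bound the first bracket by the $z$-regularity, measured in the sup norm over $y$. The second bracket I bound by the $y$-regularity at fixed $z=x'$. When $|x-x'|\ge1$, both brackets are trivially $\le 2\sup\abs{\cdot}$, so only $|x-x'|<1$ matters.

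The cases are as follows.
\begin{itemize}[leftmargin=*]
\item For $j=2$, $k=0$: the first bracket is $\le[D_z^2f]_{\beta;E}|x-x'|^\beta$. The second bracket is $\le\norm{D_z^2f(x';\cdot)}_{C^1_y}|x-x'|\le\norm{f}|x-x'|^\beta$.
\item For $j=0$, $k=2$: the first bracket is $\le\norm{D_zf}_{E}|x-x'|$ by the mean value theorem in $E$. The second bracket is $\le[D_y^2f(x';\cdot)]_\beta|x-x'|^\beta$.
\item For $j=k=1$: the first bracket is $\le\sup\norm{D_z^2f}_E|x-x'|$, and the second is $\le\sup\norm{D_zf}_E|x-x'|$.
\end{itemize}
In every case the bound is by $C\norm{f}_{\cMix^{2+\beta,2+\beta}}|x-x'|^\beta$ when $|x-x'|<1$.

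The only delicate step is the first one: justifying that the $E$-valued derivatives coincide with classical mixed partials and may be evaluated pointwise in $y$. I expect this to be straightforward via the bounded evaluation functionals $\psi\mapsto D^k\psi(y)$ on $E$. It also makes clear that the estimate uses far less regularity than is assumed, namely only total order $2+\beta$.
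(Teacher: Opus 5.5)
Your proof is correct and follows the same route as the paper. The paper's proof is the one-line remark that one applies the chain rule twice along $(z,y)=(x,x)$ and that the resulting mixed derivatives and their $\beta$-H\"older seminorms are controlled by the $\cMix^{2+\beta,2+\beta}$ norm; your identification of the mixed partials through bounded evaluation functionals on $E$, and your splitting of the H\"older differences through $(x';x)$, supply exactly the details it leaves implicit. One minor caveat concerns your closing remark. Your Lipschitz bounds in the brackets use third-order mixed derivatives such as $D_yD_z^2f$, which the mixed space does provide. So the claim that only total order $2+\beta$ is needed is true, as the paper also notes, but your particular argument does not show it.
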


\begin{proof}
Apply the chain rule twice along $(z,y)=(x,x)$; all resulting mixed derivatives and their $\beta$-H\"older seminorms are controlled by the defining norm.
\end{proof}

Use $\|u\|_{\mathcal C^s}:=\sup_{j\ge-1}2^{js}\|\Delta_j u\|_\infty$ for the inhomogeneous Besov space $\mathcal C^s:=B_{\infty,\infty}^s$. Here $\Delta_j$ are smooth dyadic Fourier blocks. For positive noninteger $s$, $\mathcal C^s=C_b^s$ with equivalent norms.

\subsection{Standing assumptions}
\label{subsec:assumptions}

The assumptions are deliberately strong: we isolate the convergence mechanism without optimizing regularity or boundary hypotheses.

\begin{assumption}[H1: control set]
\label{ass:H1}
The set $A\subset\R^m$ is nonempty, compact and convex, with $\operatorname{int}A\neq\varnothing$.  There exists an open convex neighborhood $A^\sharp\supset A$ on which all control derivatives used below are defined.
\end{assumption}

\begin{assumption}[H2: diffusion coefficients]
\label{ass:H2}
The coefficients
$b:[0,T]\times\R^d\times A^\sharp\to\R^d, \quad \sigma:[0,T]\times\R^d\times A^\sharp\to\R^{d\times q}$
are bounded and globally Lipschitz in the state variable, uniformly in $(t,a)$. Let $\beta\in(0,1)$ be the exponent fixed in Section~\ref{subsec:function-spaces}. The coefficients $b$ and $\sigma$ have continuous mixed derivatives
\[
  \partial_t^jD_x^\ell D_a^k b,
  \qquad \partial_t^jD_x^\ell D_a^k\sigma,
  \qquad 0\le j\le2,\quad 0\le\ell\le6,\quad 0\le k\le6,
\]
all uniformly bounded. For each $j$ and $k$ in these ranges, the derivatives with $\ell=6$ are uniformly $\beta$-Hölder continuous in the state variable $x$ and $\beta/2$-Hölder continuous in time. All bounds and Hölder constants are uniform in the remaining variables. These regularity orders are sufficient for the estimates below; no optimality is claimed. Six action derivatives are used for the scaled optimizer and linearization estimates in \cref{lem:scaled-tube}.
\end{assumption}

\begin{assumption}[H3: uniform ellipticity]
\label{ass:H3}
There exists $\lambda_c>0$ such that
$\xi^\top c(t,x,a)\xi\ge\lambda_c\abs{\xi}^2$
for all $(t,x,a,\xi)\in[0,T]\times\R^d\times A\times\R^d$.
\end{assumption}

\begin{assumption}[H4: payoff regularity]
\label{ass:H4}
The terminal function $F=F(y,z)$ satisfies
$(z,y)\longmapsto F(y,z)\in\cMix^{6+\beta,2+\beta}.$
The first index measures regularity in the state variable $z$, and the second in the reference parameter $y$.
The function $G=G(x,m)$ is bounded and sufficiently smooth with bounded mixed derivatives through all orders used below; the assumption
$G\in C_b^{8+\beta}(\R^d\times\R^d)$
is sufficient.
\end{assumption}

\begin{assumption}[H5: classical BKM solution]
\label{ass:H5}
There exists a classical BKM solution $(f,g,\whalpha)$ satisfying \eqref{eq:f-continuation-pde}--\eqref{eq:continuous-maximizer} and
$g=\Id+\tgb,$
with
\[
  f\in C^2\!\left([0,T];\cMix^{6+\beta,2+\beta}\right),
  \quad\text{and}\quad
  \tgb\in C^2\!\left([0,T];C_b^{6+\beta}(\R^d;\R^d)\right).
\]
All corresponding norms are uniformly bounded on $[0,T]$.
\end{assumption}

\begin{assumption}[H6: verification and admissibility]
\label{ass:H6}
The solution in \cref{ass:H5} satisfies the BKM verification hypotheses. Thus $\whalpha$ is an admissible BKM weak equilibrium as in \cref{def:bkm-equilibrium}, and the probabilistic representations \eqref{eq:def-f}--\eqref{eq:def-g} hold. The continuous equilibrium need not be unique.
\end{assumption}

\begin{assumption}[H7: uniform strong concavity]
\label{ass:H7}
The reduced Hamiltonian \eqref{eq:reduced-H} is twice continuously differentiable in $a$, and there exists $\lambda_H>0$ such that
$-D_{aa}^2\cH(t,x,a)\ge\lambda_H I_m$
for every $(t,x,a)\in[0,T]\times\R^d\times A$.
\end{assumption}

\begin{assumption}[H8: uniform interior margin]
\label{ass:H8}
There exists $\eta>0$ such that
$\operatorname{dist}(\whalpha(t,x),\partial A)\ge\eta$
for every $(t,x)\in[0,T]\times\R^d$.
\end{assumption}

\begin{assumption}[H9: uniform time grid]
\label{ass:H9}
For $N\in\mathbb N$ set
$h:=T/N, \quad t_i:=ih, \quad i=0,\dots,N.$
The main theorem concerns these uniform grids.
\end{assumption}

\subsection{Exact sample-and-hold discretization}
\label{subsec:discrete-problem}

For $a\in A$, let $P_{i,h}^a$ be the exact one-step transition operator of \eqref{eq:state-sde} with action $a$ held fixed throughout the step:
\begin{equation}
\label{eq:sample-hold-transition}
  P_{i,h}^a\varphi(x)
  :=\E\!\left[
    \varphi(X_{t_{i+1}})
    \mid X_{t_i}=x,\ a_s\equiv a\text{ on }[t_i,t_{i+1})
  \right].
\end{equation}
For a family $\varphi=\varphi(z;y)$, the operator acts only on the state variable:
$(\cP_{i,h}^a\varphi)(x;y) :=P_{i,h}^a[\varphi(\cdot;y)](x).$
Thus the time grid discretizes the \emph{game}, not the state equation: there is no Euler--Maruyama approximation in \eqref{eq:sample-hold-transition}.

Set the terminal continuation data to
\begin{align}
\label{eq:discrete-terminal-f}
  f_N^h(x;y)&:=F(y,x),\\
\label{eq:discrete-terminal-g}
  \tgb_N^h(x)&:=0.
\end{align}
Suppose $(f_{i+1}^h,\tgb_{i+1}^h)$ has already been constructed.  If player $i$ observes $X_{t_i}=x$, chooses $a\in A$, holds it on $[t_i,t_{i+1})$, and all future selves use their already constructed equilibrium strategies, then the current player's exact payoff is
\begin{equation}
\label{eq:discrete-Q}
  Q_{i,h}^h(x,a):={P}_{i,h}^a[f_{i+1}^h(\cdot;x)](x)
   +G\!\left(x,P_{i,h}^a[\Id+\tgb_{i+1}^h](x)\right).
\end{equation}
Under the standing assumptions, uniqueness of the maximizer at every backward step will be established for sufficiently small $h$ in \cref{thm:main}. Provided that the maximizer is unique, define
\begin{equation}
\label{eq:discrete-policy}
  \alpha_i^h(x):=\argmax_{a\in A}Q_{i,h}^h(x,a),
\end{equation}
and update
\begin{align}
\label{eq:discrete-f-update}
  f_i^h(x;y)
  &=P_{i,h}^{\alpha_i^h(x)}[f_{i+1}^h(\cdot;y)](x),\\
\label{eq:discrete-g-update}
  \tgb_i^h(x)
  &=P_{i,h}^{\alpha_i^h(x)}[\Id+\tgb_{i+1}^h](x)-x.
\end{align}
The associated conditional mean of the terminal state is
$g_i^h(x):=x+\tgb_i^h(x),$
and the discrete equilibrium value of player $i$ is
\begin{equation}
\label{eq:discrete-value}
  V_i^h(x):=f_i^h(x;x)+G(x,g_i^h(x)).
\end{equation}

\begin{definition}[Markov subgame-perfect equilibrium of the grid game]
\label{def:discrete-spe}
A sequence of Markov decision rules $(\alpha_0^h,\dots,\alpha_{N-1}^h)$ is a \emph{Markov subgame-perfect equilibrium} if, for every date $i$ and state $x$, $\alpha_i^h(x)$ maximizes player $i$'s payoff conditional on $X_{t_i}=x$ when all future selves $i+1,\dots,N-1$ use their prescribed rules.
\end{definition}

By the tower property, the conditional payoff in \cref{def:discrete-spe} is exactly \eqref{eq:discrete-Q}.  Hence the backward recursion \eqref{eq:discrete-policy}--\eqref{eq:discrete-g-update}, whenever well defined, is precisely the backward-induction recursion of the discrete intrapersonal game.

The state-dependent decision rule $\alpha_i^h(x)$ must be distinguished from the actually implemented continuous-time control.  Writing $X^h$ for the state process generated by the discrete policies, the sample-and-hold control on $[t_i,t_{i+1})$ is
$u_s^h=\alpha_i^h(X_{t_i}^h), \quad t_i\le s<t_{i+1}.$
It is generally different from the continuously updated feedback $\alpha_i^h(X_s^h)$.

\subsection{Main convergence theorem}
\label{subsec:main-theorem}

The construction and stability estimates below are local near a given verified continuous equilibrium. No uniqueness of the continuous BKM equilibrium is assumed. Once the grid equilibrium has been constructed, uniqueness among the Markov decision rules of \cref{def:discrete-spe} follows from backward induction and the unique global maximizer at every constructed step.

A grid equilibrium is called \emph{regular} if $(f_i^h,\tgb_i^h)\in\cX^{2+\beta}$ and $\alpha_i^h\in C_b^{2+\beta}(\R^d;\R^m)$ at all relevant grid times, for each fixed $h$.

\begin{theorem}[Quantitative convergence of discrete-time equilibria]
\label{thm:main}
Suppose \cref{ass:H1,ass:H2,ass:H3,ass:H4,ass:H5,ass:H6,ass:H7,ass:H8,ass:H9} hold. Then there exist constants $h_0>0$, $\rho>0$, and $C<\infty$ such that, for every uniform grid with $h=T/N<h_0$, the backward recursion \eqref{eq:discrete-terminal-f}--\eqref{eq:discrete-g-update} admits a unique regular Markov subgame-perfect equilibrium $(f_i^h,\tgb_i^h)_{i=0}^N$, $(\alpha_i^h)_{i=0}^{N-1}$ lying in a fixed neighborhood of the continuous solution, as expressed by
\begin{equation}
\label{eq:tube-condition}
  \max_{0\le i\le N}
  \left[
    \norm{f_i^h-f(t_i)}_{\cMix^{2+\beta,2+\beta}}
    +\norm{\tgb_i^h-\tgb(t_i)}_{C_b^{2+\beta}}
  \right]
  <\rho.
\end{equation}
The policies are unique among the Markov subgame-perfect equilibria of \cref{def:discrete-spe}.

Moreover,
\begin{align}
\label{eq:main-continuation-rate}
 &\max_{0\le i\le N}
  \left[
    \norm{f_i^h-f(t_i)}_{\cMix^{2+\beta,2+\beta}}
    +\norm{\tgb_i^h-\tgb(t_i)}_{C_b^{2+\beta}}
  \right]
  \le Ch.\\
\shortintertext{The equilibrium decision rules satisfy}
\label{eq:main-policy-rate}
 &\max_{0\le i<N}
  \norm{\alpha_i^h-\whalpha(t_i,\cdot)}_{C_b^0}
  \le Ch.\\
\shortintertext{Finally, with $V_i^h$ as in \eqref{eq:discrete-value},}
\label{eq:main-value-rate}
 &\max_{0\le i\le N}
  \norm{V_i^h-V(t_i,\cdot)}_{C_b^0}
  \le Ch.
\end{align}
The limit $\whalpha$ is the verified continuous-time BKM weak equilibrium of \cref{ass:H5,ass:H6}.
\end{theorem}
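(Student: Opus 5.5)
The plan is a bootstrap (continuation) argument in the tube \eqref{eq:tube-condition}. We compare the discrete recursion with the continuous solution sampled on the grid. The three ingredients are a local residual estimate, a well-posedness result for the one-step maximizer inside the tube, and a mesh-uniform stability bound for the linearized block recursion.

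\textbf{Step 1 (one-step map).} Fix $U=(f_{i+1},\tgb_{i+1})$ with $\norm{U-U(t_{i+1})}_{\cX^{2+\beta}}<\rho$. Expand $P^a_{i,h}\varphi=\varphi+hL^a_{t_i}\varphi+O(h^{1+\beta/2})$ in the relevant Hölder norms, using H2--H3 and Gaussian kernel bounds. This gives
\[
Q_{i,h}(x,a)=\text{const}(x)+h\,\cH_U(t_i,x,a)+o(h),
\]
where $\cH_U$ is the reduced Hamiltonian built from $U$. By H7 and small $\rho$, $a\mapsto h^{-1}(Q_{i,h}-\text{const})$ is uniformly strongly concave on $A$, uniformly in $h<h_0$; this is where the six action derivatives enter. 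By H8 its maximizer stays at distance $\ge\eta/2$ from $\partial A$. It is therefore unique and characterized by the first-order condition $D_aQ_{i,h}(x,\alpha)=0$. The implicit function theorem in $C_b^{2+\beta}$, combined with \cref{lem:diagonal-evaluation} to handle the diagonal $y=x$, gives $\alpha_i^h\in C_b^{2+\beta}$ depending smoothly on $U$. Global maximality over the compact set $A$ gives uniqueness among all Markov subgame-perfect rules, by backward induction. Hence the recursion defines a smooth map $U_{i+1}\mapsto U_i=\Phi_{i,h}(U_{i+1})$ on the tube.

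\textbf{Step 2 (consistency).} Insert the exact solution. Set $R_i:=U(t_i)-\Phi_{i,h}(U(t_{i+1}))$. By H5 (time $C^2$ with spatial $6+\beta$ regularity), the PDEs \eqref{eq:f-continuation-pde}--\eqref{eq:g-continuation-pde} and $\cH(\whalpha)=0$, we aim to show $\norm{R_i}_{\cX^{2+\beta}}\le Ch^2$. The action $\alpha$ solving the discrete first-order condition for the exact data differs from $\whalpha(t_i)$ by $O(h)$. Because of the first-order condition, this perturbation costs only $O(h\cdot h)$ in the update. The excess spatial regularity absorbs the Taylor remainders in $C^{2+\beta}$.

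\textbf{Step 3 (stability, the main obstacle).} Write $E_i=U_i^h-U(t_i)$. Then
\[
E_i=D\Phi_{i,h}E_{i+1}+R_i+\text{quadratic terms}.
\]
We need
\[
\norm{\textstyle\prod_{k=i}^{j-1}D\Phi_{k,h}}_{\cX^{2+\beta}\to\cX^{2+\beta}}\le C
\]
uniformly in $h$ and $i<j$. The linearization $D\Phi$ consists of the transition $P^{\alpha}$ plus a feedback term in which $\delta\alpha$, itself depending on $D^2_z\delta f$ when volatility is controlled, multiplies $\partial_aP^a$. The latter is $O(h)$ in size but loses two derivatives, so a naive Grönwall bound fails. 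I would first invoke \cref{prop:exact-linearization,prop:nilpotent}: with coefficients frozen, the feedback correction squares to zero, so an $N$-fold product is parabolic propagation plus a single feedback term of size linear in elapsed time. Next, \cref{prop:tube-stability} transfers this to variable coefficients via dyadic Schauder smoothing in two norms ($\cX^\beta$ and $\cX^{2+\beta}$), Gaussian comparison and localization. This gives uniform block bounds.

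\textbf{Step 4 (closing).} A first-exit argument finishes the proof. Let $i^*$ be the first backward index at which the tube would be left. Summing errors with the uniform block bound gives
\[
\norm{E_i}\le C\sum_k\norm{R_k}+C\rho\max_k\norm{E_k}\le CTh+C\rho\max\norm{E},
\]
so $\max\norm{E}\le Ch<\rho/2$ for small $\rho,h$. Hence the tube is never left, which yields existence, uniqueness in the tube and \eqref{eq:main-continuation-rate}. The bound \eqref{eq:main-policy-rate} follows from the Step 1 implicit function estimate together with the $O(h)$ first-order condition shift. The bound \eqref{eq:main-value-rate} follows from \eqref{eq:discrete-value}, \cref{lem:diagonal-evaluation} and Lipschitz continuity of $G$.
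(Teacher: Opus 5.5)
Your architecture matches the paper's: a well-defined one-step map in a tube, an $O(h^2)$ residual, mesh-uniform block stability, and a first-exit argument. But the closing inequality in Step~4 does not follow from the stability statement you invoke. You linearize at the reference data, so your error equation contains the nonlinear remainders
\[
 q_k=\cT_{k,h}(\bar U_{k+1}+E_{k+1})-\cT_{k,h}(\bar U_{k+1})-D\cT_{k,h}(\bar U_{k+1})E_{k+1}.
\]
These suffer from exactly the two-derivative loss you identified for the linear feedback term. The available estimate (\cref{lem:scaled-tube}) bounds $q_k$ only by $C\norm{E_{k+1}}^2$ in $\cX^{2+\beta}$, with \emph{no} factor $h$; the factor $h$ appears only in the weak norm $\cX^\beta$. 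The reason is that $h\bR\bM^{-1}\bS$ is of order one on $\cX^{2+\beta}$. If you propagate the $N=T/h$ remainders using an $\cX^{2+\beta}$ operator bound on products of $D\Phi_{k,h}$, you get $C\rho\,Th^{-1}\max_k\norm{E_k}$, not $C\rho\max_k\norm{E_k}$.

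A bootstrap $\norm{E}\le Mh$ does not rescue this. The summed remainders are then of size $CTM^2h$, while the residual already forces $M\ge CC_\tau T$. So the bootstrap closes only under a smallness condition relating $T$ and the fixed constants, which is not available. Using the weak-norm factor $h$ through operator-norm smoothing of the propagator would still lose a logarithm. Summing perturbations that are small only in the weak norm needs the frequency-wise argument of \cref{lem:micro-duhamel}.

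The missing idea is never to split off the nonlinearity. The paper writes the exact recursion $e_i=A_{i,h}e_{i+1}+r_i^h$, with the path-averaged linearization $A_{i,h}$ of \eqref{eq:path-averaged-A}. \cref{prop:tube-stability} bounds products of these $A_{i,h}$ themselves, not of the reference linearizations. Inside its proof, the neighborhood deviation $A_{i,h}-D\cT_{i,h}(\bar U_{i+1})$ is placed in the class $K^{(2)}$ of \eqref{eq:two-norm-classes} with $b_2\lesssim C_r\rho$. Its accumulated effect is bounded independently of $N$ by the two-norm dyadic lemma and absorbed together with the freezing errors. No quadratic terms then remain, and the first-exit argument closes block by block via $C_k=C_B(C_{k+1}+C_\tau\Delta_0)$, with the number of blocks bounded independently of $h$.

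So either use \cref{prop:tube-stability} as stated, for the path-averaged operators, or feed your $q_k$ into the two-norm dyadic estimate rather than summing them with an $\cX^{2+\beta}$ operator bound. The remaining pieces are consistent with the paper: interiority and uniqueness of the maximizer, the $O(h^2)$ residual, the policy rate via the intermediate policy $\alpha_{i,h}[\bar U_{i+1}]$, and the value rate via diagonal evaluation.
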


\begin{remark}[Interpretation of the convergence theorem]
\label{rem:main-interpretation}
The bound \eqref{eq:tube-condition} places the discrete continuation functions in a fixed neighborhood of the continuous solution at every grid time, with radius $\rho$ independent of $h$. This neighborhood is measured in the displayed H\"older norms, so it controls spatial derivatives as well as function values. Remaining in this neighborhood is a conclusion of the theorem, not an additional assumption on a pre-existing discrete equilibrium.
Estimate \eqref{eq:main-continuation-rate} strengthens this fixed-neighborhood bound to an $O(h)$ error for the continuation functions in those norms; \eqref{eq:main-policy-rate} and \eqref{eq:main-value-rate} give $O(h)$ errors for the decision rules and equilibrium values in the uniform norm. All bounds are uniform over grid times and states, with $C$ independent of $h$.
The limit $\whalpha$ is the continuous-time BKM weak equilibrium assumed in \cref{ass:H5,ass:H6}. The theorem constructs discrete equilibria converging to this given equilibrium; it does not establish continuous-time existence.
\end{remark}

\section{Control sensitivities and exact linearization}
\label{sec:sensitivities}

To prove \cref{thm:main}, we first need to understand how one backward equilibrium step responds to a change in the continuation functions. Since the control affects the diffusion, this response involves second spatial derivatives. We estimate the control sensitivities and derive the exact linearization of the equilibrium update. At a frozen reference point, the first-order optimality condition yields a nilpotence identity that prevents repeated feedback corrections from producing arbitrarily high differential orders. This identity is the starting point for the stability estimates in \cref{sec:block-stability}.


\subsection{Normalized control sensitivities}
\label{subsec:normalized-sensitivities}

For $0\le t\le t+h\le T$ and constant $a\in A$, let $P_{t,t+h}^a$ be the exact transition evolution of $L_s^a$. For $j=1,\ldots,6$, write
$B_j(s,a):=D_a^jL_s^a.$
For fixed control directions, each $B_j(s,a)$ is a spatial differential operator of order at most two. Under \cref{ass:H2}, for every Hölder exponent used below,
$\norm{B_j(s,a)\varphi}_{C_b^\gamma} \le C\norm{\varphi}_{C_b^{\gamma+2}}, \quad j=1,\ldots,6,$
uniformly in $(s,a)$.

For each fixed action $a$, uniform ellipticity makes the transition operator $P_{s,u}^a$ spatially smoothing: averaging over the diffusion increases regularity in the initial state. We use the following estimate to quantify this gain and its dependence on the elapsed time.
\begin{equation}
\label{eq:parabolic-smoothing}
  \norm{P_{s,u}^a\varphi}_{C_b^{\gamma+\vartheta}}
  \le C(u-s)^{-\vartheta/2}\norm{\varphi}_{C_b^\gamma},
  \qquad 0\le\vartheta<2,
\end{equation}
with constants uniform in $a\in A$ and $0\le s<u\le T$. The case $\vartheta=0$ gives boundedness on $C_b^\gamma$; positive $\vartheta$ quantifies the gain in spatial regularity.
Apply \cite[Cor.~6.1.8(i), (6.1.15), (6.1.18)]{Lunardi1995} on $X=C_b^\gamma$ with common domain $C_b^{\gamma+2}$ and interpolate. Higher spatial orders follow from differentiated Schauder estimates. Assumptions~\ref{ass:H2}--\ref{ass:H3} make the constants uniform; time reversal gives the backward evolution. No density in the full H\"older norm is required.

This smoothing compensates for the spatial derivatives introduced by $D_a^jL_s^a$ in the Duhamel representations below. Gains strictly below two derivatives yield integrable time singularities; the proof distributes the required smoothing over the intervening time intervals.

\begin{lemma}[Normalized control sensitivities]
\label{lem:S}
Let $r\in\{0,2\}$. There exist $h_0>0$ and $C<\infty$ such that, for every $0<h\le h_0$, $t+h\le T$, $a\in A$, and $k=1,\ldots,6$,
\begin{equation}
\label{eq:lemma-S-main}
  \frac1h
  \norm{D_a^kP_{t,t+h}^a\varphi}_{C_b^{r+\beta}}
  \le C\norm{\varphi}_{C_b^{r+2+\beta}}.
\end{equation}
Here the left-hand side denotes the operator norm in the $k$ control directions. Moreover, for $j=1,\ldots,5$ and $a,\widetilde a\in A$,
\begin{equation}
\label{eq:lemma-S-lipschitz-a}
  \frac1h
  \norm{D_a^jP_{t,t+h}^a-D_a^jP_{t,t+h}^{\widetilde a}}_
  {\mathcal L^j(\R^m;\mathcal L(C_b^{r+2+\beta},C_b^{r+\beta}))}
  \le C\abs{a-\widetilde a}.
\end{equation}
\end{lemma}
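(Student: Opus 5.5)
We differentiate the evolution $P^a_{t,t+h}$ in the action through Duhamel formulas and absorb the two spatial derivatives produced by each $B_j$ with the smoothing estimate \eqref{eq:parabolic-smoothing}, integrated over the short interval $[t,t+h]$. For $k=1$ the Duhamel formula gives
\begin{equation*}
  D_aP^a_{t,t+h}\varphi=\int_t^{t+h}P^a_{t,s}\,B_1(s,a)\,P^a_{s,t+h}\varphi\dd s .
\end{equation*}
We justify this by differentiating the backward equation $\partial_s u+L^a_s u=0$, $u(t+h)=\varphi$, in $a$. The derivative $w=D_au$ solves $\partial_s w+L^a_s w=-B_1(s,a)u$ with zero terminal data, and variation of constants applies on $C_b^{r+\beta}$, using the Lunardi framework quoted after \eqref{eq:parabolic-smoothing}. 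Applying the $\vartheta=0$ case of \eqref{eq:parabolic-smoothing} to both evolutions, and the mapping bound $\norm{B_1\psi}_{C_b^{r+\beta}}\le C\norm{\psi}_{C_b^{r+2+\beta}}$, we bound the integrand by $C\norm{\varphi}_{C_b^{r+2+\beta}}$ uniformly in $s$. Integrating over an interval of length $h$ gives \eqref{eq:lemma-S-main} for $k=1$. No singular smoothing is needed, because $\varphi$ already has the two extra derivatives.

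For $k\ge2$ we differentiate the Duhamel formula repeatedly, or equivalently use the iterated expansion. Then $D_a^kP^a_{t,t+h}$ is a finite sum of ordered time integrals over simplices $t<s_1<\dots<s_\ell<t+h$, $1\le\ell\le k$, of products of the form
\begin{equation*}
  P^a_{t,s_1}B_{j_1}(s_1,a)P^a_{s_1,s_2}\cdots B_{j_\ell}(s_\ell,a)P^a_{s_\ell,t+h}\varphi,
  \qquad j_1+\dots+j_\ell=k .
\end{equation*}
Each term with $\ell\ge2$ contains several second-order operators but only two spare derivatives on $\varphi$. We therefore distribute the smoothing. We apply \eqref{eq:parabolic-smoothing} with gain $\vartheta_m<2$ on the intermediate evolution $P^a_{s_m,s_{m+1}}$, so that each $B_{j_m}$ is compensated, and keep the data at level $r+2+\beta$. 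For example, with $\ell=2$ and $\vartheta=2-\epsilon$, we bound the innermost $B_{j_2}P\varphi$ in $C_b^{r+\beta}$, and the evolution $P^a_{s_1,s_2}$ lifts this to $C_b^{r+2-\epsilon+\beta}$ at cost $(s_2-s_1)^{-1+\epsilon/2}$.

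This still leaves $B_{j_1}$ short by $\epsilon$ derivatives. We cure this by working in the Besov scale $\mathcal C^s$, where all estimates hold for noninteger and integer indices alike, and by lowering the output regularity slightly before smoothing back up through $P^a_{t,s_1}$. The net singularities $(s_{m+1}-s_m)^{-1+\epsilon/2}$ are integrable. The simplex integral of order $\ell$ is then $O(h^{\ell\epsilon/2+\dots})$, so every term is at least $O(h)$, and the terms with $\ell\ge2$ are in fact $o(h)$. The $r=2$ case requires smoothing in $C_b^{2+\beta}$-based spaces, which the quoted differentiated Schauder estimates provide. Uniformity in $a$ and $h\le h_0$ follows from \cref{ass:H2,ass:H3}.

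\textbf{Main obstacle.} The hardest step is the bookkeeping of exponents in the multi-$B$ terms. Each $B_j$ costs exactly two derivatives, while \eqref{eq:parabolic-smoothing} allows only gains $\vartheta<2$, and the scheme must close with output $C_b^{r+\beta}$ and input $C_b^{r+2+\beta}$. If the Besov interpolation does not close directly, the first remedy is to write $P^a_{s_1,s_2}=P^a_{s_1,\bar s}P^a_{\bar s,s_2}$ at a midpoint $\bar s$ and split the required gain of two between the two halves. The fallback is the maximal-regularity estimate for $w$, which gives $\norm{w}_{C_b^{r+2+\beta}}\le C\sup\norm{B_j u}_{C_b^{r+\beta}}$ without any loss. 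Once the derivatives of all orders are controlled this way, the Lipschitz bound \eqref{eq:lemma-S-lipschitz-a} follows from the mean value theorem: $D_a^jP^a-D_a^jP^{\widetilde a}=\int_0^1D_a^{j+1}P^{\widetilde a+\tau(a-\widetilde a)}(a-\widetilde a)\dd\tau$. This is why $j\le5$ suffices, since it uses \eqref{eq:lemma-S-main} with $k=j+1\le6$. The path stays in $A$ because $A$ is convex, and the ellipticity bound extends to the segment.
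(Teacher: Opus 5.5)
Your main line is the paper's proof: the Duhamel expansion of $D_a^kP_{t,t+h}^a$ into time-ordered insertion integrals, recovery of $2-\epsilon$ derivatives across each interior gap, recovery of the accumulated $(\ell-1)\epsilon$ deficit across the leftmost gap $(t,s_1)$, and the Lipschitz bound from $D_a^{j+1}$ integrated along a segment in the convex set $A$. Your exponent count is off, however. The leftmost smoothing costs $(s_1-t)^{-(\ell-1)\epsilon/2}$, so the singular exponents sum to exactly $\ell-1$, and the substitution $s_j=t+hu_j$ gives $h^{\ell}h^{-(\ell-1)}=h$. The terms with $\ell\ge2$ are therefore of exact order $h$, not $o(h)$. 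For example, with frozen coefficients, $L^a=L+a\Lambda$ and $\Lambda$ of second order (controlled volatility), the $\ell=2$ term is $h^2\Lambda^2e^{hL}$, whose norm from $C_b^{r+2+\beta}$ to $C_b^{r+\beta}$ is comparable to $h$. This is harmless, since the lemma only asserts $O(h)$. Your midpoint remedy would not help: splitting one gap into two halves with total gain two still produces the nonintegrable factor $(s_2-s_1)^{-1}$, which is exactly what the $2-\epsilon$ allocation avoids. Finally, differentiating the backward equation in $a$ for a general $\varphi\in C_b^{r+2+\beta}$ presupposes differentiability in $a$, and smooth functions are not dense in these H\"older norms. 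The paper therefore first proves the expansion for mollified data, which converge in $C_b^{r+2+\beta'}$ for $\beta'<\beta$, and then passes to the limit.
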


Lemma~\ref{lem:S} shows that control derivatives of a short-step transition are $O(h)$, even when the control affects volatility. Dividing by $h$ therefore gives uniformly bounded quantities for the scaled optimality condition and the linearization of the equilibrium update, which are the building blocks for the stability analysis below. Up to six control derivatives require only two additional spatial derivatives of the continuation function. The Lipschitz estimate is stated for $j\le5$ because its proof uses the derivative of order $j+1$, controlled here only up to order six.

The factor $h^{-1}$ is uniform even when the control enters the diffusion.
The proof in \cref{app:sensitivity} keeps the first six control derivatives
under their Duhamel integrals and distributes the spatial smoothing over
the time gaps in the ordered integrals.

\begin{corollary}[Auxiliary parameter families]
\label{cor:aux-param}
Let $\varphi=\varphi(x;y)$ belong to
$C_b^{r+2+\beta}(\R_x^d;C_b^{2+\beta}(\R_y^d)).$
Since $P_{t,s}^a$ acts only on the state variable,
$D_y^\gamma P_{t,s}^a\varphi =P_{t,s}^aD_y^\gamma\varphi.$
Consequently, all estimates of \cref{lem:S} remain valid with the scalar H\"older norms replaced by the mixed H\"older norms $\norm{\cdot}_{\cMix^{r+2+\beta,2+\beta}}$ on the input space and $\norm{\cdot}_{\cMix^{r+\beta,2+\beta}}$ on the output space, as defined in \cref{subsec:function-spaces}. Thus the estimates require two additional derivatives in the state variable, while preserving regularity in the reference parameter. In particular they apply to the full continuation family $f(\cdot; y)$.
\end{corollary}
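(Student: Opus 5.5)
The plan is to reduce the corollary to \cref{lem:S}, applied pointwise in the reference parameter, and then to reassemble the mixed H\"older norm.

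First, the commutation identity. For fixed $a$ and $y$, $P_{t,s}^a[\varphi(\cdot;y)](x)$ is the integral of $\varphi(\cdot;y)$ against the transition kernel $p^a_{t,s}(x,\dd z)$, and this kernel does not depend on $y$. We then differentiate in $y$ under the integral. This is justified by dominated convergence, since $D_y^\gamma\varphi(z;y)$ is bounded and continuous uniformly in $z$ for $\abs{\gamma}\le2$ by membership in $C_b^{r+2+\beta}(\R_x^d;C_b^{2+\beta}(\R_y^d))$. Because $a\mapsto p^a$ is independent of $y$, the same argument shows that $D_a^k$ commutes with $D_y^\gamma$.

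Second, vector-valued extension. The cleanest route is to note that the proof of \cref{lem:S} uses only linear operations. These are the Duhamel representations, the operators $B_j(s,a)$, and the smoothing \eqref{eq:parabolic-smoothing}, all acting in the state variable. Each of them applies verbatim to $E$-valued functions with $E=C_b^{2+\beta}(\R_y^d)$, since the Schauder and semigroup estimates of \cite{Lunardi1995} hold on $C_b^\gamma(\R^d;E)$ for any Banach space $E$ with the same constants.

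If I prefer to avoid rerunning that proof, I would instead work coordinatewise. Apply \eqref{eq:lemma-S-main} to $D_y^\gamma\varphi(\cdot;y)$ for each $\abs{\gamma}\le2$ and fixed $y$. Apply it also to the difference quotients $\bigl(D_y^\gamma\varphi(\cdot;y)-D_y^\gamma\varphi(\cdot;y')\bigr)/\abs{y-y'}^\beta$ for $\abs{\gamma}=2$. By linearity and the commutation identity, this gives uniform control of
\[
  x\mapsto D_a^kP^a_{t,t+h}\varphi(x;\cdot)
\]
in the iterated norm $\sup_y\norm{\cdot}_{C_b^{r+\beta}(\R_x^d)}$ of $y$-H\"older quantities. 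The bound is by $Ch\,\norm{\varphi}_{\cMix^{r+2+\beta,2+\beta}}$, since each input on the right is bounded by that norm.

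The main obstacle is the last point of this second route. The norm of $\cMix^{r+\beta,2+\beta}$ measures the $x$-H\"older seminorm in the $E$-norm. This is a supremum over $x,x'$ of a supremum over $y$, rather than a supremum over $y$ of scalar seminorms. The coordinatewise bounds above give only the latter, with the suprema taken in the wrong order. For this reason I will commit to the vector-valued route. Concretely, I will check that the kernel and Duhamel estimates underlying \cref{lem:S} are estimates of linear operators on $C_b^\gamma(\R^d;E)$. Bochner integrals of $E$-valued functions satisfy the same bounds, because the operators act through $x$-convolution-type kernels with scalar weights.

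The Lipschitz estimate \eqref{eq:lemma-S-lipschitz-a} transfers identically by the same argument. Finally, taking $\varphi=f(t,\cdot\,;\cdot)$ and using \cref{ass:H5} gives the statement for the continuation family.
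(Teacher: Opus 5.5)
Your committed vector-valued route is correct and is essentially the paper's argument: the transition kernels and control insertions act only in $x$ and commute with $y$-derivatives and $y$-difference quotients. Their estimates then pass to the range $E=C_b^{2+\beta}(\R_y^d)$ by the integral (triangle) inequality for the $E$-norm, after which the proof of \cref{lem:S} runs unchanged.

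The ``wrong order of suprema'' obstacle you raise against the coordinatewise route is not genuine, since iterated suprema commute. Applying \cref{lem:S} to each evaluation functional $v\mapsto D^\gamma v(y)$ and each $y$-difference-quotient functional of $\varphi$ already recovers the $\cMix$ norms up to a constant. When $r=2$ this also needs a routine Cauchy argument for strong $x$-differentiability in $E$. That route is in fact a clean way to justify the vector-valued extension you assert.
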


\begin{proof}
All transition kernels and control insertions act only in $x$ and commute
with $y$-derivatives and $y$-difference quotients. Their kernel estimates extend
to $E=C_b^{2+\beta}(\R_y^d)$ by the triangle inequality for the $E$-norm.
The proof of \cref{lem:S} therefore applies with this fixed Banach range space.
\end{proof}

To apply these estimates to the second continuation component, we must also treat the unbounded identity part of $g=\Id+\tgb$. The following corollary provides the required bounds.

\begin{corollary}[Affine data]
\label{cor:affine-data}
Let $\iota(x)=x$. For $k=1,\ldots,6$ and $r\in\{0,2\}$,
\begin{equation}
\label{eq:affine-sensitivity}
  \frac1h
  \norm{D_a^kP_{t,t+h}^a\iota}_{C_b^{r+\beta}}
  \le C.
\end{equation}
Hence, for $g=\iota+\widetilde g$ with $\widetilde g\in C_b^{r+2+\beta}$,
\begin{equation}
\label{eq:g-sensitivity}
  \frac1h
  \norm{D_a^kP_{t,t+h}^ag}_{C_b^{r+\beta}}
  \le C\left(1+\norm{\widetilde g}_{C_b^{r+2+\beta}}\right).
\end{equation}
\end{corollary}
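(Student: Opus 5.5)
The plan is to reduce \eqref{eq:affine-sensitivity} to \cref{lem:S} by localizing the unbounded function $\iota$, and then to obtain \eqref{eq:g-sensitivity} by linearity. The key observation is that control derivatives of $P^a_{t,t+h}\iota$ only see the generator applied to $\iota$, and $L^a_s\iota(z)=b(s,z,a)$ is bounded and smooth. Concretely, the forward Kolmogorov/Dynkin identity gives
\[
  P^a_{t,t+h}\iota(x)=x+\int_t^{t+h}P^a_{t,s}\bigl[b(s,\cdot,a)\bigr](x)\dd s .
\]
This holds because $b$ is bounded, so $X$ has finite first moments and the stochastic integral is a true martingale. The identity term $x$ does not depend on $a$. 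Therefore, for $k\ge1$,
\[
  D_a^kP^a_{t,t+h}\iota=\int_t^{t+h}D_a^k\Bigl(P^a_{t,s}\bigl[b(s,\cdot,a)\bigr]\Bigr)\dd s .
\]

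Next I expand the integrand by the Leibniz rule. It becomes a sum of terms $D_a^{j}P^a_{t,s}\,[D_a^{k-j}b(s,\cdot,a)]$ with $0\le j\le k$. Each function $D_a^{k-j}b(s,\cdot,a)$ lies in $C_b^{r+2+\beta}$ uniformly, by \cref{ass:H2}: six $x$-derivatives with $\beta$-Hölder top order and $k-j\le6$ action derivatives, and $r+2+\beta\le 4+\beta<6$.

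For $j=0$, the smoothing estimate \eqref{eq:parabolic-smoothing} with $\vartheta=0$ gives a bound $C$ in $C_b^{r+\beta}$. For $j\ge1$, \cref{lem:S} applied on the interval $[t,s]$ of length $s-t\le h$ gives $\norm{D_a^jP^a_{t,s}\psi}_{C_b^{r+\beta}}\le C(s-t)\norm{\psi}_{C_b^{r+2+\beta}}\le C$. The lemma is stated for step length $h$, but its constants are uniform in the step length up to $h_0$, so it applies to any subinterval. Integrating over $s\in[t,t+h]$ yields $\norm{D_a^kP^a_{t,t+h}\iota}_{C_b^{r+\beta}}\le Ch$, which is \eqref{eq:affine-sensitivity}. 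The one point needing care is differentiating under the $\dd s$-integral and commuting $D_a$ with the evolution. The same Duhamel representation used in the proof of \cref{lem:S} justifies this, since the integrand's $a$-derivatives are continuous in $C_b^{r+\beta}$ uniformly in $s$.

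Finally, \eqref{eq:g-sensitivity} follows from linearity, $D_a^kP^a_{t,t+h}g=D_a^kP^a_{t,t+h}\iota+D_a^kP^a_{t,t+h}\tgb$. The first term is bounded by \eqref{eq:affine-sensitivity}. The second is bounded by \eqref{eq:lemma-S-main} applied componentwise to $\tgb\in C_b^{r+2+\beta}(\R^d;\R^d)$.

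The main obstacle is only the bookkeeping that justifies the Dynkin identity and the interchange of derivatives for unbounded data. I would handle it by truncation: replace $\iota$ by $\iota\chi_R$ with a smooth cutoff $\chi_R$, apply the argument to the bounded data, and let $R\to\infty$ using Gaussian-type moment bounds. The limit is locally uniform, and the uniform $C_b^{r+\beta}$ bounds pass to it.
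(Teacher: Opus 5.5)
Your proposal is correct and is essentially the paper's proof. Dynkin's formula gives $P^a_{t,t+h}\iota-\iota=\int_t^{t+h}P^a_{t,s}b(s,\cdot,a)\dd s$; differentiating in $a$ pairs control derivatives of $b$ with transition sensitivities from \cref{lem:S} (or \eqref{eq:parabolic-smoothing} for the undifferentiated transition) on an interval of length $h$, and \eqref{eq:g-sensitivity} follows by linearity — your closing truncation argument is unnecessary, since the Dynkin identity already places all $a$-dependence on the bounded smooth data $b(s,\cdot,a)$.
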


\begin{proof}
Since $L_s^a\iota=b(s,\cdot,a)$, Dynkin's formula yields
\begin{equation}
\label{eq:identity-dynkin}
  P_{t,t+h}^a\iota-\iota
  =\int_t^{t+h}P_{t,s}^ab(s,\cdot,a)\,\dd s.
\end{equation}
Differentiate \eqref{eq:identity-dynkin} up to six times in $a$. Every term contains an integral over an interval of length $h$, control derivatives of $b$, and transition sensitivities covered by \cref{lem:S}. This proves \eqref{eq:affine-sensitivity}; \eqref{eq:g-sensitivity} follows by linearity and \cref{lem:S}.
\end{proof}

\subsection{Discrete one-step objects and local stability}
\label{subsec:one-step-objects}

To compare the discrete recursion with the continuous solution, we must apply the same one-step update to different continuation functions. We therefore regard the continuation pair $U$ as an input variable: it may represent the exact continuous data, the computed discrete data, or an intermediate pair between them. This formulation lets us differentiate the update with respect to $U$ and estimate how continuation errors affect the preceding step. For continuation data $U=(f,\widetilde g)\in\cX^{2+\beta}$, define
$\Phi_{t,h}(U,a) :=\left( P_{t,t+h}^af, P_{t,t+h}^a[\Id+\widetilde g]-\Id \right),$
where the first component is understood as the family $(x,y)\mapsto P_{t,t+h}^a[f(\cdot;y)](x)$. The corresponding one-step BKM objective is
\begin{equation}
\label{eq:Q-generic}
  Q_{t,h}[U](x,a)
  :=P_{t,t+h}^a[f(\cdot;x)](x)
    +G\!\left(x,P_{t,t+h}^a[\Id+\widetilde g](x)\right).
\end{equation}
Whenever the global maximizer is uniquely defined, write
\begin{gather*}
  \alpha_{t,h}[U](x):=\argmax_{a\in A}Q_{t,h}[U](x,a)\\
  \text{and}\quad
  \cT_{t,h}(U):=\Phi_{t,h}\bigl(U,\alpha_{t,h}[U]\bigr).
\end{gather*}

Here $\Phi_{t,h}$ updates the continuation pair for a fixed action, $Q_{t,h}$ evaluates that action for the current player, and $\alpha_{t,h}[U]$ selects the maximizing action. Thus $\cT_{t,h}$ is the complete backward equilibrium step: $U_i^h=\cT_{t_i,h}(U_{i+1}^h)$.

The following quantities describe the optimality condition and its response to perturbations. Division by $h$ removes the short-time factor controlled by \cref{lem:S,cor:aux-param,cor:affine-data}:
\begin{align}
\label{eq:Gamma-def}
  \Gamma_{t,h}[U]&:=\frac1hD_aQ_{t,h}[U],\\
\label{eq:M-def}
  \bM_{t,h}[U]&:=-\frac1hD_{aa}^2Q_{t,h}[U],\\
\label{eq:R-def}
  \bR_{t,h}[U,a]&:=\frac1hD_a\Phi_{t,h}(U,a),\\
\label{eq:S-def}
  \bS_{t,h}[U,a]&:=\frac1hD_UD_aQ_{t,h}[U](\cdot,a).
\end{align}

The quantity $\Gamma$ is the scaled action gradient of the objective, $\bM$ its scaled negative action Hessian, $\bR$ the action sensitivity of the continuation update, and $\bS$ the continuation sensitivity of $\Gamma$. They will express how the optimal action and the equilibrium update change when $U$ is perturbed.

\begin{corollary}[Uniform local Lipschitz stability]
\label{cor:tube-lipschitz}
There exist $h_0>0$, $\rho_0>0$, and $C<\infty$ such that, whenever $0<h<h_0$ and $U,\widetilde U$ lie in a fixed $\rho_0$-neighborhood of the reference continuation pair $U(t+h)$ in $\cX^{2+\beta}$, the following estimates hold uniformly in $t$ and $a\in A$:
\begin{align}
\label{eq:Gamma-lipschitz}
  \norm{\Gamma_{t,h}[U]-\Gamma_{t,h}[\widetilde U]}_{C_b^\beta}
  &\le C\norm{U-\widetilde U}_{\cX^{2+\beta}},\\
\label{eq:M-lipschitz}
  \norm{\bM_{t,h}[U]-\bM_{t,h}[\widetilde U]}_{C_b^\beta}
  &\le C\norm{U-\widetilde U}_{\cX^{2+\beta}},\\
\label{eq:R-lipschitz}
  \norm{\bR_{t,h}[U,a]-\bR_{t,h}[\widetilde U,a]}_{\mathcal L(\R^m,\cX^\beta)}
  &\le C\norm{U-\widetilde U}_{\cX^{2+\beta}},\\
\label{eq:S-lipschitz}
  \norm{\bS_{t,h}[U,a]-\bS_{t,h}[\widetilde U,a]}_{\mathcal L(\cX^{2+\beta},C_b^\beta)}
  &\le C\norm{U-\widetilde U}_{\cX^{2+\beta}}.
\end{align}
If, throughout this neighborhood,
\begin{equation}
\label{eq:M-positive-tube}
  \bM_{t,h}[U](x,a)\ge\lambda_0I_m
\end{equation}
for some $\lambda_0>0$, then
\begin{equation}
\label{eq:Minv-lipschitz}
  \norm{\bM_{t,h}[U]^{-1}-\bM_{t,h}[\widetilde U]^{-1}}_{C_b^\beta}
  \le C\norm{U-\widetilde U}_{\cX^{2+\beta}},
\end{equation}
and the corresponding interior maximizers satisfy
\begin{equation}
\label{eq:alpha-lipschitz}
  \norm{\alpha_{t,h}[U]-\alpha_{t,h}[\widetilde U]}_{C_b^\beta}
  \le C\norm{U-\widetilde U}_{\cX^{2+\beta}}.
\end{equation}
\end{corollary}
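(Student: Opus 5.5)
The plan is to write each of $\Gamma$, $\bM$, $\bR$, $\bS$ explicitly in terms of the scaled control sensitivities $h^{-1}D_a^kP^a_{t,t+h}$ applied to the components of $U$. Then \cref{lem:S,cor:aux-param,cor:affine-data} reduce every claimed estimate to multilinear bounds in Hölder spaces. Concretely, differentiating \eqref{eq:Q-generic} in $a$ gives
\[
  \Gamma_{t,h}[U](x,a)=\tfrac1h\bigl(D_aP^a_{t,t+h}f(\cdot;x)\bigr)(x)
  +D_2G\bigl(x,P^a_{t,t+h}g(x)\bigr)\,\tfrac1hD_aP^a_{t,t+h}g(x),
  \qquad g=\Id+\widetilde g.
\]
For $\bM$ we get analogous terms with $D_a^2$, plus a term $D_{22}^2G\,[\tfrac1hD_aP g,\,D_aPg]$. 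This extra term carries an additional factor $D_aP^ag=O(h)$ by \cref{cor:affine-data}, so it is harmless. $\bR$ is simply $h^{-1}D_a\Phi$, which is linear in $U$ up to the fixed affine part. $\bS$ is the $U$-derivative of $\Gamma$: it is linear in the direction $\delta U=(\delta f,\delta\widetilde g)$, with coefficients depending on $U$ through $P^a g$ and $h^{-1}D_aP^ag$.

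\textbf{Step 1: differences that are linear or bilinear in $U$.} Differences in $U$ of terms linear in $U$ are handled by \eqref{eq:lemma-S-main} with $r=0$, via \cref{cor:aux-param} for $f$. For the $f$-terms in $\Gamma,\bM$ I also need diagonal evaluation $x\mapsto \psi(x;x)$ at the $C_b^\beta$ level. The output $h^{-1}D_a^kP f\in\cMix^{\beta,2+\beta}$, and the trivial analogue of \cref{lem:diagonal-evaluation} at order $\beta$ bounds its diagonal in $C^\beta_b$. Terms involving $G$ are compositions: $D_2G(x,P^ag(x))$ is Lipschitz in $\widetilde g$ in $C_b^\beta$, since $G\in C_b^{8+\beta}$ and $P^a$ is bounded on $C_b^\beta$ by \eqref{eq:parabolic-smoothing}. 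Products are handled by the algebra property of $C_b^\beta$. Boundedness in the $\rho_0$-neighborhood, which is needed for the product estimates, follows from the same lemmas together with the uniform bounds of \cref{ass:H5}. For \eqref{eq:R-lipschitz} the identity part cancels in the difference, so \cref{lem:S} gives the bound directly. For \eqref{eq:S-lipschitz}, I write $\bS[U]\delta U-\bS[\widetilde U]\delta U$ and apply the same product/composition bounds with one slot occupied by $\delta U$.

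\textbf{Step 2: uniformity in $a$ and the inverse of $\bM$.} Uniformity in $a$ is automatic, since all constants in \cref{lem:S} are uniform in $a\in A$. Estimate \eqref{eq:Minv-lipschitz} follows from the identity $M^{-1}-\widetilde M^{-1}=M^{-1}(\widetilde M-M)\widetilde M^{-1}$. Here \eqref{eq:M-positive-tube} bounds $\|M^{-1}\|_\infty\le\lambda_0^{-1}$, and the $C^\beta$ seminorm of $M^{-1}$ is bounded by the same identity applied at two points, using $\|M\|_{C^\beta}\le C$.

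\textbf{Step 3: the maximizers.} For \eqref{eq:alpha-lipschitz}, interior maximizers satisfy $\Gamma[U](x,\alpha[U](x))=0$. Writing $\alpha=\alpha[U]$ and $\widetilde\alpha=\alpha[\widetilde U]$, I use
\[
  0=\Gamma[U](x,\alpha)-\Gamma[\widetilde U](x,\widetilde\alpha)
  =\bigl(\Gamma[U]-\Gamma[\widetilde U]\bigr)(x,\alpha)
  -\int_0^1\bM[\widetilde U]\bigl(x,\widetilde\alpha+\theta(\alpha-\widetilde\alpha)\bigr)\dd\theta\,(\alpha-\widetilde\alpha).
\]
Convexity of $A$ keeps the segment in $A$. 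The averaged matrix is $\ge\lambda_0 I$, so I invert it and get $\alpha-\widetilde\alpha$ as a product of an inverse averaged Hessian with a $\Gamma$-difference.

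\textbf{Main obstacle.} The sup bound in Step 3 is immediate. The difficulty is the $C^\beta$ bound: the coefficients are evaluated along $x\mapsto(x,\alpha(x))$, so I need $\alpha,\widetilde\alpha\in C_b^\beta$ with bounds uniform in the neighborhood, and joint $C^\beta$-in-$x$, Lipschitz-in-$a$ control of $\Gamma,\bM$. The Lipschitz-in-$a$ control comes from \eqref{eq:lemma-S-lipschitz-a} for $j\le 2$. The uniform $C^\beta$ bound on $\alpha$ itself comes from the same identity, applied between the points $x$ and $x'$ for a fixed $U$ rather than between $U$ and $\widetilde U$. With these, composition preserves $C^\beta$ with constants depending only on the neighborhood, and the product estimate closes.
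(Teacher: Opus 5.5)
Your proposal is correct and follows the paper's proof. You expand $\Gamma,\bM,\bR,\bS$ through the scaled sensitivities, bound the differences by \cref{lem:S,cor:aux-param,cor:affine-data} with product and composition estimates, treat $\bM^{-1}$ by the identity $M^{-1}-\widetilde M^{-1}=M^{-1}(\widetilde M-M)\widetilde M^{-1}$ at one and two states, and get the maximizer bound from the first-order condition. The only difference is in that last step: the paper simply invokes the parameter-dependent implicit-function theorem, while you write out the mean-value identity with an averaged Hessian and derive the uniform $C_b^\beta$ bound on $\alpha$ needed for the composition, making explicit a step the paper leaves implicit.
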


These estimates show that perturbing the continuation data changes the normalized quantities in the stated norms by at most a constant times the input perturbation, uniformly as $h\to0$. Under the stated curvature and interiority conditions, the same holds for the maximizing action. This controls the ingredients of the exact linearization below; stability over many backward steps is established in \cref{sec:block-stability}.

\begin{proof}
Write
$m_{t,h}[U,a](x):=P_{t,t+h}^a[\Id+\widetilde g](x).$
Differentiating \eqref{eq:Q-generic} in $a$ gives
\begin{equation}
\label{eq:Gamma-expanded}
  \Gamma_{t,h}[U]
  =\frac1hD_aP^af^x
   +D_2G(x,m_{t,h}[U,a])\cdot
    \frac1hD_aP^a[\Id+\widetilde g].
\end{equation}
The difference between \eqref{eq:Gamma-expanded} for $U$ and $\widetilde U$ is controlled by \cref{lem:S,cor:aux-param,cor:affine-data} and boundedness of $D_{22}^2G$, proving \eqref{eq:Gamma-lipschitz}.

A second $a$-derivative yields
\begin{align*}
  \frac1hD_{aa}^2Q[U]
  ={}&\frac1hD_{aa}^2P^af^x
   +D_2G(x,m)\cdot\frac1hD_{aa}^2P^ag\\
   &+\frac1hD_{22}^2G(x,m)
     [D_aP^ag,D_aP^ag],
\end{align*}
where $g=\Id+\widetilde g$. The last term is $O(h)$ by \cref{lem:S,cor:affine-data}. The same estimates, together with bounded derivatives of $G$, prove \eqref{eq:M-lipschitz}.

Since $\Phi_{t,h}$ is affine in $U$ for fixed $a$, \cref{lem:S,cor:aux-param,cor:affine-data} imply \eqref{eq:R-lipschitz}. Differentiating \eqref{eq:Gamma-expanded} in the continuation direction $W=(W_f,W_g)$ gives
\begin{align*}
  \bS_{t,h}[U,a]W
  ={}&\frac1hD_aP^aW_f^x
  +D_2G(x,m)\cdot\frac1hD_aP^aW_g\\
  &+D_{22}^2G(x,m)
    \left[P^aW_g,\frac1hD_aP^ag\right],
\end{align*}
which proves \eqref{eq:S-lipschitz}. For the inverse matrices, use the pointwise identity
\[
 \bM[U]^{-1}-\bM[\widetilde U]^{-1}
 =\bM[U]^{-1}\bigl(\bM[\widetilde U]-\bM[U]\bigr)
   \bM[\widetilde U]^{-1},
\]
where the indices $(t,h)$ are suppressed. The lower bound \eqref{eq:M-positive-tube} gives a uniform supremum bound for the inverses. Applying the same identity at two states controls their H\"older seminorms by those of $\bM$, so their $C_b^\beta$ norms are uniformly bounded. The $C_b^\beta$ product estimate and \eqref{eq:M-lipschitz} therefore yield \eqref{eq:Minv-lipschitz}. Finally, the maximizers solve
$\Gamma_{t,h}[U]\bigl(x,\alpha_{t,h}[U](x)\bigr)=0.$
Since $D_a\Gamma=-\bM$ is uniformly invertible by \eqref{eq:M-positive-tube}, the parameter-dependent implicit-function theorem and \eqref{eq:Gamma-lipschitz}--\eqref{eq:Minv-lipschitz} yield \eqref{eq:alpha-lipschitz}.
\end{proof}

\subsection{Exact linearization of the equilibrium map}
\label{subsec:exact-linearization}

To track continuation errors through a full equilibrium step, we must account for both their direct propagation and the induced change in the maximizing action. The following proposition separates these two effects. The linearization is exact for fixed $h$: it differentiates the actual update without a short-time approximation or frozen coefficients.

\begin{proposition}[Exact one-step linearization]
\label{prop:exact-linearization}
Let $U$ lie in a neighborhood of the reference continuation functions in which the maximizer $\alpha=\alpha_{t,h}[U]$ is unique and uniformly interior and the uniform curvature bound \eqref{eq:M-positive-tube} holds. For fixed $h$, the maps $U\mapsto\alpha_{t,h}[U]$ and $U\mapsto\cT_{t,h}(U)$ are $C^1$ from this $\cX^{2+\beta}$-neighborhood into $C_b^{2+\beta}(\R^d;\R^m)$ and $\cX^{2+\beta}$, respectively. Then, for every perturbation $W\in\cX^{2+\beta}$,
\begin{align}
\label{eq:Dalpha-exact}
 &D\alpha_{t,h}[U]W
  =\bM_{t,h}[U]^{-1}\bS_{t,h}[U,\alpha]W,\\
\shortintertext{and}
\label{eq:DT-exact}
 &D\cT_{t,h}(U)W
  =P_{t,t+h}^{\alpha}W
   +h\bR_{t,h}[U,\alpha]\,
      \bM_{t,h}[U]^{-1}\,
      \bS_{t,h}[U,\alpha]W.
\end{align}
In \eqref{eq:DT-exact}, $P_{t,t+h}^{\alpha}$ acts componentwise on the $f$- and $\widetilde g$-variations, with the action $\alpha(x)$ fixed during the transition from $x$; the affine identity part of $g$ does not vary.
\end{proposition}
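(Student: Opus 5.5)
The plan is to treat the maximizer as an implicit function of the continuation data, and then to differentiate the composition $\cT_{t,h}(U)=\Phi_{t,h}(U,\alpha_{t,h}[U])$ by the chain rule. Fix $h$. By uniqueness and uniform interiority, $\alpha=\alpha_{t,h}[U]$ is characterized pointwise by the scaled first-order condition
\[
\Gamma_{t,h}[U]\bigl(x,\alpha(x)\bigr)=0 .
\]
We also have $D_a\Gamma_{t,h}[U]=-\bM_{t,h}[U]$, which is invertible with $\bM\ge\lambda_0 I_m$ by \eqref{eq:M-positive-tube}.

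The first step is to regard $\mathcal G(U,\alpha):=\Gamma_{t,h}[U](\cdot,\alpha(\cdot))$ as a map from an open set of $\cX^{2+\beta}\times C_b^{2+\beta}(\R^d;\R^m)$ into $C_b^{2+\beta}(\R^d;\R^m)$. For fixed $h>0$ this map is $C^1$. The reason is that parabolic smoothing \eqref{eq:parabolic-smoothing}, applied to the Duhamel representation behind \cref{lem:S}, makes $P^a_{t,t+h}$ and its $a$-derivatives bounded from $C_b^{2+\beta}$ into $C_b^{2+\beta}$ with $h$-dependent constants. The $a$-derivatives up to order six are available, and the $j\le5$ Lipschitz bounds \eqref{eq:lemma-S-lipschitz-a} give differentiability of the superposition $x\mapsto(\ldots)(x,\alpha(x))$. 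Mesh-uniformity is not needed here. Diagonal evaluation in $y=x$ is handled by \cref{lem:diagonal-evaluation} together with \cref{cor:aux-param}, and the affine part by \cref{cor:affine-data}. The dependence on $U$ is affine inside $P^a$ and smooth through $G$, which is $C_b^{8+\beta}$.

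The $\alpha$-derivative of $\mathcal G$ is multiplication by $-\bM_{t,h}[U](x,\alpha(x))$. This operator is invertible on $C_b^{2+\beta}$, since the pointwise inverse of a uniformly positive $C_b^{2+\beta}$ matrix field is again $C_b^{2+\beta}$ (the product estimate argument in the proof of \cref{cor:tube-lipschitz}). The Banach-space implicit function theorem then gives a $C^1$ solution map $U\mapsto\alpha_{t,h}[U]$. This map coincides with the argmax by uniqueness, and uniform interiority keeps the solution in $\operatorname{int}A$. Differentiating $\mathcal G(U,\alpha[U])=0$ in the direction $W$ gives
\[
\bS_{t,h}[U,\alpha]W-\bM_{t,h}[U]\,D\alpha[U]W=0 ,
\]
where $D_U\Gamma=\bS$ by definition \eqref{eq:S-def}. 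This is \eqref{eq:Dalpha-exact}.

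Next, $\Phi_{t,h}(U,a)$ is affine in $U$ for fixed $a$, with $D_U\Phi_{t,h}(U,a)W=P^a_{t,t+h}W$ acting componentwise. The identity part of $g$ is fixed, so only $\widetilde g$ varies. The $a$-derivative is $D_a\Phi=h\bR$. With the action field $\alpha(x)$ inserted pointwise, the map $(U,\alpha)\mapsto\Phi_{t,h}(U,\alpha(\cdot))$ is $C^1$ into $\cX^{2+\beta}$ for fixed $h$, by the same superposition argument as above. The chain rule then yields
\[
D\cT_{t,h}(U)W=P^{\alpha}_{t,t+h}W+h\bR_{t,h}[U,\alpha]\,D\alpha[U]W .
\]
Substituting \eqref{eq:Dalpha-exact} gives \eqref{eq:DT-exact}. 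Note that the pointwise freezing $a=\alpha(x)$ commutes with differentiation in $U$, because the state variable of $P^a$ and the evaluation point coincide.

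The main obstacle is the $C^1$ property of the superposition maps in $C_b^{2+\beta}$. The difficulty is that the output depends on $x$ both through the operator and through $\alpha(x)$. Two derivatives of the composite therefore involve up to three $a$-derivatives of $P^a\varphi$ and mixed $x$--$a$ derivatives, and a further order is needed for the Fr\'echet remainder. I would handle this via Taylor expansion in $a$ with the integral remainder, using the six available control derivatives and the Lipschitz bound at order $j\le5$. All estimates are allowed to depend on $h$.
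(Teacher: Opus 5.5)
Your proposal is correct and follows the paper's proof. The paper applies the Banach-space implicit-function theorem to the score map, whose action derivative is multiplication by $-\bM$, and differentiates the first-order condition to get \eqref{eq:Dalpha-exact}. It then applies the chain rule to $\Phi_{t,h}(U,\alpha_{t,h}[U])$, using $D_U\Phi_{t,h}(U,a)W=P^a_{t,t+h}W$ and $D_a\Phi_{t,h}=h\bR_{t,h}$, to get \eqref{eq:DT-exact}. You do the same, and simply spell out the $C^1$ superposition argument that the paper compresses into ``zero-order bounds and composition''.

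One small correction: the Lipschitz-in-$a$ input for that superposition argument should be the loss-free bound on $C_b^{2+\beta}$. This follows from \eqref{eq:action-zero-order} with $k=j+1\le6$ via the same integral identity. By contrast, \eqref{eq:lemma-S-lipschitz-a} itself only maps $C_b^{2+\beta}$ inputs into $C_b^{\beta}$.
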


In \eqref{eq:Dalpha-exact}, $\bS W$ is the shift in the scaled optimality condition and $\bM^{-1}$ converts it into the action response. Thus \eqref{eq:DT-exact} separates direct propagation, $P^\alpha W$, from the feedback correction caused by reoptimizing. Integrating this derivative along the segment between two continuation pairs gives their exact update difference, hence the error recursion used in \cref{sec:main-proof}. The correction's structure is examined next and underlies the block stability in \cref{sec:block-stability}.

\begin{proof}
For fixed $h$, the zero-order bounds of Appendix~\ref{app:sensitivity} and composition give a $C^1$ score map in these spaces. Its action derivative is multiplication by $-\bM$, with bounded inverse $-\bM^{-1}$ by \eqref{eq:M-positive-tube}; the Banach-space implicit-function theorem applies.
At the optimizer,
\begin{equation}
\label{eq:FOC-general}
  D_aQ_{t,h}[U]\bigl(x,\alpha_{t,h}[U](x)\bigr)=0.
\end{equation}
Differentiate \eqref{eq:FOC-general} in the continuation direction $W$. By \eqref{eq:M-def} and \eqref{eq:S-def},
\begin{equation*}
  -h\bM_{t,h}[U]D\alpha_{t,h}[U]W
  +h\bS_{t,h}[U,\alpha]W=0,
\end{equation*}
which proves \eqref{eq:Dalpha-exact}. Next,
$\cT_{t,h}(U)=\Phi_{t,h}\bigl(U,\alpha_{t,h}[U]\bigr).$
For fixed $a$, $D_U\Phi_{t,h}(U,a)W=P_{t,t+h}^aW$, whereas $D_a\Phi_{t,h}=h\bR_{t,h}$. The chain rule together with \eqref{eq:Dalpha-exact} yields \eqref{eq:DT-exact}.
\end{proof}

\subsection{Feedback cancellation induced by discrete optimality}
\label{subsec:frozen-nilpotence}

The feedback correction in \cref{prop:exact-linearization} involves second spatial derivatives, so repeated application could appear to require ever higher regularity. We freeze the coefficients at a reference point, retaining the exact discrete optimizer in the algebraic factors. The discrete first-order condition cancels the differential feedback correction upon a second application: algebraically, $\bN_h^2=0$, so $\bN_h$ is nilpotent. Hence $(I+h\bN_h)^n=I+nh\bN_h$: repeated steps generate no higher powers of this differential correction, and parabolic smoothing controls the remaining derivatives. This cancellation provides the comparison model for the variable-coefficient block stability in \cref{sec:block-stability}.

Freeze the coefficients and reference continuation quantities at $(t_*,x_*)$ and an interior discrete optimizer $a_*$. Let
\begin{equation*}
  L_*:=b(t_*,x_*,a_*)\cdot D
      +\frac12\operatorname{tr}(c(t_*,x_*,a_*)D^2),
  \qquad
  P_h:=e^{hL_*}.
\end{equation*}
For $j=1,\dots,m$, define the constant-coefficient differential operators
\begin{equation*}
  \Lambda_j
  :=\partial_{a_j}L^a\big|_{(t_*,x_*,a_*)}
  =\partial_{a_j}b_*\cdot D
   +\frac12\operatorname{tr}((\partial_{a_j}c_*)D^2).
\end{equation*}
We write $\boldsymbol\Lambda=(\Lambda_1,\dots,\Lambda_m)$. Since all coefficients are constant after freezing,
\begin{equation*}
  [L_*,\Lambda_j]:=L_*\Lambda_j-\Lambda_jL_*=0,
  \qquad
  \frac1hD_{a_j}P_h^a\big|_{a=a_*}=P_h\Lambda_j.
\end{equation*}

Thus the two operators commute.

Let $\bar U=U(t_*+h)$ and choose $a_* = \alpha_{t_*,h}[\bar U](x_*)$ from the exact discrete objective. Retain the exact transition when freezing the algebraic coefficients:
\begin{equation*}
  m_h:=P_{t_*,t_*+h}^{a_*}[\Id+\bar{\widetilde g}](x_*),
  \qquad
  \gamma_h:=D_2G(x_*,m_h).
\end{equation*}
Here $W=(W_f,W_g)$ denotes a perturbation of the continuation pair $U$, as arises when comparing discrete and continuous continuations. The following functional combines its direct contribution through $f$ with its contribution through $g$, weighted by the derivative of $G$:

$\mathbf w_hW :=W_f(\cdot;x_*)+\gamma_h\cdot W_g.$
Here $\mathbf w_hW$ is a scalar function of the state variable; the reference parameter is frozen at $y=x_*$. Let $\bar{\bR}_h$ and $\bar{\bM}_h$ denote the values of the exact quantities \eqref{eq:R-def} and \eqref{eq:M-def} at $(t_*,x_*,\bar U,a_*)$; the $f$-component of $\bar{\bR}_h$ retains its auxiliary parameter. These algebraic coefficients are not recomputed using $P_h=e^{hL_*}$.

The discrete first-order condition implies the algebraic identity
\begin{equation}
\label{eq:wR-zero}
  \mathbf w_h\bar{\bR}_h=0.
\end{equation}
Indeed, writing $\bar g=\Id+\bar{\widetilde g}$ and $P^a=P_{t_*,t_*+h}^a$, the chain rule gives
\begin{align*}
 \mathbf w_h\bar{\bR}_h
 &=\left.\frac1h\left(D_aP^a[\bar f(\cdot;x_*)](x_*)
       +\gamma_h\cdot D_aP^a\bar g(x_*)\right)\right|_{a=a_*}\\
 &=\left.\frac1hD_a\left(P^a[\bar f(\cdot;x_*)](x_*)
       +G(x_*,P^a\bar g(x_*))\right)\right|_{a=a_*}\\
 &=\frac1hD_aQ_{t_*,h}[\bar U](x_*,a_*)=0,
\end{align*}
where the last equality is the first-order condition at the interior discrete maximizer.

The mixed sensitivity $\bS=h^{-1}D_UD_aQ$ measures how continuation perturbations change the action gradient. Define its frozen comparison model by
\begin{equation}
\label{eq:S-frozen-decomp}
  \bar{\bS}_hW
  :=P_h\boldsymbol\Lambda(\mathbf w_hW)
   +\mathbf B_hP_hW_g.
\end{equation}
The first term contains the spatial derivatives of the objective-relevant combination $\mathbf w_hW$. The second accounts for the change in the weight $D_2G$: $\mathbf B_h$ combines $D_{22}^2G$ with the frozen first control sensitivity of $g$ and introduces no spatial derivatives of $W$. Equation~\eqref{eq:S-frozen-decomp} defines a comparison model, not an exact identity for the variable-coefficient sensitivity; the comparison is justified in \cref{app:localization}.

Define the frozen differential feedback correction
\begin{equation}
\label{eq:N-def}
  \bN_h
  :=\bar{\bR}_h\bar{\bM}_h^{-1}
     \boldsymbol\Lambda\,\mathbf w_h.
\end{equation}
This constant-coefficient differential operator has order at most two.

Read from right to left, $\mathbf w_h$ combines the continuation perturbations, $\boldsymbol\Lambda$ measures their differential effect on action optimality, $\bar{\bM}_h^{-1}$ converts this into an action response, and $\bar{\bR}_h$ maps that response back to the continuations. Thus $\bN_h$ describes the differential part of the feedback loop.

\begin{proposition}[Frozen feedback cancellation]
\label{prop:nilpotent}
The operator \eqref{eq:N-def} satisfies
\begin{align}
\label{eq:N-nilpotent}
 &\bN_h^2=0.\\
\shortintertext{Moreover,}
\label{eq:P-N-commute}
 &[P_h,\bN_h]=P_h\bN_h-\bN_hP_h=0.\\
\shortintertext{Hence, with
$\mathscr E_h:=P_h(I+h\bN_h),$
we have, for every integer $n\ge0$,}
\label{eq:frozen-power}
 &\mathscr E_h^n
  =P_{nh}(I+nh\bN_h).
\end{align}
The complete frozen comparison model has the form
$\bar E_h =P_h(I+h\bN_h)+hP_h\bK_{0,h},$
where $\bK_{0,h}$ is a uniformly bounded zeroth-order operator arising from the $D_{22}^2G$ term in \eqref{eq:S-frozen-decomp}.
\end{proposition}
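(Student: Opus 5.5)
The plan is to prove the three identities in order, reading $\bN_h$ as the composition $\bar{\bR}_h\circ\bar{\bM}_h^{-1}\circ\boldsymbol\Lambda\circ\mathbf w_h$. Here $\mathbf w_h$ maps continuation perturbations $W$ to scalar functions of the state variable. The operator $\boldsymbol\Lambda$ maps scalar functions to $\R^m$-valued functions, $\bar{\bM}_h^{-1}$ is a constant matrix, and $\bar{\bR}_h$ is a constant linear map from $\R^m$ into continuation pairs, with constant vectors viewed as constant functions. The $f$-component of $\bar{\bR}_h$ retains its auxiliary $y$-dependence, but $\mathbf w_h$ evaluates at $y=x_*$.

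For \eqref{eq:N-nilpotent}, write
\[
\bN_h^2W=\bar{\bR}_h\bar{\bM}_h^{-1}\boldsymbol\Lambda\bigl(\mathbf w_h\bar{\bR}_h\,v\bigr),
\qquad v:=\bar{\bM}_h^{-1}\boldsymbol\Lambda\mathbf w_hW .
\]
Since $\bar{\bR}_h$ has frozen (state-independent) coefficients, $\mathbf w_h\bar{\bR}_h v$ is the pointwise contraction of $v$ with the row $\mathbf w_h\bar{\bR}_h$. That row vanishes by \eqref{eq:wR-zero}, so the inner factor is zero, and hence $\bN_h^2=0$.

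The delicate point is to make sure that $\mathbf w_h$ applied to $\bar{\bR}_h v$ really factors through the constant row $\mathbf w_h\bar{\bR}_h$. This requires that $\gamma_h$ and the frozen values of $\bar{\bR}_h$ be constants, with no state dependence left over that $\boldsymbol\Lambda$ could differentiate. I would state this explicitly from the freezing convention: all algebraic coefficients are evaluated at $(t_*,x_*,\bar U,a_*)$.

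For \eqref{eq:P-N-commute}, $P_h=e^{hL_*}$ is a constant-coefficient Fourier multiplier acting componentwise in the state variable. It therefore commutes with $\boldsymbol\Lambda$, since $[L_*,\Lambda_j]=0$. It also commutes with multiplication by the constants $\gamma_h$, $\bar{\bM}_h^{-1}$ and $\bar{\bR}_h$, and with evaluation at the frozen parameter $y=x_*$, because $P_h$ does not act in $y$. Composing these facts gives $P_h\bN_h=\bN_hP_h$ on smooth data, and by density-free direct computation on the relevant H\"older classes.

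For \eqref{eq:frozen-power}, commutation yields $\mathscr E_h^n=P_h^n(I+h\bN_h)^n$. The semigroup property gives $P_h^n=P_{nh}=e^{nhL_*}$. The binomial expansion truncates after the linear term because $\bN_h^2=0$, leaving $(I+h\bN_h)^n=I+nh\bN_h$.

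For the final decomposition, I would substitute the frozen model \eqref{eq:S-frozen-decomp} for $\bS$ into \eqref{eq:DT-exact}, with $P^\alpha$ replaced by $P_h$. This gives
\[
\bar E_h=P_h+h\bar{\bR}_h\bar{\bM}_h^{-1}\bigl(P_h\boldsymbol\Lambda\mathbf w_h+\mathbf B_hP_h\pi_g\bigr),
\]
where $\pi_g$ projects onto the $W_g$-component. Using the commutation again, the first correction equals $hP_h\bN_h$. For the second, $\mathbf B_h$ is a constant matrix built from $D_{22}^2G$ and the scaled control sensitivity of $g$, so it commutes with $P_h$. The second correction therefore equals $hP_h\bK_{0,h}$ with
\[
\bK_{0,h}:=\bar{\bR}_h\bar{\bM}_h^{-1}\mathbf B_h\pi_g .
\]
Boundedness of $\bK_{0,h}$ uniformly in $h$ follows from three facts. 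First, $\bar{\bR}_h$ is bounded by \cref{lem:S,cor:affine-data}. Second, $\bar{\bM}_h^{-1}$ is bounded by the curvature lower bound \eqref{eq:M-positive-tube}. Third, $\mathbf B_h$ is bounded by the bounded derivatives of $G$ and \cref{cor:affine-data}. Since $\bK_{0,h}$ involves no spatial derivatives, it is zeroth order.
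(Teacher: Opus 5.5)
Your proposal is correct and follows essentially the same route as the paper's proof. Nilpotence comes from the vanishing row $\mathbf w_h\bar{\bR}_h=0$ acting on state-independent frozen coefficients, commutation from $P_h$ commuting with the constant-coefficient $\Lambda_j$ and the frozen algebraic factors, the power formula from binomial truncation, and $\bK_{0,h}$ is read off from the $\mathbf B_hP_hW_g$ term. Your explicit formula $\bK_{0,h}=\bar{\bR}_h\bar{\bM}_h^{-1}\mathbf B_h\pi_g$ and your remark that $\bar{\bM}_h^{-1}$ needs a curvature lower bound just make explicit what the paper leaves implicit; at the reference data that bound comes from \eqref{eq:qhat-concavity}.
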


The identity $\bN_h^2=0$ means that a second feedback application vanishes by \eqref{eq:wR-zero}. Together with commutation, this yields \eqref{eq:frozen-power}: repeated principal steps introduce no higher powers of the differential correction. The full model $\bar E_h$ also contains the $D_{22}^2G$ contribution, but only as the bounded zeroth-order perturbation $hP_h\bK_{0,h}$. It causes no additional derivative loss and is treated perturbatively in \cref{sec:block-stability}; its accumulated effect need not vanish as $h\to0$.

\begin{proof}
Using \eqref{eq:N-def} twice and the fact that all frozen coefficients are constant,
\begin{align*}
  \bN_h^2W
  &=\bar{\bR}_h\bar{\bM}_h^{-1}
    \boldsymbol\Lambda\,
    \mathbf w_h\bar{\bR}_h\bar{\bM}_h^{-1}
    \boldsymbol\Lambda\,\mathbf w_hW\\
  &=0
\end{align*}
by \eqref{eq:wR-zero}. This proves \eqref{eq:N-nilpotent}. Since $L_*$ and every $\Lambda_j$ are constant-coefficient differential operators, they commute. The remaining factors in \eqref{eq:N-def} are frozen algebraic coefficients. Therefore \eqref{eq:P-N-commute} holds. Equations \eqref{eq:N-nilpotent} and \eqref{eq:P-N-commute} give
$\mathscr E_h^n =P_h^n(I+h\bN_h)^n =P_{nh}(I+nh\bN_h),$
which is \eqref{eq:frozen-power}.

Finally, insert the model \eqref{eq:S-frozen-decomp} and the frozen algebraic coefficients into the form of \eqref{eq:DT-exact}. The first term produces $hP_h\bN_h$, whereas the $\mathbf B_hP_hW_g$ contribution is zeroth order and can be written as $hP_h\bK_{0,h}W$. Uniform boundedness of $\bK_{0,h}$ follows from \cref{lem:S,cor:affine-data} and boundedness of $D_{22}^2G$. The comparison between this model and the actual variable-coefficient linearization is proved in \cref{app:localization}.
\end{proof}

\section{Uniform block stability}
\label{sec:block-stability}

The preceding section identifies a cancellation in a single frozen step. For \cref{thm:main}, however, errors must remain controlled through a number of backward steps that grows as $h$ decreases. We now turn that cancellation into bounds for products of the actual linearized equilibrium updates over short time blocks, with constants independent of the mesh. These bounds ensure that small local errors are not excessively amplified by the backward recursion; they provide the stability input used in \cref{sec:main-proof}.


\subsection{Frozen dyadic estimates}
\label{subsec:frozen-dyadic}

Let $\mathcal C^s=B_{\infty,\infty}^s(\R^d;E)$ be the inhomogeneous Besov space with fixed Banach range $E$. For positive noninteger $s$, it equals the H\"older space $C_b^s(\R^d;E)$ of \cref{subsec:function-spaces}, with equivalent norms. Choosing $E=C_b^{2+\beta}(\R_y^d)$ recovers $\cMix^{s,2+\beta}$ for the $f$-component; the corresponding product spaces at $s=\beta,2+\beta$ are $\cX^\beta,\cX^{2+\beta}$. Thus we change the description of regularity, not the regularity required.
A nonhomogeneous Littlewood--Paley decomposition $(\Delta_q)_{q\ge-1}$ separates a function into smooth frequency bands of size $2^q$, $q\ge0$, and a low-frequency component $\Delta_{-1}$; see \cite[Chapter~2]{BahouriCheminDanchin2011}. We use it to balance the spatial derivatives in the feedback correction against diffusion damping at each frequency scale.

Let $P_t=e^{tL_*}$ and $\mathscr E_h=P_h(I+h\bN_h)$ be the frozen operators of \cref{prop:nilpotent}. Since $L_*$ has constant coefficients, $P_t$ is Gaussian averaging followed by translation by the constant drift. Uniform ellipticity gives, for $q\ge0$ and $\lambda_q=2^{2q}$,
\[
 \|\Delta_qP_tu\|_{L^\infty(E)}
 \le Ce^{-ct\lambda_q}\|\Delta_qu\|_{L^\infty(E)}.
\]
Thus diffusion damps higher spatial frequencies more rapidly. On the same band, the operator $\bN_h$, of order at most two, contributes at most a factor $C\lambda_q$. Applying these bounds to the exact identity $\mathscr E_h^n=P_{nh}(I+nh\bN_h)$ yields
\begin{equation}
\label{eq:frozen-dyadic}
 \|\Delta_q\mathscr E_h^n u\|_{L^\infty(E)}
 \le Ce^{-cnh\lambda_q}(1+nh\lambda_q)
       \|\Delta_qu\|_{L^\infty(E)}.
\end{equation}
Since $e^{-cr}(1+r)$ is bounded for $r\ge0$, diffusion controls the additional feedback factor in \eqref{eq:frozen-dyadic}. The low-frequency block is also uniformly bounded on bounded time intervals, so
$\sup_{nh\le\Delta}\|\mathscr E_h^n\|_{\mathcal L(\mathcal C^s)}\le C_\Delta.$
The scalar heat estimate is \cite[Lemma~2.1]{Danchin2015}; a linear
change of variables and translation handle the frozen diffusion and drift.
Its $L^1$ kernel proof extends to $E$ via the integral norm inequality.
Bernstein's inequality controls the derivatives in $\bN_h$
\cite[Chapter~2]{BahouriCheminDanchin2011}.

\subsection{A two-norm block lemma}
\label{subsec:micro-duhamel}

We now describe perturbations of the frozen principal step by operators $K_{i,h}^{(2)},K_{i,h}^{(1)},K_{i,h}^{(0)}$. Put $H=\mathcal C^{2+\beta}(E)$, $J=\mathcal C^{1+\beta}(E)$, and $L=\mathcal C^\beta(E)$: these measure two, one, and zero spatial derivatives, respectively, together with $\beta$-H\"older regularity. A perturbation may be small in a weaker norm even when its strong-norm bound is not small; diffusion recovers the missing regularity. We require only the following one-step estimates, not a factorization such as $K_{i,h}=P_hB_{i,h}$:
\begin{align}
\label{eq:two-norm-classes}
 \|K_{i,h}^{(2)}u\|_H&\le b_2\|u\|_H+d_2\|u\|_0,&
 \|K_{i,h}^{(2)}u\|_L&\le h(b_2\|u\|_H+d_2\|u\|_0),\notag\\*
 \|K_{i,h}^{(1)}u\|_H&\le b_1\sqrt h\|u\|_H,&
 \|K_{i,h}^{(1)}u\|_J&\le hb_1\|u\|_H,\\*
 \|K_{i,h}^{(0)}u\|_H&\le hb_0\|u\|_H.\notag
\end{align}
Here $\|\cdot\|_0$ is the $E$-valued supremum norm, and the constants measure the sizes of the perturbations. The superscripts label estimate types, not necessarily differential orders: $K^{(2)}$ gains a factor $h$ in the norm two levels weaker, $K^{(1)}$ gains $h$ one level weaker but only $\sqrt h$ in $H$, and $K^{(0)}$ is already $O(h)$ in $H$.
Both bounds for each operator hold simultaneously; no weaker norm replaces a stronger one. For example, putting $A=b_2\|u\|_H+d_2\|u\|_0$, the two bounds for $v=K_{i,h}^{(2)}u$ imply
\[
 2^{q(2+\beta)}\|\Delta_qv\|_{L^\infty(E)}
 \le C\min\{1,h2^{2q}\}A,\qquad q\ge0.
\]
Taking the better bound on each frequency band, then combining it with diffusion damping, controls the accumulated perturbation in the strong norm $H$.
For a block of length $Mh\le\Delta\le1$, set
\[
 (\mathfrak K_h^{(k)}u)_i
 =\sum_{\ell=i}^{M-1}\mathscr E_h^{\ell-i}
                   K_{\ell,h}^{(k)}u_{\ell+1}.
\]
Each term is the perturbation created at step $\ell$, propagated back to time $i$ by the frozen principal evolution; the sum collects these contributions over the block. Lemma~\ref{lem:micro-duhamel} controls their accumulated strong norm uniformly in the number of steps. Here $\|\cdot\|_{\ell^\infty H}$ denotes the maximum $H$-norm over the block indices.

\begin{lemma}[Two-norm block estimates]
\label{lem:micro-duhamel}
Under \eqref{eq:two-norm-classes}, with
$X=\max_i\|u_i\|_H$ and $Y=\max_i\|u_i\|_0$,
\begin{align*}
 \|\mathfrak K_h^{(2)}u\|_{\ell^\infty H}&\le C(b_2X+d_2Y),\\
 \|\mathfrak K_h^{(1)}u\|_{\ell^\infty H}&\le Cb_1\sqrt\Delta X,\\
 \|\mathfrak K_h^{(0)}u\|_{\ell^\infty H}&\le Cb_0\Delta X.
\end{align*}
The constant is independent of the mesh and the number of steps.
\end{lemma}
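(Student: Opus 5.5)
We work frequency band by frequency band with the frozen dyadic bound \eqref{eq:frozen-dyadic}. For $q\ge0$ write $\lambda_q=2^{2q}$, and set $n=\ell-i$ for the step counter in $(\mathfrak K_h^{(k)}u)_i$. Then \eqref{eq:frozen-dyadic}, together with boundedness of $e^{-r}(1+r)$, gives $\|\Delta_q\mathscr E_h^{n}v\|_{L^\infty(E)}\le Ce^{-c'nh\lambda_q}\|\Delta_qv\|_{L^\infty(E)}$ for a smaller $c'>0$. The low-frequency block $q=-1$ is handled by the uniform bound $\sup_{nh\le\Delta}\|\mathscr E_h^n\|_{\mathcal L(\mathcal C^s)}\le C_\Delta$ together with the fact that $\Delta_{-1}$ sees all the Hölder norms $H,J,L$ equivalently. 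For $q=-1$ each estimate is therefore a direct sum over at most $M\le\Delta/h$ terms. The decisive gains are the factors $h$ in \eqref{eq:two-norm-classes}, possibly after converting the weak-norm bound to the low band; for $K^{(2)}$ the weak-norm bound $hA$ summed over $M$ terms gives $C\Delta A$.

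For $q\ge0$, fix $i$ and estimate $2^{q(2+\beta)}\|\Delta_q(\mathfrak K_h^{(k)}u)_i\|$ by $\sum_{n\ge0}e^{-c'nh\lambda_q}\,2^{q(2+\beta)}\|\Delta_qK^{(k)}_{\ell,h}u_{\ell+1}\|$. For $K^{(2)}$, combining both bounds as displayed before the lemma gives $2^{q(2+\beta)}\|\Delta_qK^{(2)}u\|\le C\min\{1,h\lambda_q\}A$ with $A\le b_2X+d_2Y$. Then
\[
\sum_{n\ge0}e^{-c'nh\lambda_q}\min\{1,h\lambda_q\}\le C\,\frac{\min\{1,h\lambda_q\}}{1-e^{-c'h\lambda_q}}\le C',
\]
uniformly in $q$ and $h$: if $h\lambda_q\le1$ the denominator is comparable to $h\lambda_q$, and otherwise it is bounded below. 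Taking the supremum over $q$ yields the $H$ bound $C(b_2X+d_2Y)$.

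For $K^{(1)}$, interpolate the two bounds in the same way: $2^{q(2+\beta)}\|\Delta_qK^{(1)}u\|\le C\min\{\sqrt h,\,h2^q\}b_1X$. The geometric sum is now truncated at $n\le M$, so it is bounded by $C\min\{M,(h\lambda_q)^{-1}\}$. If $h\lambda_q\le1$, the product is at most $\min\{Mh2^q,2^{-q}\}b_1X\le\sqrt{Mh}\,b_1X\le\sqrt\Delta\,b_1X$, using $\min\{a,b\}\le\sqrt{ab}$. If $h\lambda_q>1$, the contribution is at most $\sqrt h\,(h\lambda_q)^{-1}\le\sqrt h\le\sqrt\Delta$.

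The case $K^{(0)}$ is trivial: the $H$ bound $hb_0X$ summed over $M$ terms, with $\mathscr E_h^n$ uniformly bounded on $H$, gives $Cb_0Mh\,X\le Cb_0\Delta X$.

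The main obstacle is the $K^{(1)}$ case, where the $\sqrt\Delta$ factor must come out uniformly in $h$. This requires the truncation $n\le M$ together with the interpolation $\min\{Mh2^q,2^{-q}\}\le\sqrt{Mh}$, rather than the infinite geometric sum. A second point to check is that the dyadic damping constant and the Bernstein constant in \eqref{eq:frozen-dyadic} stay uniform for $E$-valued functions. We take both from the frozen dyadic estimate as stated, and take the identification of $\mathcal C^s(E)$ with the Hölder spaces at $s=\beta,1+\beta,2+\beta$ from \cref{subsec:frozen-dyadic}.
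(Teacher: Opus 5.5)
Your proposal is correct and follows the paper's proof essentially step by step. The shared steps are the band-wise minimum of the two bounds, the damped geometric sums, the truncation at $M$ together with $\min\{a,b\}\le\sqrt{ab}$ for $K^{(1)}$, and a factor $h$ per insertion on the low-frequency block. Absorbing $(1+nh\lambda_q)$ into a smaller damping rate $c'$, instead of summing the differentiated series as the paper does, is only a cosmetic difference.

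There is one small slip. For $h\lambda_q>1$, the truncated sum $\sum_{n=0}^{M-1}e^{-c'nh\lambda_q}$ is $O(1)$, since the $n=0$ term alone equals one. It is therefore not bounded by $(h\lambda_q)^{-1}$, so the intermediate expression $\sqrt h\,(h\lambda_q)^{-1}$ is wrong. The final bound $C\sqrt h\,b_1X\le C\sqrt\Delta\,b_1X$ is unaffected.
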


The proof of \cref{lem:micro-duhamel} is given in \cref{app:micro-duhamel}.
Its application to the BKM linearizations additionally requires smallness of $b_2$ and control of the weaker norm $Y$, established in \cref{app:localization}.

\subsection{From frozen to path-averaged BKM linearizations}
\label{subsec:path-averaged}

We now pass from the frozen comparison model to the operators that propagate actual continuation errors. Averaging the exact derivative along the segment between the continuous reference data and perturbed data gives an exact finite-difference identity, rather than a first-order approximation.

At grid times, the index $(i,h)$ abbreviates $(t_i,h)$ in the one-step operators.
For $\bar U_i=U(t_i)$ and $\|e_i\|_{\cX^{2+\beta}}\le\rho$, define
\begin{equation}
\label{eq:path-averaged-A}
 A_{i,h}=\int_0^1D\cT_{i,h}(\bar U_{i+1}+\theta e_{i+1})\,\dd\theta.
\end{equation}
Here $e_i$ denotes a continuation perturbation at time $t_i$, and the average is over continuation data, not over time or randomness. The fundamental theorem of calculus gives
\[
 \cT_{i,h}(\bar U_{i+1}+e_{i+1})-\cT_{i,h}(\bar U_{i+1})
 =A_{i,h}e_{i+1}.
\]
Products of the $A_{i,h}$ therefore describe repeated propagation of errors. These averages need not retain the frozen cancellation. We compare them first with the linearization along the smooth reference solution, then locally with the frozen Gaussian model. Throughout, the action is held constant during each transition and substituted only afterwards, preserving the sample-and-hold rule.

\begin{proposition}[Local block stability]
\label{prop:tube-stability}
There exist $\rho_0,\Delta_0>0$ and $C_B<\infty$ such that, for the operators
in \eqref{eq:path-averaged-A} with $\|e_i\|_{\cX^{2+\beta}}\le\rho_0$,
\begin{equation}
\label{eq:tube-block-stability}
 \|A_{i,h}\cdots A_{j-1,h}\|_{\mathcal L(\cX^{2+\beta})}\le C_B,
 \qquad 0\le t_j-t_i\le\Delta_0.
\end{equation}
The constants are uniform for all sufficiently small $h$ and all block
locations.
\end{proposition}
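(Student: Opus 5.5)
The plan is to write each path-averaged operator as a perturbation of a frozen principal step and apply \cref{lem:micro-duhamel}, with spatial localization handling the variable coefficients.

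\textbf{Step 1: decomposition of $A_{\ell,h}$.} Using \cref{prop:exact-linearization} and averaging \eqref{eq:DT-exact} along the segment in \eqref{eq:path-averaged-A}, I would write
\[
A_{\ell,h}=P^{\alpha}_{t_\ell,t_{\ell+1}}+h\,\overline{\bR\bM^{-1}\bS},
\]
where the bar denotes the $\theta$-average. By \cref{cor:tube-lipschitz}, replacing the averaged coefficients by their values along the smooth reference $\bar U_{\ell+1}$ costs an operator whose $\mathcal L(\cX^{2+\beta},\cX^\beta)$ norm, after multiplication by $h$, is $O(h\rho_0)$. Its $H$-norm is recovered from one factor $P_h$, which by \eqref{eq:parabolic-smoothing} gains two derivatives at cost $h^{-1}$. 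This term is therefore of type $K^{(2)}$ with $b_2\le C\rho_0$.

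\textbf{Step 2: localization and freezing.} Take a partition of unity $(\chi_k)$ subordinate to balls of radius $r$. On each ball, freeze the reference coefficients at $(t_j,x_k,a_k)$, where $a_k$ is the exact discrete optimizer. This gives the frozen model $\bar E_h=P_h(I+h\bN_h)+hP_h\bK_{0,h}$ of \cref{prop:nilpotent}. The difference between the localized true linearization and $\bar E_h$ splits into three parts:
\begin{itemize}
\item Coefficient oscillation, with $|x-x_k|\le r$ and $|t-t_j|\le\Delta_0$. This is small in $H\to H$ relative to a two-derivative gain, and it is $O(h)$ in $H\to L$ by local Gaussian comparison. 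It is of type $K^{(2)}$ with $b_2\le C(r^\beta+\Delta_0^{\beta/2})$.
\item Commutators with $\chi_k$. These lower the differential order by one and give type $K^{(1)}$, with constants depending on $r$.
\item The zeroth-order term $hP_h\bK_{0,h}$ together with the Gaussian tails. These are of type $K^{(0)}$, with $d_2$-type contributions controlled in the supremum norm $Y$.
\end{itemize}

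\textbf{Step 3: discrete Duhamel and absorption.} Writing $\Pi=A_{i,h}\cdots A_{j-1,h}$, the discrete Duhamel identity gives
\[
\Pi u=\mathscr E_h^{j-i}u+\sum_\ell \mathscr E_h^{\ell-i}K_\ell(\cdots)u,
\]
summed over the localized pieces. Uniform frozen power bounds from \eqref{eq:frozen-dyadic} control the first term. Set $X=\max\|\Pi_{\ell}u\|_H$. \cref{lem:micro-duhamel} then gives
\[
X\le C\|u\|_H+C\bigl(b_2X+d_2Y+b_1\sqrt{\Delta_0}X+b_0\Delta_0X\bigr).
\]
The constants are fixed in order, so that the absorption closes:
\begin{itemize}
\item first $r$ and $\Delta_0$ small, so that $b_2$ is small;
\item then $\rho_0$ small;
\item then $\Delta_0$ smaller again, which makes $b_1\sqrt{\Delta_0}$ small even though $b_1$ depends on $r$.
\end{itemize}
The weak-norm quantity $Y$ is bounded separately. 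The $C_b^0$ stability of each step, with constant $1+Ch$, follows from the $\mathcal L(\cX^{2+\beta},\cX^\beta)$ bound on the feedback term together with interpolation; it gives $Y\le C\|u\|_H$ over the block. Absorbing the $X$ terms on the right-hand side yields \eqref{eq:tube-block-stability}.

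\textbf{Main obstacle.} The hard part is Step 2. The frozen model must approximate the true variable-coefficient feedback $h\bR\bM^{-1}\bS$ with an error that is genuinely small in the strong norm, not merely bounded. The nilpotence of \cref{prop:nilpotent} only holds at the frozen point. Away from it, $\mathbf w_h\bR$ is no longer zero but is only $O(r^\beta)$, and the resulting second-order residue must be shown to fit the $K^{(2)}$ two-norm bounds. My first approach would be to compare kernels through local Gaussian estimates on the interval $[t_j,t_j+\Delta_0]$, tracking the Hölder dependence of $\mathbf w\bR$ on $x$ by means of the first-order condition \eqref{eq:FOC-general} applied at each point.
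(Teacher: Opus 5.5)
Your architecture matches the paper's. The path-averaged-versus-reference error is treated as $K^{(2)}$ with $b_2$ of order $\rho$. Freezing and localization put the coefficient oscillation in $K^{(2)}$, the cutoff commutator in $K^{(1)}$ and the $D_{22}^2G$ term in $K^{(0)}$. Then come \cref{lem:micro-duhamel} and absorption with ordered constants. You also do not need $\mathbf w\bR$ to be small away from the center. The frozen step keeps exact nilpotence from the discrete first-order condition at $x_k$, and the coefficient oscillation enters only as a perturbation in the two-norm classes.

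\textbf{The gap: your bound on $Y$.} The claim that each step is $C_b^0$-stable with constant $1+Ch$ is false. The feedback term $h\bR\bM^{-1}\bS W$ contains $h\cdot h^{-1}D_aP^a$ acting on $W$, which is essentially $hP_h\Lambda$ with $\Lambda$ of second order. This operator has $\mathcal L(C_b^0)$-norm of order one, not $O(h)$; already in the frozen model, $hP_h\bN_h$ is an $O(1)$ operator on $C_b^0$. The $\mathcal L(\cX^{2+\beta},\cX^\beta)$ bound you invoke is $O(h)$ only relative to $\norm{W}_H$, and no interpolation replaces $\norm{W}_H$ by $\norm{W}_0$. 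What is true is $\norm{W_i}_0\le\norm{W_{i+1}}_0+Ch\norm{W_{i+1}}_H$, hence $Y\le\norm{W_M}_0+C\Delta X$. So $Y$ is coupled to the unknown $X$, and the bound $Y\le C\norm{u}_H$ is circular. With the correct bound, the term $d_2Y$ feeds back $Cd_2\Delta X$. Absorption then closes only if $\Delta_0$ is chosen after everything $d_2$ depends on, so that $d_2\Delta_0$ is small. Your final shrinking of $\Delta_0$ can do this, but the closure must be restructured around it.

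\textbf{Where $d_2$ comes from.} This is why the size and origin of $d_2$ matter, and here Step 2 is inaccurate. The oscillation error is not a small multiple of $\norm{u}_H$ alone. The multiplier $\chi_k(C(x)-C(x_k))$, built from a reference coefficient $C$, is small in supremum, of size $\delta=r^\beta+\Delta^{\beta/2}$, but only $O_r(1)$ in $C_b^{2+\beta}$. The tame product inequality therefore gives $C\delta\norm{u}_H+C_r\norm{u}_0$ in $H$ and $Ch\delta\norm{u}_H+C_rh\norm{u}_{C_b^2}$ in $L$. Interpolating $\norm{u}_{C_b^2}$ then yields $b_2\le C\delta+\varepsilon$ with $d_2=C_{r,\varepsilon}$. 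Recovering the global $H$-norm from the localized pieces with an $r$-independent principal constant adds a further $C_rY$ term (distant pairs, via interpolation).

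\textbf{Step 1.} The $H$-bound does not follow from \cref{cor:tube-lipschitz} plus a smoothing factor. The differences $\alpha[U]-\alpha[V]$, $\bR_U-\bR_V$ and $\bM_U^{-1}-\bM_V^{-1}$ multiply pointwise after the transition. For data merely in the $H$-neighborhood they are uniformly $O(\rho)$ only in $C_b^\beta$; their $C_b^{1+\beta}$ and $C_b^{2+\beta}$ norms are controlled only by $\rho h^{-1/2}$ and $\rho h^{-1}$. The scaled norm $\norm{\cdot}_{\beta,h}$ of \cref{lem:scaled-tube} is what balances these losses. The transition part $P^{\alpha[U]}-P^{\alpha[V]}$, absent from your decomposition, needs the same treatment. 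After cutoff multiplication this contribution to $b_2$ is $C_r\rho$, which is why $\rho_0$ must be fixed after $r$.
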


Thus any number of steps within a sufficiently short time block amplifies a perturbation by at most $C_B$, independently of the mesh. This is the stability input in \cref{sec:main-proof}: combined with the $O(h^2)$ one-step residual proved in \cref{sec:consistency}, it yields the global $O(h)$ error.

\begin{proof}
Appendix~\ref{app:localization} supplies the technical comparisons. Control-sensitivity and optimizer estimates bound the change from perturbed data to the reference solution; Gaussian comparison and spatial localization then express the remaining difference from the frozen model in the classes \eqref{eq:two-norm-classes}. In \eqref{eq:localized-two-norm-sizes}, the leading size $b_2$ becomes small by choosing the localization radius, continuation neighborhood, and block duration small; the lower-order sizes remain bounded. Averaging along the continuation segment preserves these norm bounds even though it need not preserve nilpotence. Lemma~\ref{lem:micro-duhamel} controls the accumulated remainders. Finally, the low-norm estimate $Y\le C(\|W_M\|_H+\Delta X)$ closes the strong-norm bound by absorption, as shown at the end of Appendix~\ref{app:localization}, and proves \eqref{eq:tube-block-stability} uniformly in $h$.
\end{proof}

\section{Consistency of the discrete equilibrium recursion}
\label{sec:consistency}

Section~\ref{sec:block-stability} controls the amplification of existing continuation errors. Here we measure the new error introduced by one discrete equilibrium step even when its input is exact: inserting the continuous continuations at $t+h$ yields an optimizer error of $O(h)$ and a continuation residual of $O(h^2)$. In \cref{sec:main-proof}, the error recursion $e_i=A_{i,h}e_{i+1}+r_i^h$ combines the block stability of Section~\ref{sec:block-stability} with the residual bound proved here. Over $T/h$ steps this gives an $O(h)$ global continuation error; the decision-rule and value estimates follow. Consistency alone would not control error amplification, while stability alone would not quantify the new errors.


For $t+h\le T$ define the one-step objective obtained by inserting the exact continuous continuation values at time $t+h$:
\begin{equation}
\label{eq:Qbar-consistency}
  \overline Q_h(t,x,a)
  :=P_{t,t+h}^a[f(t+h,\cdot;x)](x)
  +G\!\left(x,P_{t,t+h}^a[\Id+\tgb(t+h,\cdot)](x)\right).
\end{equation}
This is the current player's one-step payoff when the continuations at $t+h$ are taken from the exact continuous equilibrium.
Let
$a_h^\star(t,x):=\argmax_{a\in A}\overline Q_h(t,x,a).$
The action $a_h^\star(t,x)$ is optimal for these exact continuation data and is held constant over the step. The next proposition quantifies the resulting discrepancy from the continuous equilibrium: $O(h)$ for the action and $O(h^2)$ for the continuation update.

\begin{proposition}[Second-order one-step consistency]
\label{prop:consistency}
Under \cref{ass:H1,ass:H2,ass:H3,ass:H4,ass:H5,ass:H6,ass:H7,ass:H8}, there exist $h_0>0$ and $C<\infty$ such that for $0<h<h_0$ and $t+h\le T$:
\begin{enumerate}[label=\textup{(\roman*)}]
\item $a_h^\star(t,x)$ is the unique global maximizer of \eqref{eq:Qbar-consistency}, remains uniformly in the interior of $A$, and
\begin{equation}
\label{eq:a-star-rate}
  \|a_h^\star(t,\cdot)-\whalpha(t,\cdot)\|_{C_b^{2+\beta}}
  \le Ch.
\end{equation}
\item The continuous solution has local residual
\begin{equation}
\label{eq:consistency-residual}
  \|\cT_{t,h}(U(t+h))-U(t)\|_{\cX^{2+\beta}}
  \le Ch^2.
\end{equation}

\end{enumerate}
\end{proposition}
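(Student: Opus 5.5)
The plan is to expand the exact one-step objective $\overline Q_h$ to second order in $h$ with remainders controlled in $C_b^{2+\beta}$ uniformly in $a$. Then transfer the expansion to the optimizer through the implicit-function theorem, and to the continuation update through a Taylor argument in the action. Write $\bar f=f(t+h)$ and $\bar g=\Id+\tgb(t+h)$.

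\emph{Expansion of the objective.} By Dynkin's formula applied twice, as in \eqref{eq:identity-dynkin}, for fixed $a$
\[
 P_{t,t+h}^a\varphi=\varphi+hL_t^a\varphi+\int_t^{t+h}\!\!\int_t^s P_{t,r}^a\bigl(L_r^aL_s^a+\partial_sL_s^a\bigr)\varphi\,\dd r\,\dd s .
\]
With $\varphi=\bar f(\cdot;x)$ or $\bar g$, the double integral is bounded in $C_b^{2+\beta}$ by $Ch^2\|\varphi\|_{C_b^{6+\beta}}$. The same holds in the mixed norms by \cref{cor:aux-param}, and for the identity part by \cref{cor:affine-data}. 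This is where the six state derivatives and the six action derivatives of \cref{ass:H2,ass:H4,ass:H5} are used: the bound must survive up to three $a$-derivatives, via \cref{lem:S} with $r=2$. Next expand $\bar f=f(t)-h\partial_tf(t)+O(h^2)$ using $f\in C^2([0,T];\cMix^{6+\beta,2+\beta})$, and Taylor-expand $G$ in its second argument around $g(t,x)$. This gives
\[
 \tfrac1h\bigl(\overline Q_h(t,x,a)-\overline Q_h(t,x,\whalpha)\bigr)=\cH(t,x,a)-\cH(t,x,\whalpha(t,x))+h\,\mathcal R_h(t,x,a),
\]
with $\mathcal R_h$ and its first two $a$-derivatives bounded in $C_b^{\beta}$, and in fact in $C_b^{2+\beta}$ in $x$. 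The zeroth-order terms $f(t,x;x)+G(x,g(t,x))$ do not depend on $a$.

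\emph{Part (i).} By \cref{ass:H7}, $\tfrac1hD_{aa}^2\overline Q_h=D_{aa}^2\cH+O(h)\le-\tfrac{\lambda_H}2I_m$ for small $h$, so the scaled objective is uniformly strongly concave on the convex set $A$. It is therefore $\tfrac{\lambda_H}{4}|a-\whalpha|^2$-separated from its value at $\whalpha$ up to $O(h)$. Hence every global maximizer lies within $O(\sqrt h)$, and then, by concavity, within $O(h)$ of $\whalpha(t,x)$. Together with \cref{ass:H8}, this shows it is unique and interior for small $h$. The first-order condition $\Gamma_h(x,a_h^\star)=0$, with $\Gamma_h=\tfrac1hD_a\overline Q_h=D_a\cH+hD_a\mathcal R_h$ and $D_a\cH(x,\whalpha)=0$, gives
\[
 a_h^\star-\whalpha=-\Bigl(\int_0^1 D_a\Gamma_h\bigl(x,\whalpha+\theta(a_h^\star-\whalpha)\bigr)\dd\theta\Bigr)^{-1}hD_a\mathcal R_h(x,\whalpha).
\]
The $C_b^{2+\beta}$ bound \eqref{eq:a-star-rate} follows from the composition and inverse estimates in $C_b^{2+\beta}$, as in the proof of \cref{cor:tube-lipschitz} but one level higher. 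This needs $\Gamma_h$ in $C^{2+\beta}$ in $x$ jointly with two $a$-derivatives, which \cref{lem:S} with $r=2$ supplies.

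\emph{Part (ii).} Split the residual as
\[
 \cT_{t,h}(U(t+h))-U(t)=\bigl[\Phi_{t,h}(U(t+h),a_h^\star)-\Phi_{t,h}(U(t+h),\whalpha(t))\bigr]+\bigl[\Phi_{t,h}(U(t+h),\whalpha(t))-U(t)\bigr].
\]
The first bracket equals $h\int_0^1\bR_{t,h}[U(t+h),a_\theta]\dd\theta\,(a_h^\star-\whalpha)$. Apply \cref{lem:S} with $r=2$, using $\cMix^{4+\beta}$ input regularity, to bound $\bR$ in $\cX^{2+\beta}$ by a constant. Combined with part (i), this gives $O(h\cdot h)$. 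One subtlety is that the action is a function of $x$, so $\Phi$ with a feedback $a(x)$ must be viewed as $x\mapsto P^{a}\varphi(x)|_{a=a(x)}$. Its $C^{2+\beta}$ norm then involves $x$-derivatives of $a(\cdot)$, which are controlled by \eqref{eq:a-star-rate}.

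The second bracket is a local consistency error of the continuous equation. The continuous solution satisfies $U(t)=P^{\whalpha}_{t,t+h}U(t+h)$ only with the continuously updated feedback, not with an action frozen at $\whalpha(t,x)$. Expanding both sides, the $O(h)$ terms cancel by \eqref{eq:f-continuation-pde}--\eqref{eq:g-continuation-pde}, and what remains is the second-order Dynkin remainder plus $h^2\partial_t^2$ terms. Both are bounded in $\cX^{2+\beta}$ by $Ch^2$ because of the four extra state derivatives in \cref{ass:H5}.

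The main obstacle is making the remainder $\mathcal R_h$ and the second bracket uniform in the strong norm $C_b^{2+\beta}$ when the action is substituted as an $x$-dependent feedback. Here derivatives falling on $\whalpha(t,\cdot)$ and on the frozen-action transition must be counted carefully. I would handle this by first proving all bounds for constant $a$ with $a$-derivatives up to order three, and then using the chain rule for $x\mapsto\Psi(x,a(x))$ in $C_b^{2+\beta}$.
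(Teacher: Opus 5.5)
Your proposal is correct and follows essentially the paper's proof. The shared steps are: a second-order Dynkin expansion showing that $h^{-1}$ times the action-dependent part of $\overline Q_h$ is $\cH+O(h)$ with action derivatives; persistence of strong concavity, with a preliminary $O(\sqrt h)$ bound giving interiority; the first-order condition for the $C_b^{2+\beta}$ rate; and cancellation of the $O(h)$ term by the continuation equations, so that only $h\cdot O(h)$ from the action change remains. For the rate, the paper interpolates the objectives via $q_\theta=\cH+\theta h\Rem_{Q,h}$, whereas you average the Hessian along the action segment; both need an a priori uniform $C_b^{2+\beta}$ bound on $a_h^\star$ obtained by differentiating the first-order condition.

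Only minor slips need fixing:
\begin{itemize}
\item $f(t+h)=f(t)+h\partial_tf(t)+O(h^2)$ carries a plus sign, which is essential for the cancellation in (ii).
\item The action derivatives of the double-integral remainder are controlled by the zero-order bound \eqref{eq:action-zero-order}, not by \cref{lem:S} with $r=2$, which would need $C_b^{8+\beta}$ data.
\item Composing $D_a\mathcal R_h$ with $\whalpha$ in $C_b^{2+\beta}$ uses four, not three, action derivatives of the remainder.
\end{itemize}
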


\paragraph{Proof idea}
A short-time expansion of the exact transition expresses the action-dependent part of the one-step objective as $h\cH(t,x,a)$ plus an $O(h^2)$ remainder, with the required derivative bounds. After division by $h$, strong concavity and interiority give an $O(h)$ difference between $a_h^\star$ and $\whalpha$; differentiating the optimality condition yields the stated spatial norm bound. Finally, the continuous continuation equations cancel the first-order term at $\whalpha$. Replacing $\whalpha$ by $a_h^\star$ changes that term by only $h\,O(h)$, leaving an $O(h^2)$ continuation residual. This is the local error accumulated, under block stability, in the main $O(h)$ convergence proof. The full estimates are given in \cref{app:consistency-proof}.

\section{Proof of the main theorem}
\label{sec:main-proof}

We now combine the two ingredients established above. Section~\ref{sec:consistency} gives an $O(h^2)$ residual when the exact continuous continuations are inserted into one discrete step; Section~\ref{sec:block-stability} controls the propagation of existing errors through products of the averaged linearizations. Their combination in $e_i=A_{i,h}e_{i+1}+r_i^h$, starting from the exact terminal data $e_N=0$, yields an $O(h)$ error over the full grid. Since these estimates hold only near the continuous solution, we construct the discrete equilibrium backwards and use a first-exit argument to show that it never leaves this neighborhood. Unique one-step maximizers then give subgame perfection and uniqueness; the policy and value bounds follow from the continuation estimate.

\begin{proof}[Proof of \cref{thm:main}]
The errors below are defined whenever the corresponding discrete continuations have been constructed.
Write $\bar U_i=U(t_i)$, $e_i=U_i^h-\bar U_i$, and
$\|\cdot\|_{\cX}=\|\cdot\|_{\cX^{2+\beta}}$.

\paragraph{Step 1: well-defined updates in a fixed neighborhood}
We first ensure that nearby continuation data yield a unique interior action, so that each backward update is defined.
By \cref{prop:consistency,cor:tube-lipschitz}, for small $h$ and
$\|V-\bar U_{i+1}\|_{\cX}\le\rho_{\rm s}$,
\[
 \bM_{i,h}[V](x,a)\ge\tfrac12\lambda_H I_m,\qquad
 \|\Gamma_{i,h}[V](\cdot,\whalpha(t_i,\cdot))\|_\infty
       \le C(h+\rho_{\rm s}).
\]
Thus the objective is strongly concave on $A$. Its unique maximizer
satisfies $|\alpha_{i,h}[V](x)-\whalpha(t_i,x)|\le C(h+\rho_{\rm s})$
by strong concavity and the variational inequality for a constrained
maximizer. Taking $h,\rho_{\rm s}$ small preserves an interior margin
$\eta/2$. The update is therefore defined throughout this neighborhood, and
\eqref{eq:alpha-lipschitz} controls its policy map.

\paragraph{Step 2: consistency residual and stable error propagation}
We now combine the residual from Section~\ref{sec:consistency} with the averaged linearizations of Section~\ref{sec:block-stability}, using blocks of uniformly bounded duration.
Let $\rho_0,\Delta_0,C_B$ be as in \cref{prop:tube-stability}, with
$C_B\ge1$, and fix $0<\rho<\frac12\min\{\rho_{\rm s},\rho_0\}$.
For $h\le\Delta_0/2$, partition the grid into blocks
$0=n_0<\cdots<n_J=N$ of at most $\lfloor\Delta_0/h\rfloor$ steps,
with $J\le J_0$ independently of $h$.
The residual $r_i^h=\cT_{i,h}(\bar U_{i+1})-\bar U_i$ satisfies
$\|r_i^h\|_{\cX}\le C_\tau h^2$ by \cref{prop:consistency}.

Whenever $\|e_{i+1}\|_{\cX}<\rho$, the next value
$U_i^h=\cT_{i,h}(U_{i+1}^h)$ exists and the mean-value formula gives
\begin{equation}
\label{eq:main-error-recursion}
 e_i=A_{i,h}e_{i+1}+r_i^h,\qquad
 A_{i,h}=\int_0^1D\cT_{i,h}(\bar U_{i+1}+\theta e_{i+1})\,\dd\theta.
\end{equation}
For $i\le\ell$ put $\cE_{i,\ell}^h=A_{i,h}\cdots A_{\ell-1,h}$,
with $\cE_{i,i}^h=I$. On a block whose input errors remain in the neighborhood,
\cref{prop:tube-stability} gives $\|\cE_{i,\ell}^h\|\le C_B$, hence
\begin{equation}
\label{eq:main-variation-constants}
 e_i=\cE_{i,m}^he_m+\sum_{\ell=i}^{m-1}\cE_{i,\ell}^hr_\ell^h.
\end{equation}

\paragraph{Step 3: backward construction and exclusion of an exit}
The terminal continuations agree exactly. We construct earlier steps successively and show that the stability and residual bounds prevent the error from reaching the neighborhood boundary.
Set $C_J=0$ and $C_k=C_B(C_{k+1}+C_\tau\Delta_0)$ for $k<J$.
All these constants are bounded by
$C_*=C_\tau\Delta_0\sum_{\ell=1}^{J_0}C_B^\ell$.
Choose $h_0$ so that $C_*h_0<\rho/2$ and all preceding estimates apply.
Starting from $e_N=0$, suppose by backward induction that the right endpoint
of block $k$ satisfies $\|e_{n_{k+1}}\|_{\cX}\le C_{k+1}h$.
If $j$ were the first backward exit in that block, then $U_j^h$ would
still be defined, since its input $U_{j+1}^h$ is inside the neighborhood.
All operators between $j$ and $n_{k+1}$ satisfy the block bound,
so \eqref{eq:main-variation-constants} gives
\[
 \|e_j\|_{\cX}
 \le C_B\bigl(C_{k+1}h+C_\tau(n_{k+1}-j)h^2\bigr)
 \le C_kh\le C_*h<\rho/2,
\]
contradicting an exit. The same estimate holds at every index of the block.
Induction constructs the equilibrium on the entire grid and proves
\begin{equation}
\label{eq:continuation-rate-proof}
 \max_{0\le i\le N}\|U_i^h-\bar U_i\|_{\cX}\le C_*h<\rho.
\end{equation}

\paragraph{Step 4: equilibrium, uniqueness, and policy and value convergence}
Having constructed the continuations on the full grid, we verify the game-theoretic claims and transfer their error bound to actions and values.
By \eqref{eq:discrete-Q}, each constructed policy is the unique best
response to the later policies, so the recursion is subgame perfect.
Any other equilibrium of \cref{def:discrete-spe} has the same terminal
data and hence the same unique maximizer at $N-1$. Backward induction
identifies its policies and continuation values at every date.

To transfer the continuation error to the policy, introduce the intermediate
policy $a_i^\star:=\alpha_{i,h}[\bar U_{i+1}]$, obtained by solving the discrete
optimization problem with the exact continuous continuation data. The triangle
inequality separates the effect of the continuation error from the error of this
single discrete optimization step:
\begin{align*}
 \|\alpha_i^h-\whalpha(t_i)\|_{C_b^0}
 &\le \|\alpha_{i,h}[U_{i+1}^h]-\alpha_{i,h}[\bar U_{i+1}]\|_{C_b^0}
       +\|a_i^\star-\whalpha(t_i)\|_{C_b^0}\\
 &\le C\|e_{i+1}\|_{\cX}+Ch\le Ch.
\end{align*}
Here \cref{cor:tube-lipschitz} controls the first term by Lipschitz continuity
of the optimizer, while \cref{prop:consistency} controls the second term;
\eqref{eq:continuation-rate-proof} then gives the final bound.

For the value function, recall that
$V_i^h(x)=f_i^h(x;x)+G(x,x+\tgb_i^h(x))$, with the analogous formula
for $V(t_i,x)$. The diagonal bound \eqref{eq:diag-bound} controls the
$f$-difference, and the mean value theorem with bounded $D_2G$ controls the
$G$-difference. Since the identity part of $g$ cancels, this yields
\[
 \|V_i^h-V(t_i)\|_{C_b^0}
 \le C\|f_i^h-f(t_i)\|_{\cMix^{2+\beta,2+\beta}}
      +C\|\tgb_i^h-\tgb(t_i)\|_{C_b^0}
 \le C\|e_i\|_{\cX}\le Ch.
\]
These are the remaining claims of \cref{thm:main}.
\end{proof}

\appendix

\section{Proof of the normalized control sensitivities}
\label{app:sensitivity}

We prove \cref{lem:S} using the uniform parabolic estimate
\eqref{eq:parabolic-smoothing}. All constants below depend only on the
coefficient bounds, ellipticity, $T$, $A$, and $\beta$, and are independent of
$a,t,h$. Put $B_j(s,a)=D_a^jL_s^a$, $1\le j\le6$. For control directions of
norm at most one, these are uniformly bounded operators from
$C_b^{\gamma+2}$ to $C_b^\gamma$ at all exponents used below.

\paragraph{Duhamel representation}
Duhamel's formula expresses action derivatives of the transition through derivatives of its generator; parabolic smoothing then controls their short-time size.

For $1\le\ell\le6$ and second-order differential insertions $C_1,\ldots,C_\ell$,
write
\begin{equation}
\label{eq:app-insertion}
 I_\ell(C_1,\ldots,C_\ell)
 :=\int_{t<s_1<\cdots<s_\ell<t+h}
 P_{t,s_1}^a C_1(s_1)P_{s_1,s_2}^a\cdots
 C_\ell(s_\ell)P_{s_\ell,t+h}^a\,\dd s_1\cdots\dd s_\ell.
\end{equation}
The time gaps are the intervals between consecutive integration times, together with the outer intervals $(t,s_1)$ and $(s_\ell,t+h)$; we distribute the required smoothing over these gaps.

For smooth bounded terminal data $\varphi$, put $u^a(s)=P_{s,t+h}^a\varphi$.
The evolution equation is the backward Kolmogorov equation
\[
 \partial_su^a+L_s^au^a=0,\qquad u^a(t+h)=\varphi.
\]
Differentiating in a fixed control direction $\eta$ gives, for $v(s)=D_au^a(s)[\eta]$,
\[
 \partial_sv+L_s^av=-B_1(s,a)[\eta]u^a(s),\qquad v(t+h)=0.
\]
Differentiating the generator produces $B_1=D_aL^a$; variation of constants gives
\begin{equation}
\label{eq:app-D1}
 D_aP_{t,t+h}^a[\eta]
 =\int_t^{t+h}P_{t,s}^a B_1(s,a)[\eta]P_{s,t+h}^a\,\dd s.
\end{equation}
The notation $\eta^{\otimes j}$ means that the same direction $\eta$ is inserted into all $j$ arguments of $D_a^jP^a$ or $B_j$.
Repeated differentiation yields, for every $k\le6$,
\begin{equation}
\label{eq:app-Dk}
 D_a^kP_{t,t+h}^a[\eta^{\otimes k}]
 =\sum_{\ell=1}^k\ \sum_{\substack{j_1+\cdots+j_\ell=k\\j_\nu\ge1}}
 \frac{k!}{j_1!\cdots j_\ell!}
 I_\ell\bigl(B_{j_1}[\eta^{\otimes j_1}],\ldots,
 B_{j_\ell}[\eta^{\otimes j_\ell}]\bigr),\qquad k\le6.
\end{equation}
Each differentiation either acts on an existing insertion or inserts $B_1$
into one of the evolution factors. This proves \eqref{eq:app-Dk} by induction.
Since these derivatives are symmetric multilinear maps, polarization recovers mixed directions from linear combinations of repeated-direction evaluations at sums of directions.
We first derive these identities for smooth bounded data; see \cite[Sec.~6.2, Prop.~6.2.2 and Cor.~6.2.4]{Lunardi1995} for variation of constants.

\paragraph{Passage to the full H\"older spaces}
We extend the identities to every function in the stated H\"older spaces, where smooth functions need not be dense in the original H\"older norm.
For $\varphi\in C_b^{r+2+\beta}$, let $\varphi_n=\rho_n*\varphi$ be its standard smooth averages.
Their original H\"older norms remain uniformly bounded, and $\varphi_n\to\varphi$ in $C_b^{r+2+\beta'}$ for every $0<\beta'<\beta$.
The time-ordered integral estimates below at exponent $\beta'$ give convergence of the insertion integrals, and passage to the limit in the action-variable fundamental-theorem-of-calculus identities identifies the pointwise control derivatives through order six.
The bounds at exponent $\beta$ then pass to the limit by convergence of spatial derivatives and the uniform bounds on their H\"older difference quotients.
The zero-order estimates follow identically from uniformly bounded mollifications in $C_b^s$ converging in $C_b^{s-\delta}$, where $s\in\{\beta,1+\beta,2+\beta\}$ and $0<\delta<\beta$.

\paragraph{Estimate of the time-ordered integrals}
Fix $r\in\{0,2\}$ and $0<\varepsilon<\beta/6$. Starting from the right in
\eqref{eq:app-insertion}, the first insertion maps $C_b^{r+2+\beta}$ to
$C_b^{r+\beta}$. Across each of the $\ell-1$ interior gaps recover
$2-\varepsilon$ derivatives before applying the next insertion. The remaining
$(\ell-1)\varepsilon$ derivatives are recovered across the outermost gap.
The lowest intermediate exponent is $r+\beta-(\ell-1)\varepsilon>0$.
Each interior smoothing step contributes $(s_{j+1}-s_j)^{-1+\varepsilon/2}$, whereas the outermost step contributes $(s_1-t)^{-(\ell-1)\varepsilon/2}$.
The rightmost transition is bounded without smoothing; composing these estimates with the bounds $C_j:C_b^{\gamma+2}\to C_b^\gamma$ yields the integrand below.
Hence \eqref{eq:parabolic-smoothing} gives
\begin{align}
\label{eq:app-simplex-bound}
 \|I_\ell(C_1,\ldots,C_\ell)\varphi\|_{C_b^{r+\beta}}
 &\le C\|\varphi\|_{C_b^{r+2+\beta}}
 \int_{t<s_1<\cdots<s_\ell<t+h}
 (s_1-t)^{-(\ell-1)\varepsilon/2}\notag\\
 &\quad\times
 \prod_{j=1}^{\ell-1}(s_{j+1}-s_j)^{-1+\varepsilon/2}
 \,\dd s_1\cdots\dd s_\ell
 \le Ch\|\varphi\|_{C_b^{r+2+\beta}}.
\end{align}
For $\ell=1$, the product is empty and the integral equals $h$.
For $2\le\ell\le6$, every singular exponent is less than one and their sum is $\ell-1$.
The substitution $s_j=t+hu_j$ leaves a finite integral over $0<u_1<\cdots<u_\ell<1$, multiplied by $h^\ell h^{-(\ell-1)}=h$.
For two insertions, the smoothing gains on the interior and leftmost gaps are $(2-\varepsilon)+\varepsilon$; for three insertions they are $(2-\varepsilon)+(2-\varepsilon)+2\varepsilon$.
This allocation avoids recovering two full derivatives on an interior gap, which would produce a nonintegrable factor $(s_{j+1}-s_j)^{-1}$.

Equations \eqref{eq:app-Dk} and \eqref{eq:app-simplex-bound} imply
\eqref{eq:lemma-S-main} after division by $h$. For $j=1,\ldots,5$, the identity
\[
 D_a^jP^a-D_a^jP^{\widetilde a}
 =\int_0^1D_a^{j+1}P^{\widetilde a+\theta(a-\widetilde a)}
       [a-\widetilde a]\,\dd\theta
\]
and convexity of $A$ give \eqref{eq:lemma-S-lipschitz-a}. The auxiliary-parameter
and affine-data consequences are proved with their statements in
\cref{subsec:normalized-sensitivities}.

\paragraph{Zero-order bounds and the scaled estimate}
Using the representation \eqref{eq:app-Dk}, we now estimate the operators on $C_b^s$ without additional regularity of the input:
\begin{equation}
\label{eq:action-zero-order}
 \|D_a^kP_{t,t+h}^a\|_{\mathcal L(C_b^s)}\le C,
 \qquad 1\le k\le6,\quad s=\beta,1+\beta,2+\beta.
\end{equation}
For a term with $\ell$ insertions, recover $2-\varepsilon$ derivatives across the rightmost time gap $(s_\ell,t+h)$ as well as each interior gap $(s_j,s_{j+1})$.
Each subsequent insertion loses two derivatives, leaving regularity $s-\ell\varepsilon>0$ when $0<\varepsilon<\beta/6$.
Recover the missing $\ell\varepsilon$ derivatives across the leftmost gap $(t,s_1)$.
All $\ell+1$ gap factors are integrable: their singular exponents are less than one and sum to $\ell$.
Rescaling $s_j=t+hu_j$ therefore gives $h^\ell h^{-\ell}=1$, explaining why this bound has no factor $h$.

For $u\in C_b^{2+\beta}$, put $v=D_a^kP_{t,t+h}^a u$.
Lemma~\ref{lem:S} and \eqref{eq:action-zero-order} with $s=2+\beta$ give, respectively,
\[
 \|v\|_{C_b^\beta}\le Ch\|u\|_{C_b^{2+\beta}},
 \qquad
 \|v\|_{C_b^{2+\beta}}\le C\|u\|_{C_b^{2+\beta}}.
\]
The interpolation inequality between these two output norms yields
\[
 \|v\|_{C_b^{1+\beta}}
 \le C\|v\|_{C_b^\beta}^{1/2}\|v\|_{C_b^{2+\beta}}^{1/2}
 \le C\sqrt h\|u\|_{C_b^{2+\beta}}.
\]
To combine these bounds, define, for scalar, vector, or operator-valued $v$,
\begin{equation}
\label{eq:scaled-holder-norm}
 \|v\|_{\beta,h}:=\|v\|_{C_b^\beta}
       +\sqrt h\|v\|_{C_b^{1+\beta}}+h\|v\|_{C_b^{2+\beta}}.
\end{equation}
Taking the weighted sum of these three bounds gives
\[
 \|D_a^kP_{t,t+h}^a u\|_{\beta,h}
 \le Ch\|u\|_{C_b^{2+\beta}},\qquad 1\le k\le6.
\]
This estimate will be used in \cref{lem:scaled-tube}.

\section{Proof of the two-norm block estimate}
\label{app:micro-duhamel}

\begin{proof}[Proof of \cref{lem:micro-duhamel}]
Set $R=b_2X+d_2Y$ and, for $q\ge0$, $\lambda_q=2^{2q}$.
We weight each frequency block by $2^{q(2+\beta)}$, as in the definition of the $H$ norm.
The two bounds for $K^{(2)}$ imply
\[
 2^{(2+\beta)q}\|\Delta_qK_{\ell,h}^{(2)}u_{\ell+1}\|_\infty
 \le CR\min\{1,h\lambda_q\}.
\]
The frozen evolution commutes with $\Delta_q$, so \eqref{eq:frozen-dyadic} applies to each output block of the perturbation.
Writing $n=\ell-i$ for the number of propagation steps and extending the sum to $M-1$, the weighted $q$-block of the convolution is bounded by
\[
 CR\min\{1,v\}\sum_{n=0}^{M-1}(1+nv)e^{-cnv}\le C R,
 \qquad v=h\lambda_q.
\]
For $v\le1$, exponential damping becomes effective after order $v^{-1}$ steps; summing the geometric series and its differentiated series gives $O(v^{-1})$.
For $v\ge1$, both sums are uniformly bounded, giving $O(1)$.
This includes the unsmoothed insertion $n=0$ and makes no assumption about
frequency preservation by $K_{\ell,h}^{(2)}$.

For $K^{(1)}$ the corresponding factor is
$b_1X\min\{\sqrt h,h\sqrt{\lambda_q}\}$.
If $h\lambda_q\ge1$, its convolution is at most $Cb_1\sqrt h X\le Cb_1\sqrt\Delta X$, since $h\le\Delta$.
If $h\lambda_q<1$, it is bounded by
\[
 Cb_1Xh\sqrt{\lambda_q}\min\{M,(h\lambda_q)^{-1}\}
 \le Cb_1X\min\{\Delta\sqrt{\lambda_q},\lambda_q^{-1/2}\}
 \le Cb_1\sqrt\Delta X.
\]
The last inequality follows from $\min\{a,b\}\le\sqrt{ab}$, since the two terms have product $\Delta$.
For $K^{(0)}$, uniform boundedness of the frozen powers gives $Cb_0MhX\le Cb_0\Delta X$.
For $q=-1$, the weaker output norms control the low-frequency block directly, giving a factor $h$ per insertion.
Bounded frozen propagation and $Mh\le\Delta$ therefore give contributions at most $C\Delta R$,
$Cb_1\Delta X$, and $Cb_0\Delta X$, respectively. Since $\Delta\le1$,
these obey the asserted bounds. The supremum over $q$ completes the proof.
\end{proof}

\section{The sample-and-hold comparison and localization}
\label{app:localization}

We use the product-space convention of \cref{subsec:function-spaces}, extended to the intermediate and supremum norms by
\[
 \cX^s:=\cMix^{s,2+\beta}\times C_b^s(\R^d;\R^d),
 \qquad s\in\{0,\beta,1+\beta,2+\beta\},
\]
with the sum norm. No identification of $C_b^0$ with the order-zero Besov space is used. Thus
$H=\cX^{2+\beta}$, $J=\cX^{1+\beta}$, $L=\cX^\beta$, and $Y_0=\cX^0$.
The fixed Banach range in the notation of Section~\ref{sec:block-stability} is
$E=C_b^{2+\beta}(\R_y^d)\times\R^d$. In particular,
$\|U\|_0=\|U\|_{Y_0}$ retains the reference-parameter regularity of the $f$-component. Constants may depend on the reference solution and the standing bounds, but never on $h$.

\paragraph{Scaled norms and the optimizer}
We use the scaled norm \eqref{eq:scaled-holder-norm} introduced in \cref{app:sensitivity}.
The $C_b^\beta$ algebra inequality and the product rule give a uniform
product bound, weighting $D v D w$ as $(\sqrt h Dv)(\sqrt h Dw)$.
The chain rule likewise gives uniform bounds for $v(x,a(x))$ when
$\|a\|_{\beta,h}$ and the first three action derivatives of $v$ in this
norm are bounded; the third controls H\"older differences. For differences
of compositions, apply the same rule to $v_a$ and integrate between actions.

\begin{lemma}[Scaled estimates near the continuous solution]
\label{lem:scaled-tube}
For $U,V$ in a sufficiently small fixed $H$-neighborhood of the reference continuation pair $U(t+h)$,
\begin{gather}
\label{eq:scaled-optimizer}
 \|\alpha[U]\|_{\beta,h}\le C,\qquad
 \|\alpha[U]-\alpha[V]\|_{\beta,h}\le C\|U-V\|_H,\\
\label{eq:two-norm-tube}
 \begin{aligned}
 \|(D\cT_h(U)-D\cT_h(V))W\|_H
     &\le C\|U-V\|_H\|W\|_H,\\
 \|(D\cT_h(U)-D\cT_h(V))W\|_L
     &\le Ch\|U-V\|_H\|W\|_H.
 \end{aligned}
\end{gather}
The reference action $a_i=\alpha_{i,h}[\bar U_{i+1}]$ and the reference
coefficients $\bR_i,\bM_i^{-1},\gamma_i=D_2G(x,P^{a_i}g)$ have uniformly
bounded, unscaled $C_b^{2+\beta}$ norms. Their supremum-norm time moduli are
bounded by $C|t_i-t_j|^{\beta/2}$, with the auxiliary range retained.
\end{lemma}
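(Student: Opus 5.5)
The plan is to treat the optimizer as the solution of the scaled first-order condition $\Gamma_h[U](x,a)=0$ and to measure everything in the scaled norm \eqref{eq:scaled-holder-norm}. The point of this norm is that \cref{app:sensitivity} already gives $\|D_a^kP^a_{t,t+h}u\|_{\beta,h}\le Ch\|u\|_{C_b^{2+\beta}}$ for $k\le 6$. After division by $h$, the quantities $\Gamma$, $D_a\Gamma=-\bM$, and the higher action derivatives $D_a^j\Gamma$ ($j\le 5$) are therefore uniformly bounded in $\|\cdot\|_{\beta,h}$. This holds uniformly in $a\in A$ and for $U$ in an $H$-neighborhood of $U(t+h)$, by \cref{cor:aux-param,cor:affine-data} and the smoothness of $G$. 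Differences in $U$ are handled the same way: $\Gamma$ depends on $U$ affinely through $P^a$ and smoothly through $G$, so the same bound holds with a factor $\|U-V\|_H$.

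\textbf{Step 1: the optimizer bounds \eqref{eq:scaled-optimizer}.} Step 1 of the main proof and \cref{cor:tube-lipschitz} give a unique interior optimizer with $\bM\ge\tfrac12\lambda_H I$, together with a $C_b^\beta$ bound. Differentiating $\Gamma[U](x,\alpha(x))=0$ in $x$ gives
\[
D\alpha=\bM^{-1}D_x\Gamma\quad\text{and}\quad D^2\alpha=\bM^{-1}\bigl(D_x^2\Gamma+2D_{xa}\Gamma\,D\alpha+D_{aa}\Gamma[D\alpha,D\alpha]\bigr).
\]
Weighting by $\sqrt h$ and $h$ and using the product and chain rules in the scaled norm (weighting $D\alpha D\alpha$ as $(\sqrt h D\alpha)^2$) yields a closed implicit bound. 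Since $\sqrt h\|D\alpha\|_{C_b^\beta}$ and $h\|D^2\alpha\|$ appear at most quadratically with bounded coefficients, a continuity/bootstrap argument closes the estimate for small $h$. The Lipschitz estimate comes from the same identity for $\alpha[U]-\alpha[V]$, written as $\bM^{-1}$ applied to $\Gamma[V](\cdot,\alpha[V])-\Gamma[U](\cdot,\alpha[V])$ plus a quadratic remainder. The composition-difference rule (integrating $v_a$ between actions, which uses three action derivatives) bounds it by $C\|U-V\|_H$.

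\textbf{Step 2: the derivative differences \eqref{eq:two-norm-tube}.} Write $D\cT_h(U)W=P^{\alpha[U]}W+h\bR[U,\alpha]\bM^{-1}\bS[U,\alpha]W$ from \cref{prop:exact-linearization}. For the direct term, $P^{\alpha[U]}W-P^{\alpha[V]}W=\int_0^1 D_aP^{a_\theta}W\,[\alpha[U]-\alpha[V]]\,d\theta$ is evaluated at $a=a_\theta(x)$. The scaled bound $\|D_aP^aW\|_{\beta,h}\le Ch\|W\|_H$ and Step 1 give $C h\|U-V\|_H\|W\|_H$ in the $\beta,h$ norm. Its $L$ part is the claimed low-norm bound, and its $h\|\cdot\|_{C^{2+\beta}}$ part, divided by $h$, gives the $H$ bound. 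For the feedback term, $h\bR$, $\bM^{-1}$ and $\bS W$ are respectively $O(h)$, $O(1)$ and $O(\|W\|_H)$ in the scaled norm. Their differences in $U$ are controlled by Step 1 and \cref{cor:tube-lipschitz} lifted to the scaled norm, so the product rule gives the same two-norm structure.

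\textbf{Step 3: the reference statements.} For $U=\bar U_{i+1}$, which is $C^{6+\beta}$ in state by \cref{ass:H5}, I would rerun Step 1 with unscaled norms. The extra four spatial derivatives let \cref{lem:S} with $r=2$ bound $\Gamma,\bM$ in $C_b^{2+\beta}$ without the $h$ weights, which gives unscaled bounds for $a_i$, $\bR_i$, $\bM_i^{-1}$ and $\gamma_i$. The time moduli follow from the $\beta/2$ time regularity in \cref{ass:H2} and the $C^2$ time regularity of $U$, propagated through the implicit function theorem in sup norm.

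The main obstacle is the bootstrap in Step 1. The $C_b^{2+\beta}$ norm of $\alpha$ is only controlled after multiplication by $h$, so each chain-rule term must be checked to carry matching weights. The third action derivative in the Hölder difference must be bounded by the order-six sensitivities without losing the factor $h$.
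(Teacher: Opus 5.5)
Your outline follows the paper's proof in all its main steps: scaled bounds for $\Gamma$ and its action derivatives from \cref{app:sensitivity}; differentiating the first-order condition once and twice; the pointwise action-difference identity for the transition part of $D\cT_h$; the three-term product split of the feedback term; and \cref{lem:S} with $r=2$ for the unscaled reference coefficients.

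\textbf{Step 1 closure.} This is the step you call the main obstacle, and the justification you give for it would not work. Suppose $X=\sqrt h\|D\alpha\|_{C_b^\beta}+h\|D^2\alpha\|_{C_b^\beta}$ enters an implicit bound quadratically with $O(1)$ coefficients. A continuity argument does not in general close such a bound. Taking $h$ small does not help either: the weights $\sqrt h$ and $h$ exactly cancel the losses $h^{-1/2}$ and $h^{-1}$ of the score, so nothing becomes small.

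No bootstrap is needed, because the two identities are triangular.
\begin{itemize}
\item The right side of $D\alpha=\bM^{-1}\Gamma_x$ contains no derivative of $\alpha$.
\item The H\"older seminorms of the compositions $\Gamma_x(\cdot,\alpha(\cdot))$ and $\bM(\cdot,\alpha(\cdot))$ need only the unscaled bound $[\alpha]_\beta\le C$, together with sup bounds on $\Gamma_{xa}$ and $D_a\bM$. The bound $[\alpha]_\beta\le C$ follows from $\lambda_0|\alpha(x)-\alpha(x')|\le|\Gamma(x,\alpha(x'))-\Gamma(x',\alpha(x'))|$.
\item Hence $\|D\alpha\|_{C_b^\beta}\le Ch^{-1/2}$ holds outright.
\item The second identity then gives $\|D^2\alpha\|_{C_b^\beta}\le Ch^{-1}$. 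It uses $\|\Gamma_{xx}\|_{C_b^\beta}\le Ch^{-1}$, $\|\Gamma_{xa}\|_{C_b^\beta}\le Ch^{-1/2}$ and $\|\Gamma_{aa}\|_{C_b^\beta}\le C$, with $\Gamma_{xxa},\Gamma_{xaa},\Gamma_{aaa}$ controlling the H\"older differences after substitution.
\end{itemize}
This is how the paper argues. The apparent circularity only arises if you bound these compositions with the scaled chain rule, which already presupposes $\|\alpha\|_{\beta,h}\le C$.

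\textbf{Lipschitz estimate.} Your version leaves a quadratic remainder in $\alpha[U]-\alpha[V]$, measured in $\|\cdot\|_{\beta,h}$. It would have to be absorbed, and that needs a tame product bound plus the sup-norm smallness from \cref{cor:tube-lipschitz}. You can avoid the remainder in either of two ways:
\begin{itemize}
\item Use the exact mean-value form $\alpha[U]-\alpha[V]=\bar\bM^{-1}(\Gamma[U]-\Gamma[V])(\cdot,\alpha[V])$, where $\bar\bM=\int_0^1\bM[U](\cdot,\alpha[V]+s(\alpha[U]-\alpha[V]))\dd s\ge\lambda_0I_m$ by convexity of $A$.
\item As the paper does, integrate $D_U\alpha[U]W=\bM^{-1}\bS W$ along $V+\theta(U-V)$.
\end{itemize}
Either way, the first bound in \eqref{eq:scaled-optimizer} gives $C\|U-V\|_H$ directly.
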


\begin{proof}
First we justify uniform curvature and interiority throughout the neighborhood.
For the reference data $\bar U=U(t+h)$, the consistency expansion gives
$\bM[\bar U](x,a)\ge(\lambda_H/2)I_m$ for all $a\in A$, and its maximizer $a_{\rm ref}(x)$ has distance at least $\eta/2$ from $\partial A$; see \eqref{eq:qhat-concavity} and the interiority argument in the proof of \cref{prop:consistency}. The argument in Appendix~\ref{app:consistency-proof} is independent of block stability.
By \cref{cor:tube-lipschitz}, if $\|U-\bar U\|_H\le\rho$ and $\rho$ is sufficiently small, then
$\bM[U](x,a)\ge\lambda_0I_m,\quad \lambda_0:=\lambda_H/4,$
uniformly in $x,a$ and small $h$.
Hence the one-step objective has a unique global maximizer $a_U(x)$ on the compact convex set $A$.
The variational inequality for this maximizer, strong concavity, and $\Gamma[\bar U](x,a_{\rm ref})=0$ give
\[
 \lambda_0|a_U-a_{\rm ref}|
 \le |\Gamma[U](x,a_{\rm ref})|
 \le C\|U-\bar U\|_H.
\]
This argument does not assume that $a_U$ is interior.
After decreasing $\rho$ so that $C\rho/\lambda_0\le\eta/4$, we obtain
$\operatorname{dist}(a_U(x),\partial A)\ge\eta/4$.
Thus both the inverse-Hessian bound and the ordinary first-order condition used below hold on one fixed neighborhood, independently of $h$.

The estimates in \cref{app:sensitivity} imply, for $1\le k\le6$,
\begin{equation}
\label{eq:scaled-transition}
 \|D_a^kP_{t,t+h}^a W\|_{\beta,h}\le Ch\|W\|_H,
 \qquad
 \|D_a^kP_{t,t+h}^a\|_{\mathcal L(C_b^s)}\le C,
 \quad s=\beta,1+\beta,2+\beta.
\end{equation}
These bounds also apply to families and to the recentered affine data.
For compositions with $G$, write $m=x+\widetilde m$ and use the bounded
derivatives of $(x,v)\mapsto G(x,x+v)$; no boundedness of $m$ itself is
needed.
Expand $\Gamma,\bM,\bR,\bS$ as in
\cref{subsec:one-step-objects}. Division by $h$ in each action-sensitive
factor is canceled by \eqref{eq:scaled-transition}. Product and composition
estimates therefore bound their scaled norms, and their first variations
in $U$ and $a$, uniformly; a variation $W$ contributes $C\|W\|_H$.
To see that all derivative orders are covered, the most demanding factor
is $h^{-1}D_a^3Q$: its three further action derivatives and two spatial
derivatives require at most six action derivatives of the transition
and eight derivatives of $G$. These are included in
\cref{ass:H2,ass:H4}.

The unscaled $C_b^\beta$ bound for $\alpha$ follows from uniform concavity
and the score bounds. The scaled bounds for the score and its action derivatives give
$\|\Gamma_x\|_{C_b^\beta}\le Ch^{-1/2}$,
$\|\Gamma_{xx}\|_{C_b^\beta}\le Ch^{-1}$,
$\|\Gamma_{xa}\|_{C_b^\beta}\le Ch^{-1/2}$, and
$\|\Gamma_{aa}\|_{C_b^\beta}\le C$, including evaluation at $a=\alpha(x)$.
Differentiating $\Gamma(x,\alpha(x))=0$ once and twice gives
\[
 D\alpha=\bM^{-1}\Gamma_x,\qquad
 D^2\alpha=\bM^{-1}
   \{\Gamma_{xx}+2\Gamma_{xa}D\alpha
                     +\Gamma_{aa}[D\alpha,D\alpha]\}.
\]
The first identity gives $\|D\alpha\|_{C_b^\beta}\le Ch^{-1/2}$.
Each term in braces in the second identity is then bounded by $Ch^{-1}$, so
$\|D^2\alpha\|_{C_b^\beta}\le Ch^{-1}$.
The weights $\sqrt h$ and $h$ compensate for precisely these possible losses and prove the first bound in \eqref{eq:scaled-optimizer}.
Inverses obey the same scaled product bounds since $\bM\ge\lambda_0I$.
For the second bound, integrate $D_U\alpha[U]W=\bM^{-1}\bS W$, whose scaled
norm is at most $C\|W\|_H$, along $V+\theta(U-V)$.

For the transition part of $D\cT_h$, use the exact, pointwise identity
\[
 (P^{\alpha[U]}-P^{\alpha[V]})W
 =\int_0^1
   (D_aP^{a_\theta}W)[\alpha[U]-\alpha[V]]\,\dd\theta,
 \quad a_\theta=\alpha[V]+\theta(\alpha[U]-\alpha[V]).
\]
By \eqref{eq:scaled-transition} and the scaled composition estimate,
$\|D_aP^{a_\theta}W\|_{\beta,h}\le Ch\|W\|_H$.
Together with \eqref{eq:scaled-optimizer}, the scaled product inequality therefore bounds the right side by $Ch\|U-V\|_H\|W\|_H$.
The identity is evaluated pointwise after each fixed-action transition; no state-dependent action is moved inside a transition operator.

For the feedback term, abbreviate by $\bR_U,\bM_U,\bS_U$ the quantities at continuation data $U$ and action $\alpha[U]$, and similarly for $V$.
The exact difference is
\begin{align*}
 h(\bR_U\bM_U^{-1}\bS_U-\bR_V\bM_V^{-1}\bS_V)W
 ={}&h(\bR_U-\bR_V)\bM_U^{-1}\bS_UW\\*
 &+h\bR_V(\bM_U^{-1}-\bM_V^{-1})\bS_UW\\*
 &+h\bR_V\bM_V^{-1}(\bS_U-\bS_V)W.
\end{align*}
By the first-variation estimates and \eqref{eq:scaled-optimizer}, each factor difference is bounded by $C\|U-V\|_H$ in the scaled norm, including the induced action change.
The product estimate therefore bounds each term by $Ch\|U-V\|_H\|W\|_H$.
Combining the transition and feedback contributions yields
\[
 \|(D\cT_h(U)-D\cT_h(V))W\|_{\beta,h}
 \le Ch\|U-V\|_H\|W\|_H.
\]
The $L$-component retains the factor $h$; dividing the weighted $H$-component by $h$ gives the other bound in \eqref{eq:two-norm-tube}.

Along the smooth reference solution, \cref{lem:S} with $r=2$ gives the
unscaled coefficient bounds instead. Differentiating the Duhamel
integrals and reference data in time at fixed step length retains the
factor $h$, and gives the asserted time modulus. Diagonal evaluation uses
the available two auxiliary derivatives throughout.
\end{proof}

\paragraph{A Gaussian comparison with the action still fixed}
For a constant action $v$, put
\[
 p(t,x,v,\xi)=i b(t,x,v)\cdot\xi
                  -\tfrac12\xi^\top c(t,x,v)\xi,
 \qquad \Pi_{t,h}^v=\operatorname{Op}(e^{hp(t,x,v,\xi)}).
\]
Here $\operatorname{Op}$ denotes left quantization: the coefficients are
evaluated at the starting point $x$. Equivalently, $\Pi_{t,h}^vu(x)$ is
the expectation of $u(x+hb(t,x,v)+\sqrt h\,\sigma(t,x,v)Z)$ for a standard
Gaussian vector $Z$ of the appropriate dimension. Both transitions hold the action fixed; the Gaussian comparison additionally freezes the coefficients at $(t,x)$. The grid game uses the exact transition.

We use the following elementary Gaussian bounds. If $q_j(x,\xi)$ is a
polynomial of degree at most $j$ with uniformly bounded sufficiently
smooth coefficients, then, for $0<\tau\le1$,
\begin{align}
\label{eq:gaussian-symbol-bounds}
 \|\operatorname{Op}(q_j e^{\tau p})u\|_H
       &\le C\tau^{-j/2}\|u\|_H,\notag\\
 \|\operatorname{Op}(q_j e^{\tau p})u\|_J
       &\le C(1+\tau^{-(j-1)/2})\|u\|_H,
       \qquad j\ge1.
\end{align}
Indeed, a Gaussian differentiated $j$ times has kernel $L^1$ norm
$O(\tau^{-j/2})$. Derivatives of its coefficient parameters add factors
$\tau$ and at most two Gaussian derivatives, so they obey the same
bound. Differentiate the output up to the required integer order and
take H\"older differences. For the second estimate, transfer one
Gaussian derivative to $u$ by integration by parts. This also proves the
Banach-valued and action-differentiated versions. The argument uses only
Gaussian moments and remains uniform under convex interpolation of the
diffusion matrices.

\begin{lemma}[Exact transition versus local Gaussian]
\label{lem:gaussian-comparison}
For $0\le k\le4$,
\begin{equation}
\label{eq:gaussian-comparison}
 \|D_a^k(P_{t,t+h}^a-\Pi_{t,h}^a)u\|_H
       \le C\sqrt h\|u\|_H,
 \qquad
 \|D_a^k(P_{t,t+h}^a-\Pi_{t,h}^a)u\|_J
       \le Ch\|u\|_H.
\end{equation}
For $k=0,1$, the same bounds hold after substituting any uniformly
$C_b^{2+\beta}$ action $a=a(x)$.
\end{lemma}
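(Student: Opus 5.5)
The plan is to compare the exact transition $P_{t,t+h}^a$ with the frozen Gaussian $\Pi_{t,h}^a$ through a Duhamel (parametrix) identity in which the action is held fixed. For fixed $v$, $w(s):=\Pi_{s,t+h-s}^{v}$ is not a semigroup, so I work with the backward Kolmogorov equation. Set $u^v(s)=P^v_{s,t+h}u$. The frozen Gaussian $\Pi_{t,h}^v u(x)$ is the solution at the starting point of the constant-coefficient problem with coefficients frozen at $(t,x)$. Writing $L_{s}^{v,(t,x)}$ for that frozen generator, variation of constants at each fixed $x$ gives
\[
 (P_{t,t+h}^v-\Pi_{t,h}^v)u(x)
 =\int_t^{t+h}\Bigl[e^{(s-t)L^{v,(t,x)}}\bigl(L_s^v-L^{v,(t,x)}\bigr)P^v_{s,t+h}u\Bigr](x)\,\dd s .
\]
The difference of generators has coefficients $b(s,z,v)-b(t,x,v)$ and $c(s,z,v)-c(t,x,v)$, which vanish to first order at $z=x$ and to order $|s-t|$ in time, by \cref{ass:H2}.

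The key step is the Gaussian moment gain. A coefficient vanishing like $|z-x|$ integrated against a Gaussian of variance $\tau=s-t$ gains $\tau^{1/2}$. The second-order operator acting on $P^v_{s,t+h}u$ is handled by moving derivatives onto the Gaussian kernel, which costs $\tau^{-j/2}$ as in \eqref{eq:gaussian-symbol-bounds}, or by keeping them on $u$ via $\|P_{s,t+h}^v u\|_{H}\le C\|u\|_H$ from \eqref{eq:parabolic-smoothing}. For the $J$-bound, I keep two derivatives on $u$ (order $2+\beta$ goes to $\beta$), gain $\tau^{1/2}$ from the vanishing coefficient, and transfer one Gaussian derivative, which costs $\tau^{-1/2}$. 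That leaves an $O(1)$ integrand, and integrating over an interval of length $h$ gives $Ch\|u\|_H$. For the $H$-bound there is one more output derivative, costing a further $\tau^{-1/2}$; since $\int_0^h\tau^{-1/2}\,\dd\tau\sim\sqrt h$, this gives $C\sqrt h\|u\|_H$. Output derivatives that hit the frozen base point $x$, through the left quantization, produce $x$-derivatives of the coefficients times $\tau$ and Gaussian derivatives, as the paragraph before the lemma notes; they obey the same counts. Hölder seminorms are obtained by the standard split into $|x-x'|\lessgtr\sqrt\tau$.

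For the action derivatives, $0\le k\le4$, I differentiate the identity in $v$. Derivatives fall on the frozen Gaussian, on the coefficient difference (which is still vanishing at $z=x$), or on $P^v_{s,t+h}u$. Action derivatives of the frozen Gaussian are symbols $q_j e^{\tau p}$, covered by \eqref{eq:gaussian-symbol-bounds}, with each $\partial_v$ adding a factor $\tau$ times a polynomial of degree at most two. Action derivatives of $P^v$ are controlled in $\mathcal L(C_b^s)$ by \eqref{eq:action-zero-order}. Because $k\le4$, at most six action derivatives of the coefficients are used, and the singular time powers remain integrable. All constants are uniform, and the Banach range $E$ passes through by the triangle inequality, as in \cref{cor:aux-param}.

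For $k=0,1$ with a state-dependent action $a(x)\in C_b^{2+\beta}$, the difference operator is evaluated pointwise at $v=a(x)$. Its $x$-derivatives split into the fixed-action estimate and chain-rule terms carrying $D_v$ of the fixed-action difference times derivatives of $a$. Those terms are covered by the cases $k\le 3$ of the fixed-action estimate, together with the product rule in $C_b^{2+\beta}$. I expect the main obstacle to be the $H$-norm estimate: keeping the full $2+\beta$ output regularity while showing that the loss is only $\tau^{-1/2}$ in the output derivative, not $\tau^{-1}$. This needs a careful allocation of derivatives between the kernel and $u$, and a Hölder-difference argument at the top order.
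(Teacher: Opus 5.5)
Your proof is correct, but it runs the parametrix in the opposite order from the paper.

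\textbf{The paper's route.} It interpolates with $F(s)=P_{t,s}^a\Pi^a_{s,t+h-s}$, so the exact transition sits on the left over $[t,s]$ and the frozen Gaussian sits on the right over the remaining time $t+h-s$. The error $\int_t^{t+h}P_{t,s}^a(\partial_s+L_s^a)\Pi^a_{s,t+h-s}\,\dd s$ has the explicit symbol \eqref{eq:gaussian-residual-symbol}. In it, coefficient variation enters only through $p_s,p_x,p_{xx}$: a term $\tau$ times a polynomial of degree at most three, and a term $\tau^2$ times one of degree at most four. The bounds \eqref{eq:gaussian-symbol-bounds} then give $C\tau^{-1/2}$ on $H$ and $C$ from $H$ to $J$ by pure degree counting. $P_{t,s}^a$ and its action derivatives are used only as bounded operators, and action derivatives of the Gaussian just add factors $\tau p_a$.

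\textbf{Your route.} You apply the classical Levi frozen-coefficient identity at each $x$. The constant-coefficient semigroup $e^{(s-t)L^{v,(t,x)}}$ sits on the left and the exact $P^v_{s,t+h}$ on the right. The $\tau^{1/2}$ gain comes from a first Gaussian moment of $c(s,z,v)-c(t,x,v)$ (and the analogous $b$-difference), which vanishes at $z=x$ up to $O(s-t)$.

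\textbf{Comparison.} Both routes produce the same integrands, $O(1)$ into $J$ and $O(\tau^{-1/2})$ into $H$. Yours makes the origin of the $\sqrt h$ transparent, but you must do the top-order H\"older estimate by hand instead of quoting \eqref{eq:gaussian-symbol-bounds}. The paper's symbolic form also matches the left-quantized reference model \eqref{eq:reference-gaussian-symbol} used afterwards.

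\textbf{The obstacle you anticipate.} It is routine. Write $z=x+w$ and move the $x$-derivatives that land on $D^2P^v_{s,t+h}u\in C_b^\beta$ onto the kernel by integration by parts in $w$. Each move costs $\tau^{-1/2}$, against the moment gain $\tau^{1/2}$. The top-order H\"older seminorm then follows from that of $D^2P^v_{s,t+h}u$ together with the Lipschitz dependence of the resulting kernel on $x$ through the frozen parameters. This needs no splitting at scale $\sqrt\tau$.

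\textbf{A minor count.} For a substituted action with $k=1$, the H\"older differences of the top-order chain-rule terms need the fixed-action bound for $D_a^{k+3}=D_a^4$, not only the cases $k\le3$. Your fixed-action estimate through $k=4$ already covers this.
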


\begin{proof}
With $\tau=t+h-s$, set $F(s)=P_{t,s}^a\Pi_{s,\tau}^a$.
Its endpoint values are $F(t)=\Pi_{t,h}^a$ and $F(t+h)=P_{t,t+h}^a$, since a transition over zero time is the identity.
Integrating $F'(s)$ therefore gives the exact identity
\[
 P_{t,t+h}^a-\Pi_{t,h}^a
 =\int_t^{t+h}P_{t,s}^a
                (\partial_s+L_s^a)\Pi_{s,t+h-s}^a\,\dd s.
\]
The residual $(\partial_s+L_s^a)\Pi_{s,t+h-s}^a$ measures the failure of the Gaussian comparison to satisfy the exact backward evolution equation.
Writing $p=p(s,x,a,\xi)$, its symbol is
\begin{equation}
\label{eq:gaussian-residual-symbol}
 e^{\tau p}\left[
 \tau\left(p_s+b\cdot p_x+i(c\xi)\cdot p_x
                         +\tfrac12 \operatorname{tr}(cp_{xx})\right)
          +\tfrac12\tau^2 \operatorname{tr}\!\left(c(p_x\otimes p_x)\right)\right].
\end{equation}
The term of order $\tau$ has polynomial degree at most three, and the
term of order $\tau^2$ degree at most four. Thus its $H\to H$ norm is
at most $C\tau^{-1/2}$ and its $H\to J$ norm at most $C$, by
\eqref{eq:gaussian-symbol-bounds}. Action differentiation preserves these
estimates: differentiating the exponential adds $\tau p_a$, of degree
at most two. Differentiating the integral also produces action
derivatives of $P_{t,s}^a$, bounded on $H$ and $J$ by
\eqref{eq:scaled-transition}.
Thus the strong-norm error is bounded by $C\int_0^h\tau^{-1/2}\,\dd\tau=2C\sqrt h$, whereas the weaker-norm error is bounded by $C\int_0^h1\,\dd\tau=Ch$.
This proves \eqref{eq:gaussian-comparison} for the fixed parameter.
For $k=0,1$, the chain rule and H\"older differences after substitution of any uniformly $C_b^{2+\beta}$ action use action derivatives only through order $k+3\le4$, proving the final statement.
In particular, the action is never moved inside a spatial semigroup.
\end{proof}

\paragraph{Reference linearization and its frozen model}
Let $a_i=\alpha_{i,h}[\bar U_{i+1}]$ and
$H_i=\bR_i\bM_i^{-1}$. Set
$p_i(x,\xi)=p(t_i,x,a_i(x),\xi),\quad q_i(x,\xi)=D_ap(t_i,x,a_i(x),\xi).$
For $\zeta=(\zeta_f,\zeta_g)$ in the fixed auxiliary range $E$, define
$w_i(x)\zeta=\zeta_f(x)+\gamma_i(x)\cdot\zeta_g$.
In the symbol, this bounded functional acts on the input's spatial Fourier
coefficient. Define
\begin{equation}
\label{eq:reference-gaussian-symbol}
 \mathcal E_{i,h}^{\rm G}
 =\operatorname{Op}\left(e^{hp_i(x,\xi)}
                   [I+hH_i(x)q_i(x,\xi)w_i(x)]\right).
\end{equation}
The coefficients $H_i,\gamma_i$ in this expression are the \emph{exact}
reference coefficients; they are not replaced by Gaussian approximations.
Since $D_a\Pi=h\operatorname{Op}(p_ae^{hp})$, the formula for $\bS$ and
\cref{lem:gaussian-comparison} show that
\begin{equation}
\label{eq:reference-gaussian-difference}
 D\cT_{i,h}(\bar U_{i+1})-\mathcal E_{i,h}^{\rm G}
                 =K_{i,h}^{(1)}+K_{i,h}^{(0)}
\end{equation}
with the bounds in \eqref{eq:two-norm-classes} and uniform $b_1,b_0$.
The $D_{22}^2G$ term is $K^{(0)}$: it has the form $h$ times a uniformly
$C_b^{2+\beta}$ coefficient times $P^{a_i}u_g$. All other differences use
\eqref{eq:gaussian-comparison}, including its first action derivative.

Fix a grid time $t_*$ in the block and a center $x_*$.
To apply the constant-coefficient estimates of \cref{lem:micro-duhamel}, freeze
$p,H,q,w$ in \eqref{eq:reference-gaussian-symbol} at $(t_*,x_*)$.
The resulting step is $\mathscr E_{*,h}=P_{*,h}(I+h\bN_{*,h})$.
The exact discrete first-order condition gives $w_*\bR_*=0$, hence
$\bN_{*,h}^2=0$ as in \cref{prop:nilpotent}.
No nilpotence is asserted for the intermediate symbols or for path averages.

\paragraph{Two-norm bounds for spatial freezing}
Let $\chi_*\in C_c^\infty(B_{2r}(x_*))$ equal one on $B_r(x_*)$.
Multiplication by $\chi_*$ localizes the starting point only; the transition still uses the full input function and may leave the ball.
The reference coefficients are uniformly $C_b^{2+\beta}$, and their
oscillation on the block and this ball is at most
$C(r^\beta+\Delta^{\beta/2})$. For the moving diagonal this follows from
\[
 \|E_x-E_{x_*}\|_{\mathcal L(C_b^{2+\beta}(\R_y^d),\R)}\le |x-x_*|,
 \qquad E_xv=v(x).
\]
In fact $x\mapsto E_x$ is $C_b^{2+\beta}$ in this operator norm.

Expand the difference of the two symbols in
\eqref{eq:reference-gaussian-symbol}, using
\[
 e^{hp_i}-e^{hp_*}
   =h(p_i-p_*)\int_0^1e^{h[(1-\theta)p_*+\theta p_i]}\,\dd\theta.
\]
Each resulting term, including parameter integrals, consists of a
coefficient difference $C_i(x)-C_*$ and a Gaussian multiplier $J_{i,h}$.
The latter is a finite sum of symbols $h$ times a polynomial of degree
one or two, and $h^2$ times a polynomial of degree at most four, all
times an interpolated Gaussian. Uniformly,
\begin{align}
\label{eq:gaussian-J-bounds}
 \|J_{i,h}u\|_H&\le C\|u\|_H,&
 \|J_{i,h}u\|_L&\le Ch\|u\|_H,\notag\\
 \|J_{i,h}u\|_0&\le C\|u\|_0,&
 \|J_{i,h}u\|_0&\le Ch\|u\|_{C_b^2}.
\end{align}
These estimates follow from the same Gaussian kernel argument, transferring
two derivatives to $u$ for the last bound and for $H\to L$.
Bounded coefficient factors can be retained in the multipliers; for
operator-valued differences the product estimate is read as an operator
acting on the Gaussian-smoothed vector.

The tame product inequality
\[
 \|Cv\|_{C_b^s}\le C_s(\|C\|_\infty\|v\|_{C_b^s}
                                  +\|C\|_{C_b^s}\|v\|_\infty)
\]
and \eqref{eq:gaussian-J-bounds} give, with
$\delta=r^\beta+\Delta^{\beta/2}$ and
$K_{i,h}=\chi_*(\mathcal E_{i,h}^{\rm G}-\mathscr E_{*,h})$,
\[
 \|K_{i,h}u\|_H\le C\delta\|u\|_H+C_r\|u\|_0,
 \qquad
 \|K_{i,h}u\|_L\le Ch\delta\|u\|_H+C_rh\|u\|_{C_b^2}.
\]
Here the small coefficient oscillation multiplies the strong input norm, while coefficient and cutoff derivatives enter the terms with weaker input norms.
Interpolation,
$\|u\|_{C_b^2}\le\varepsilon'\|u\|_H+C_{\varepsilon'}\|u\|_0$,
then yields
\begin{equation}
\label{eq:freezing-two-norm}
 \|K_{i,h}u\|_H\le b\|u\|_H+d\|u\|_0,
 \qquad
 \|K_{i,h}u\|_L\le h(b\|u\|_H+d\|u\|_0),
\end{equation}
where $b\le C(r^\beta+\Delta^{\beta/2})+\varepsilon$ and
$d\le C_{r,\varepsilon}$.
First fix $r$, then choose $\varepsilon'$ so that $C_r\varepsilon'\le\varepsilon$.
Thus large cutoff derivatives are absorbed into the coefficient of $\|u\|_0$, while the coefficient of $\|u\|_H$ remains small.

The commutator $\chi_*\mathscr E_{*,h}-\mathscr E_{*,h}\chi_*$ belongs to the first-order
class in \eqref{eq:two-norm-classes}, with constant $C_r$.
For completeness, its kernel is
$[\chi_*(x)-\chi_*(x-z)]k_h(z)$, where $k_h$ is the Gaussian kernel of
$P_{*,h}(I+h\bN_{*,h})$. Its absolute first moment is $O(\sqrt h)$,
which gives the $H\to H$ bound. To obtain the $H\to J$ bound, expand
the bracket as $D\chi_*(x)z+O_r(|z|^2)$. The second moment is $O(h)$.
The Fourier multiplier of $zk_h$ is the derivative in $\xi$ of
$e^{hp_*}(I+hn_*)$: it is a sum of $h$ times polynomials of degree at
most one and $h^2$ times polynomials of degree at most three, times
the Gaussian. Transferring one derivative to $u$ gives $Ch\|u\|_H$.
The Taylor remainder and its derivatives through order $1+\beta$ have
the same second-moment bound. This calculation retains the entire
factor $I+h\bN_{*,h}$.

\paragraph{Assembly and block closure}
For $W_i=A_{i,h}W_{i+1}$ and $Z_i^*=\chi_*W_i$, the exact recursion is
\[
 Z_i^*=\mathscr E_{*,h}Z_{i+1}^*+\mathcal R_{i,h}^*W_{i+1},
 \quad \mathcal R_{i,h}^*=
 \chi_*(A_{i,h}-\mathscr E_{*,h})+[\chi_*,\mathscr E_{*,h}].
\]
The remainder acts on the global input $W_{i+1}$.
The neighborhood and freezing errors in \cref{lem:scaled-tube} and \eqref{eq:freezing-two-norm} belong to $K^{(2)}$, the Gaussian comparison and cutoff commutator to $K^{(1)}$, and the $D_{22}^2G$ term to $K^{(0)}$; see \eqref{eq:reference-gaussian-difference}.
Cutoff multiplication and path averaging preserve the bounds, giving
\begin{equation}
\label{eq:localized-two-norm-sizes}
 b_2\le C(r^\beta+\Delta^{\beta/2}+\varepsilon)+C_r\rho,
 \qquad b_1+b_0\le C_r,\qquad d_2\le C_{r,\varepsilon}.
\end{equation}

Set $X=\max_i\|W_i\|_H$, $Y=\max_i\|W_i\|_0$, and $D=\|W_M\|_H$.
Repeated substitution in the localized recursion gives
\[
 Z_i^*=\mathscr E_{*,h}^{M-i}(\chi_*W_M)
       +\sum_{\ell=i}^{M-1}\mathscr E_{*,h}^{\ell-i}
                  \mathcal R_{\ell,h}^*W_{\ell+1}.
\]
Apply \cref{lem:micro-duhamel} to the global input sequence $W$: the three remainder classes contribute $C(b_2X+d_2Y)$, $Cb_1\sqrt\Delta X$, and $Cb_0\Delta X$, respectively.
The terminal contribution is at most $C_rD$ by boundedness of the frozen powers.
Using \eqref{eq:localized-two-norm-sizes}, we obtain
\[
 \max_i\|\chi_*W_i\|_H
 \le C_rD+
 [C(r^\beta+\Delta^{\beta/2}+\varepsilon)+C_r(\rho+\sqrt\Delta+\Delta)]X
 +C_{r,\varepsilon}Y.
\]
To recover the global norm, take the supremum over localization centers.
Pairs at distance less than $r$ lie in one inner localization ball; the contribution from more distant pairs is bounded by $Cr^{-\beta}\|D^2W\|_\infty$.
Interpolation bounds this last expression by $\theta\|W\|_H+C_{r,\theta}\|W\|_0$.
Fixing a small $\theta$ and absorbing its term yields
\[
 \|W\|_H\le C\sup_{x_*}\|\chi_*W\|_H+C_r\|W\|_0,
\]
where $C$ is independent of small $r$.
Consequently the local estimate gives the same global bound, with a uniform factor on its principal small terms and an additional $C_rY$.

Finally, the exact transition is a contraction in $Y_0$, and
$\|\bR\bM^{-1}\bS W\|_0\le C\|W\|_H$.
The exact linearization therefore gives, also after path averaging,
\[
 \|W_i\|_0\le\|W_{i+1}\|_0+Ch\|W_{i+1}\|_H.
\]
Summing over the block and using $Mh\le\Delta$ gives directly
\[
 Y\le\|W_M\|_0+Ch\sum_{\ell=0}^{M-1}\|W_{\ell+1}\|_H
   \le D+C\Delta X.
\]
Substitution into the global strong-norm estimate yields
\[
 X\le C_{r,\varepsilon}D+
 [C(r^\beta+\varepsilon+\Delta^{\beta/2})
       +C_r(\rho+\sqrt\Delta+\Delta)+C_{r,\varepsilon}\Delta]X.
\]
Choose $\varepsilon$ and $r$ small, then $\rho$ small depending on $r$,
and finally $\Delta_0$ small after all radius- and interpolation-dependent constants are fixed.
The bracket is at most $1/2$, independently of $h$.
Moving its contribution to the left gives $X\le2C_{r,\varepsilon}D$.
Since the terminal perturbation $W_M\in H$ is arbitrary and $W_i=A_{i,h}\cdots A_{M-1,h}W_M$, taking the supremum over $\|W_M\|_H=1$ gives the product-operator bound with $C_B=2C_{r,\varepsilon}$.
The same argument applies uniformly to every subblock of duration at most $\Delta_0$, proving \cref{prop:tube-stability}.

\section{Proof of one-step consistency}
\label{app:consistency-proof}

In this proof, $C_x^{2+\beta}C_a^4$ denotes uniform $C_b^{2+\beta}$ bounds in the state variable for the action derivatives through order four. For the continuation family $f$, the fixed $C_b^{2+\beta}$ regularity in the reference parameter $y$ is retained, as in $\cMix^{2+\beta,2+\beta}$.

\begin{proof}[Proof of \cref{prop:consistency}]
\smallskip\noindent\textit{Step 1: weak one-step expansion.}\enspace
Let $\phi=\phi(s,x)$ be sufficiently smooth and define
$\mathscr P_h^a\phi(t,x) :=P_{t,t+h}^a[\phi(t+h,\cdot)](x).$
The time-dependent Dynkin formula accounts for both the time change of $\phi$ and the state evolution, and gives
\begin{equation}
\label{eq:weak-first}
  \mathscr P_h^a\phi(t)-\phi(t)
  =\int_0^hP_{t,t+s}^a[\mathcal A_{t+s}^a\phi(t+s)]\,\dd s.
\end{equation}
Set $\psi^a(s):=\mathcal A_s^a\phi(s)$.
Applying the same identity to this integrand gives
\[
 P_{t,t+s}^a\psi^a(t+s)
 =\psi^a(t)+\int_0^sP_{t,t+r}^a
       [(\partial_r+L_{t+r}^a)\psi^a(t+r)]\,\dd r.
\]
Substitution into \eqref{eq:weak-first} yields
\begin{equation}
\label{eq:weak-expansion}
  P_{t,t+h}^a\phi(t+h)
  =\phi(t)+h\mathcal A_t^a\phi(t)
   +h^2\Rem_h^a[\phi](t),
\end{equation}
where
\begin{equation*}
  \Rem_h^a[\phi](t)
  :=\frac1{h^2}\int_0^h\int_0^s
  P_{t,t+r}^a[(\partial_r+L_{t+r}^a)\psi^a(t+r)]\,\dd r\,\dd s.
\end{equation*}
The factor $h^2$ corresponds to the area $h^2/2$ of the time triangle $0<r<s<h$.
The integrand is $(\partial_r+L_{t+r}^a)^2\phi$; since each generator has order two, it uses at most four state derivatives of $\phi$.
Bounding it in $C_b^{2+\beta}$ therefore requires the $6+\beta$ state regularity in \cref{ass:H5}.
Action derivatives act on the coefficients and transition operators without increasing this state-derivative requirement.
Differentiating the integral up to four times in the action and applying \eqref{eq:action-zero-order} therefore gives, under \cref{ass:H2,ass:H4,ass:H5},
$\sup_{a\in A} \|\Rem_h^a[\phi](t)\|_{C_x^{2+\beta}C_a^4} \le C.$

The affine function $\iota(x)=x$ is treated separately.  Since $L_s^a\iota=b(s,\cdot,a)$, Dynkin's formula and one further application of \eqref{eq:weak-expansion} give
\begin{equation}
\label{eq:id-weak-expansion}
  P_{t,t+h}^a\iota
  =\iota+hb(t,\cdot,a)+O_{C_x^{2+\beta}C_a^4}(h^2).
\end{equation}
Combining \eqref{eq:weak-expansion} for $\tgb$ with \eqref{eq:id-weak-expansion} yields the same weak expansion for $g=\Id+\tgb$.

\smallskip\noindent\textit{Step 2: expansion of the one-step objective.}\enspace
Apply \eqref{eq:weak-expansion} to $f^x$ and $g$, and Taylor-expand
$G$ in its second argument. With remainders uniformly bounded in
$C_x^{2+\beta}C_a^4$, this gives
\begin{equation}
\label{eq:qhat-expansion}
 \widehat q_{t,h}(x,a):=\frac{\overline Q_h(t,x,a)-V(t,x)}h
       =\cH(t,x,a)+h\Rem_{Q,h}(t,x,a).
\end{equation}
The two spatial and four action derivatives of the Taylor remainder
require at most the eight derivatives of $G$ in \cref{ass:H4}.

\smallskip\noindent\textit{Step 3: persistence of strong concavity and the interior margin.}\enspace
By \cref{ass:H7} and \eqref{eq:qhat-expansion}, for all sufficiently small $h$,
\begin{equation}
\label{eq:qhat-concavity}
  -D_{aa}^2\widehat q_{t,h}(x,a)
  \ge\frac{\lambda_H}{2}I_m
\end{equation}
uniformly in $(t,x,a)$.  Thus $a\mapsto\overline Q_h(t,x,a)$ is globally strongly concave on the convex compact set $A$ and has a unique global maximizer.

Write $\widehat a=\whalpha(t,x)$. Strong concavity of $\cH$ and optimality of $a_h^\star$ for $\widehat q_{t,h}=\cH+h\Rem_{Q,h}$ give
\[
 \frac{\lambda_H}{2}|a_h^\star-\widehat a|^2
 \le\cH(t,x,\widehat a)-\cH(t,x,a_h^\star)
 \le2h\|\Rem_{Q,h}\|_\infty.
\]
Thus $|a_h^\star-\widehat a|\le C\sqrt h$, without assuming interiority of $a_h^\star$.
This preliminary estimate and the margin in \cref{ass:H8} imply, after decreasing $h_0$ so that $C\sqrt h<\eta/2$,
$\operatorname{dist}(a_h^\star(t,x),\partial A)\ge\frac\eta2.$
Hence the first-order condition $D_a\widehat q_{t,h}(x,a_h^\star(t,x))=0$ is valid.

\smallskip\noindent\textit{Step 4: optimizer consistency.}\enspace
To obtain the $C_b^{2+\beta}$ estimate, connect the two objectives by
\[
 q_\theta=(1-\theta)\cH+\theta\widehat q_{t,h}
          =\cH+\theta h\Rem_{Q,h},\qquad 0\le\theta\le1,
\]
and let $a_\theta(x)$ be its maximizer.
Then $a_0=\whalpha$ and $a_1=a_h^\star$; we estimate their difference by integrating $\partial_\theta a_\theta$.
Step~3 applies uniformly in $\theta$, so these maximizers are interior and their negative Hessians are bounded below by $\lambda_H/2$.

The link to \cref{app:sensitivity} is the normalized bound of \cref{lem:S} with $r=2$:
\[
 \frac1h\|D_a^jP_{t,t+h}^a\varphi\|_{C_b^{2+\beta}}
 \le C\|\varphi\|_{C_b^{4+\beta}},\qquad 1\le j\le5.
\]
The reference data in \cref{ass:H5} have this regularity; use \cref{cor:aux-param} for the mixed norms of $f$ and \cref{cor:affine-data} for the identity part of $g$.
For $j\ge1$, $D_a^j\widehat q_{t,h}=h^{-1}D_a^j\overline Q_h$, since $V$ is independent of $a$.
Each differentiated composition with $G$ contains at least one action-sensitive transition factor of size $O(h)$ in $C_b^{2+\beta}$, canceling this normalization.
Thus $D_a^j\widehat q_{t,h}$, and hence $D_a^jq_\theta$, are uniformly bounded in $C_x^{2+\beta}$ for $1\le j\le5$.
These higher-order bounds are obtained directly from the transition estimates, independently of the four-derivative remainder estimate in Step~2.

Differentiating $D_aq_\theta(x,a_\theta(x))=0$ twice in $x$ and taking H\"older differences gives uniform $C_b^{2+\beta}$ bounds for $a_\theta$ and
$M_\theta(x):=-D_{aa}^2q_\theta(x,a_\theta(x))$.
For the substituted Hessian, two action derivatives are already present, two more may arise from the spatial chain rule, and one further derivative controls the H\"older differences; this explains the bound through order five.
Uniform positive definiteness gives the same spatial regularity for $M_\theta^{-1}$.
Differentiating in $\theta$ yields
\[
 \partial_\theta a_\theta
   =hM_\theta^{-1}D_a\Rem_{Q,h}(x,a_\theta(x)).
\]
For the substituted remainder gradient, the analogous count is $1+2+1=4$ action derivatives, supplied by Step~2.
The product and composition estimates therefore give
\[
 \|a_h^\star-\whalpha\|_{C_b^{2+\beta}}
 \le\int_0^1\|\partial_\theta a_\theta\|_{C_b^{2+\beta}}\,\dd\theta
 \le Ch,
\]
proving \eqref{eq:a-star-rate} without additional regularity assumptions.

\smallskip\noindent\textit{Step 5: continuation residual.}\enspace
For $\phi=f(\cdot,\cdot;y)$ or $g$, the weak expansion,
continuation equation and \eqref{eq:a-star-rate} give
\[
 P_{t,t+h}^{a_h^\star}\phi(t+h)-\phi(t)
 =h(L_t^{a_h^\star}-L_t^{\whalpha})\phi(t)+O(h^2)=O(h^2).
\]
For $f$, this holds in $\cMix^{2+\beta,2+\beta}$ uniformly in the
frozen parameter. For $g=\Id+\tgb$, use \eqref{eq:id-weak-expansion}
and $Dg=I+D\tgb$, $D^2g=D^2\tgb$ to bound the residual in
$C_b^{2+\beta}$.
In both component norms, coefficient smoothness and \eqref{eq:a-star-rate} bound the generator difference by $Ch$; its prefactor $h$ therefore gives $O(h^2)$, as does the weak-expansion remainder.
The identity part cancels in the recentered second component.
Thus the definition of $\cT_{t,h}$ and the sum norm on $\cX^{2+\beta}$ give
\[
 \begin{aligned}
 \|\cT_{t,h}(U(t+h))-U(t)\|_{\cX^{2+\beta}}
 ={}&\|P_{t,t+h}^{a_h^\star}f(t+h)-f(t)\|_{\cMix^{2+\beta,2+\beta}}\\
 &+\|P_{t,t+h}^{a_h^\star}g(t+h)-g(t)\|_{C_b^{2+\beta}}
 \le Ch^2.
 \end{aligned}
\]
This is precisely the local residual estimate \eqref{eq:consistency-residual}.
\end{proof}

\section*{Declaration of AI use}
OpenAI ChatGPT was used for proof exploration and checking, literature
searches, and drafting and revising text and \LaTeX.
The author has independently verified all mathematical arguments and read all cited works.
The author assumes responsibility for all content.

\end{document}